\documentclass{amsart}    
\usepackage{amsmath, amsthm, amssymb}
\usepackage[english]{babel}
\usepackage[T1]{fontenc}
\usepackage[utf8]{inputenc}
\usepackage{mathrsfs}
\usepackage{pgf}
\usepackage{pinlabel}

\title[Ray graph and quasimorphisms on a big mapping class group]{Gromov-hyperbolicity of the ray graph and quasimorphisms on a big mapping class group}

\author{Juliette Bavard}
\address{Univ Rennes, CNRS, IRMAR - UMR 6625, F-35000 Rennes, France}
\email{juliette.bavard@univ-rennes1.fr}

\newtheorem{theo}{Theorem}[section]
\newtheorem*{theo*}{Theorem}
\newtheorem*{prop*}{Proposition}
\newtheorem{prop}[theo]{Proposition}
\newtheorem{lemma}[theo]{Lemma}
\newtheorem{corollary}[theo]{Corollary}

\newtheorem{claim}[theo]{Claim}

\theoremstyle{definition}
\newtheorem*{definition}{Definition}
\newtheorem*{remark}{Remark}

\newcommand{\Sph}{\mathbb{S}}

\newcommand{\E}{\mathcal{E}}
\newcommand{\Cc}{\mathcal{C}}

\newcommand{\N}{\mathbb{N}}
\newcommand{\R}{\mathbb{R}}
\newcommand{\Z}{\mathbb{Z}}

\DeclareMathOperator{\Homeo}{Homeo}

\DeclareMathOperator{\MCG}{MCG}
\DeclareMathOperator{\Cantor}{Cantor}
\DeclareMathOperator{\scl}{scl}

\begin{document}

\maketitle

The goal of these notes is to translate\footnote{Any improvement/comment on this translation is welcome at juliette.bavard@univ-rennes1.fr.} \cite{Juliette}, which was written in French in September $2014$. 

\begin{abstract}   
The mapping class group $\Gamma$ of the complement of a Cantor set in the plane arises naturally in dynamics. We show that the ray graph, which is the analog of the complex of curves for this surface of infinite type, has infinite diameter and is hyperbolic. We use the action of $\Gamma$ on this graph to find an explicit non trivial quasimorphism on $\Gamma$ and to show that this group has infinite dimensional second bounded cohomology. Finally we give an example of a hyperbolic element of $\Gamma$ with vanishing stable commutator length. This carries out a program proposed by Danny Calegari.
 \end{abstract}

\section{Introduction}
\subsection{Big mapping class group and dynamics}

Let $S$ be a connected, orientable surface, which is not necessarily assumed to have finite topological type (i.e. $S$ can be a compact surface with infinitely many punctures, or a surface with infinite genus, etc). The \emph{mapping class group of $S$}, that we will denote by $\MCG(S)$, is the group of preserving orientation homeomorphisms of $S$ up to isotopy. If we know many characteristics of mapping class groups of finite type surfaces, those of infinite type surfaces have been less studied. However, as Danny Calegari explained in his blog "Big mapping class groups and dynamics"~\cite{blog-Calegari}, those "big" mapping class groups appear naturally in dynamical problems, in particular through the following construction (see \cite{blog-Calegari}).

Let us denote by $\Homeo^+(\R^2)$ the group of homeomorphisms of the plane which preserve the orientation. Let $G$ be a subgroup of $\Homeo^+(\R^2)$. If the orbit $G\cdot p$ of a point $p \in \R^2$ is bounded, then there exists a morphism from $G$ to $\MCG(\R^2- K)$, where $K$ is either a finite set, or a Cantor set.
Indeed, the union $\tilde K$ of the closure of the orbit $G \cdot p$ with the bounded connected components of its complement is a compact set, invariant by $G$, and whose complement is connected. The group $G$ acts on the quotient of the plane that we get by collapsing each of the connected components of $\tilde K$. This quotient is homeomorphic to the plane (by a theorem of Moore). The image of $\tilde K$ in the quotient is a subset $K$ of the plane, which is totally disconnected. Up to replace $K$ by one of its subset, we can assume that $K$ is minimal, i.e. every orbit $G \cdot q$ with $q\in K$ is dense in $K$. Because $K$ is compact, it is either a finite set, or a Cantor set. This construction gives us a morphism from $G$ to $\MCG(\R^2-K)$.

The mapping class group of $\R^2 $ minus finitely many points has a finite index subgroup isomorphic to a braid group quotiented by its center, and thus has been well studied. In this paper, we will focus on the other case, where $K$ is a Cantor set. We will denote: 
$$\Gamma:=\MCG(\R^2-\Cantor).$$ 

In \cite{Calegari-Circular}, Calegari proves that there exists an injective morphism from $\Gamma$ to $\Homeo^+(\Sph^1)$. In particular, this is the first step to show that a subgroup of diffeomorphisms of the plane which preserves orientation and which has a bounded orbit is circularly orderable. To establish more properties of the group $\Gamma$, we carry here out a program proposed by Calegari in \cite{blog-Calegari}.

\subsection{The ray graph}
A central object in the study of mapping class groups of finite type surfaces is the \textit{curve complex}. This is a simplicial complex associated to each surface, whose simplexes are the sets of isotopy classes of essential simple closed curves on the surface which have disjoint representatives. The Gromov-hyperbolicity of this complex, established by Howard Masur and Yair Minsky (see \cite{Masur-Minsky}), is a strong tool to study these groups. In the case of the group $\Gamma$ that we care about, the curve complex of the plane minus a Cantor set is not very interesting from a large scale point of view: it has, indeed, diameter $2$. Danny Calegari suggested to replace this complex by the \emph{ray graph}, and defined it in the following way (see Figure \ref{figu:graphe-rayons} for examples of rays):

\begin{definition} [Calegari \cite{blog-Calegari}]
The \emph{ray graph} is the graph whose vertex set is the set of isotopy classes of proper rays, with interior in the complement of the Cantor set $K$, from a point in $K$ to infinity, and whose edges (of length $1$) are the pairs of such rays that can be realized disjointly.
\end{definition}

\begin{figure}[!h]
\labellist
\pinlabel $\alpha$ at 95 190
\pinlabel $\beta$ at 148 190
\pinlabel $\gamma$ at 202 190
\pinlabel $\alpha$ at 392 137
\pinlabel $\gamma$ at 458 136
\pinlabel $\beta$ at 526 134
\pinlabel $\infty$ at 162 270
\endlabellist
\centering
\includegraphics[scale=0.40]{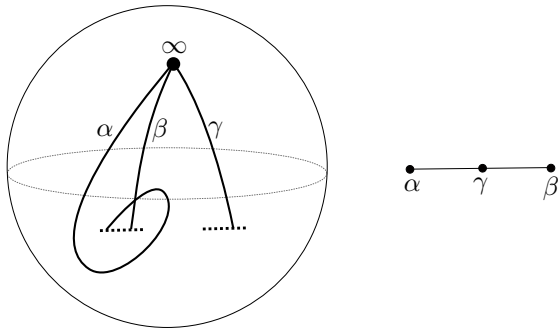}
\caption{Example of three rays on the sphere, and the associated subgraph of the ray graph: $d(\alpha,\beta)=2$ and $d(\alpha,\gamma)=d(\beta,\gamma)=1$.}
\label{figu:graphe-rayons}
\end{figure}

We prove here the following results:

\begin{theo*}[\ref{theo-infini}]
The ray graph has infinite diameter.
\end{theo*}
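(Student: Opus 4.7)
The plan is to find a mapping class $\varphi \in \Gamma$ and a ray $\alpha_0$ whose orbit $\{\varphi^n \alpha_0\}_{n \geq 0}$ has unbounded diameter in the ray graph. To certify that the distances grow, I will build a coarsely Lipschitz function $f$ from the vertex set of the ray graph to $\N$ whose values along the orbit tend to infinity, which forces $d(\alpha_0, \varphi^n \alpha_0) \to \infty$.

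First I would choose a simple closed curve $c \subset \R^2 \setminus K$ whose bounded complementary disk $D$ meets $K$ in a Cantor subset $K_1$, so that the pair $(D, K_1)$ is a disk with Cantor-set punctures, and take $\varphi \in \Gamma$ to be a homeomorphism supported in $D$ with rich topological dynamics on $(D, K_1)$ --- say a partial pseudo-Anosov obtained by restricting a pseudo-Anosov on a finitely-punctured sub-disk, or a homeomorphism built from an infinite-order self-symmetry of the standard middle-thirds Cantor set. Next, I would fix a reference arc $\beta_0 \subset D$ joining a point of $K_1$ to $c$. For a ray $\alpha$, put it in minimal position with $c$ and define $f(\alpha)$ to be the geometric intersection number of the arc system $\alpha \cap D$ with $\beta_0$. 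If $\alpha$ and $\alpha'$ can be realized disjointly, then their arcs in $D$ are disjoint and admit simultaneous minimal-position representatives with respect to $\beta_0$, and a counting argument should yield $|f(\alpha) - f(\alpha')| \leq C$ for some universal $C$. The growth $f(\varphi^n \alpha_0) \to \infty$ would follow from the dynamics of $\varphi$: iterating $\varphi^{-1}$ stretches $\beta_0$ across $D$, so that $\alpha_0$ meets $\varphi^{-n} \beta_0$ many times, which by equivariance is the same as $f(\varphi^n \alpha_0)$ being large.

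The main obstacle I foresee is the coarse Lipschitz property of $f$. Because $(D, K_1)$ has infinitely many punctures, a single ray's arc system in $D$ can already be quite complex, and disjointness of two such systems does not automatically bound $|f(\alpha) - f(\alpha')|$ by a universal constant; a careful combinatorial argument, perhaps reducing to a finitely-punctured sub-disk after truncation, is needed. A secondary challenge is exhibiting a concrete $\varphi$ whose dynamics on $(D, K_1)$ one can analyze effectively, which may require approximating $(D, K_1)$ by finitely-punctured subdisks or appealing to the circular order on the ends of $\R^2 \setminus K$ constructed in \cite{Calegari-Circular}.
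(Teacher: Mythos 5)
Your high-level strategy --- certify growth of distances by a coarsely Lipschitz function from $X_r$ to $\N$ that blows up along a sequence of rays --- is exactly the skeleton of the paper's proof (its function $A$ is $1$-Lipschitz, by Lemma \ref{lemme distance}, and satisfies $A(\alpha_n)=n$). However, both of your concrete ingredients fail, and the failure you flag as ``the main obstacle'' is not a repairable technicality but a fundamental one.

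First, the element $\varphi$. Any mapping class supported in a disk $D$ whose complement contains a point of $K$ --- and your partial pseudo-Anosov, supported in a finitely punctured subdisk, is necessarily of this kind, since $K$ is infinite --- has \emph{all} of its orbits in the ray graph bounded. Indeed, choose a ray $\gamma$ landing on a point of $K$ outside $D$ and avoiding $D$ entirely (such a ray exists because the complement of $D\cup K$ is path-connected). Then $\varphi\gamma=\gamma$, and since each $\varphi^n$ acts as an isometry of $X_r$ fixing $\gamma$, we get $d(\alpha_0,\varphi^n\alpha_0)\le d(\alpha_0,\gamma)+d(\varphi^n\gamma,\varphi^n\alpha_0)=2\,d(\alpha_0,\gamma)$ for every $n$ and every ray $\alpha_0$. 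So your $\varphi$ acts with bounded orbits, and the growth $f(\varphi^n\alpha_0)\to\infty$, if it held, would only prove that $f$ is not coarsely Lipschitz. (This ``locally supported elements are invisible at large scale'' phenomenon is the same one exploited in Claim \ref{rq-WPD} to show the action of $\Gamma$ on $X_r$ is never WPD.) An element with unbounded orbits must move all of $K$, like the shift-type braid $h=t_1t_2t_1$ of Section \ref{section-qm}; it cannot be supported in a proper subsurface missing part of $K$.

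Second, and independently, the function $f(\alpha)=i(\alpha\cap D,\beta_0)$ is genuinely not coarsely Lipschitz, for any choice of $D$ and $\beta_0$, and truncating to a finitely punctured subdisk does not help: disjointness of two rays gives no control whatsoever on how differently they cross a third, fixed arc. Concretely, let $\beta_0$ join a puncture $p_1$ to $c$, let $\alpha$ enter $D$ on the side away from $\beta_0$ and land on a puncture there, so $f(\alpha)=0$, and let $\alpha'$ enter $D$ near $\beta_0$, spiral $N$ times around $p_1$ (each loop must cross $\beta_0$) and land on a puncture adjacent to $p_1$; all of this can be drawn disjoint from $\alpha$, so $d(\alpha,\alpha')=1$ while $|f(\alpha)-f(\alpha')|\ge N$. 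Intersection number with a fixed object can bound distance from above, never from below. The paper's certificate $A$ is of a different nature: $A(\gamma)$ records the depth to which $\gamma$ follows the recursively nested ``tubes'' $\mathring\alpha_k$, and its $1$-Lipschitz property is forced by the geometry of the construction --- the tube drawn around $\alpha_n$ bounds a region containing no point of $K$, so a ray disjoint from a ray beginning like $\mathring\alpha_n$ has no way to reach the Cantor set except by itself following $\mathring\alpha_{n-1}$. It is this nested forcing, which makes disjointness propagate the combinatorial pattern one level down, that is missing from your proposal and that no intersection count with a fixed finite object can replace.
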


\begin{theo*}[\ref{theo-hyperbolique}]
The ray graph is Gromov-hyperbolic.
\end{theo*}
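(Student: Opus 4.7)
The plan is to prove Gromov-hyperbolicity by verifying a ``guessing geodesics'' criterion in the spirit of Bowditch, Masur--Schleimer, and Hensel--Przytycki--Webb: to each pair of vertices $\alpha,\beta$ of the ray graph I will associate a canonical family of paths $P(\alpha,\beta)$, and then show (i) that consecutive vertices along such a path are at uniformly bounded distance in the ray graph, and (ii) that triangles of these preferred paths are uniformly slim, in the sense that for every third vertex $\gamma$ each vertex of $P(\alpha,\beta)$ lies within uniform distance of $P(\alpha,\gamma)\cup P(\gamma,\beta)$. These two properties together with a coarse length estimate imply Gromov-hyperbolicity with a uniform constant.

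First I would set up the combinatorial machinery. Given two rays $\alpha,\beta$ realized in minimal position in $\R^2\setminus K$ (transverse, with no embedded bigon and no half-bigon escaping to $\infty$), for each intersection point $p\in\alpha\cap\beta$ I define the \emph{unicorn ray} $U_p(\alpha,\beta)$ obtained by following $\alpha$ from its Cantor endpoint to $p$, and then $\beta$ from $p$ out to $\infty$. Each $U_p(\alpha,\beta)$ is again an embedded proper ray from a point of $K$ to $\infty$, hence a vertex of the ray graph. The intersection points inherit a natural linear order from their position along $\alpha$, and the resulting ordered list of unicorns (together with $\alpha$ and $\beta$) forms the preferred path $P(\alpha,\beta)$.

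Next I would prove the two geometric lemmas. For consecutive unicorns $U_{p_i}$ and $U_{p_{i+1}}$, the subarc of $\beta$ between $p_i$ and $p_{i+1}$ is disjoint from both rays outside a small neighborhood of $\beta$, so after a local surgery each such pair is at distance at most $2$ in the ray graph, yielding the first property. For the slim-triangle property, I would prove the key exchange lemma: if a ray $\gamma$ is disjoint from some unicorn $U_p(\alpha,\beta)$, then $\gamma$ itself appears, up to distance $2$, as a unicorn in either $P(\alpha,\gamma)$ or $P(\gamma,\beta)$. This is established by looking at the first intersection point between $\gamma$ and the relevant portion of $U_p(\alpha,\beta)$ and doing the same cut-and-paste surgery; it is the adaptation to proper rays of the slim-triangle argument of Hensel--Przytycki--Webb for arc complexes.

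The main obstacle I anticipate is the non-compactness of the surface $\R^2\setminus K$. Minimal position and bigon removal must be formulated carefully, since ``bigons'' now come in two flavors, true bigons bounded by arcs of $\alpha$ and $\beta$ with vertices in the interior, and half-bigons whose open side escapes to infinity; both must be eliminable by isotopy, and one must check that in minimal position the intersection $\alpha\cap\beta$ is finite whenever $\alpha$ and $\beta$ are at finite distance in the ray graph, so that the unicorn path $P(\alpha,\beta)$ has finite length. The usual finite-type arguments based on the Euler characteristic are unavailable, so I would need a separate lemma producing minimal position representatives with finite intersection number for any two vertices, after which the local surgery arguments proceed essentially as in the finite-type case.
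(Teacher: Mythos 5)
Your general strategy (canonical paths plus a thin-triangle criterion) is the right circle of ideas, but the specific unicorn construction you propose is exactly the variant that the paper explicitly rules out, and the failure happens at your key step. A unicorn built from two \emph{rays} must mix the two kinds of ends: your $U_p(\alpha,\beta)$ glues the Cantor-side of $\alpha$ to the infinity-side of $\beta$ (the honest Hensel--Przytycki--Webb unicorn, made of two initial segments from a common basepoint, would run from $\infty$ to $\infty$ and is not a vertex of $X_r$). For this mixed gluing the slim-triangle/exchange lemma is \emph{false} -- the paper says so verbatim: ``if we change the definition by choosing the unicorn arc as the union of the beginning of $a$ and the end of $b$, then we get an arc whose isotopy class is a ray, but Lemma 3.3 is false. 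This is why we defined the graph $X_\infty$.'' One can see concretely where your exchange argument breaks. Write $U_p(\alpha,\beta)=A\cup B$ with $A\subset\alpha$ the Cantor-side piece and $B\subset\beta$ the infinity-side piece, and let $\gamma$ be a third ray crossing $U_p(\alpha,\beta)$. The only controlled object is the maximal segment of $\gamma$, starting from one of its two ends, whose interior misses $A\cup B$, ending at a first crossing point $\sigma$. If you start from $\infty$ and $\sigma\in A$, the unicorn of $(\alpha,\gamma)$ at $\sigma$ is a subarc of $A$ together with that controlled segment of $\gamma$, hence at distance $\le 1$: fine. But if $\sigma\in B$, the candidate unicorn of $(\gamma,\beta)$ at $\sigma$ must use the \emph{Cantor-side} half of $\gamma$, about which the first-crossing argument says nothing: it can cross $U_p(\alpha,\beta)$ arbitrarily often. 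Starting instead from the Cantor end of $\gamma$ merely swaps which of the two cases is uncontrolled. For loops based at a common point, both halves of every unicorn are initial segments, and that symmetry is precisely what the HPW proof needs and what rays cannot provide, since their orientation from $\infty$ to $K$ is forced. (A smaller, fixable point: not every $p\in\alpha\cap\beta$ gives an embedded concatenation, so ``each $U_p$ is an embedded ray'' needs the same restriction as in HPW.)

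The paper's proof routes around this obstruction rather than through it. It introduces the graph $X_\infty$ whose vertices are isotopy classes of essential simple loops based at $\infty$ in $\Sph^2-K$; there the genuine HPW unicorn machinery applies (a regular neighborhood of the union of two loops is a compact surface, so the finite-type lemmas transfer with uniform constants), giving that $X_\infty$ is $20$-hyperbolic. It then proves that the map $f:X_r\to X_\infty$, sending a ray $x$ to any loop $\hat x$ disjoint from it, is a quasi-isometry, with explicit bounds $d(x,y)-2\le d(f(x),f(y))\le d(x,y)+4$ and coarse surjectivity, and concludes by quasi-isometry invariance of hyperbolicity for geodesic spaces. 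If you want to salvage your plan, you must either run the unicorn machine on loops at infinity and then establish this comparison between $X_r$ and $X_\infty$ -- which is the paper's proof -- or produce a genuinely different path family on $X_r$ for which slim triangles can actually be proved; as written, your key exchange lemma is a false statement.
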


\begin{theo*}[\ref{theo-halphak}] 
There exists an element $h \in \Gamma$ which acts by translation on a geodesic axis of the ray graph.
\end{theo*}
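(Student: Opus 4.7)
The plan is to produce an explicit element $h \in \Gamma$ together with an $h$-equivariant bi-infinite sequence of rays $(\alpha_n)_{n\in\Z}$, and to verify that this sequence is isometrically embedded in the ray graph, which is precisely what it means for it to be a geodesic axis on which $h$ acts by translation. I would first set up a model of $(\R^2, K)$ in which some homeomorphism has visible translation behaviour, e.g.\ by embedding the Cantor set $K$ so that it carries an obvious shift symmetry (say as the closure of a bi-infinite string of scaled copies of a Cantor piece lying along the real axis, suitably compactified), and take $h$ to realise the corresponding unit shift.

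Then I would choose a proper ray $\alpha_0$ starting at a distinguished point of $K$ and going to infinity in a direction transverse to the shift direction, arranged so that $\alpha_0$ and $h(\alpha_0)$ are disjoint and non-isotopic. Setting $\alpha_n := h^n(\alpha_0)$ gives $h \cdot \alpha_n = \alpha_{n+1}$, with consecutive rays at distance exactly $1$ in the ray graph. The upper bound $d(\alpha_0, \alpha_n) \le |n|$ then follows by concatenating these unit edges; the real content is the matching lower bound. For this I would try to extract a suitable combinatorial invariant from the proof of Theorem~\ref{theo-infini} --- typically a function on rays which is Lipschitz for distance in the ray graph and which grows linearly along the orbit of $h$, for example a signed count of intersections with a carefully chosen auxiliary proper arc or family of arcs separating consecutive fundamental pieces of $K$.

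Provided such an invariant can be designed, it immediately forces $d(\alpha_0, \alpha_n) \ge c|n|$, and after normalising the construction (passing to a power of $h$, or replacing $\alpha_0$ by a more symmetric representative if needed) one obtains $d(\alpha_0, \alpha_n) = |n|$; the sequence $(\alpha_n)_{n\in\Z}$ is then a geodesic line, and $h$ acts on it as the shift, proving the theorem. The main obstacle is clearly this lower bound: identifying a Lipschitz, linearly-growing quantity is exactly the sort of step for which no soft argument is available, and it will have to rest on the explicit combinatorial/topological structure already developed for the infinite-diameter and hyperbolicity results. By contrast, the construction of $h$ and of the rays $\alpha_n$, together with the upper bound and the final conclusion, should be essentially routine.
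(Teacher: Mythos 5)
There is a genuine gap, and it sits exactly where you placed your trust: the choice of $h$. The ``unit shift'' of a shift-symmetric Cantor set is not loxodromic on the ray graph --- it is elliptic. Concretely, if $\alpha_0$ is a ray leaving a point of $K$ transversally to the shift direction, then all the translates $h^n(\alpha_0)$ can be realized as parallel, pairwise disjoint arcs, so $d(h^m\alpha_0,h^n\alpha_0)\leq 1$ for all $m,n$: the orbit of $\alpha_0$ has diameter $1$, not infinite diameter. Since the action is by isometries, every orbit is then bounded (for any ray $\beta$, $d(\beta,h^n\beta)\leq 2d(\beta,\alpha_0)+1$), so neither passing to a power of $h$ nor choosing a cleverer base ray can repair this. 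In particular, the Lipschitz, linearly-growing invariant you hope to extract from the proof of Theorem \ref{theo-infini} cannot exist for this element: what you flag as ``the main obstacle'' is in fact an impossibility. Your proposed $h$ is essentially the element called $t_1$ in the paper (the shift of the Cantor pieces $K_k$ along the circle $\Cc_N$ through the northern hemisphere), and the paper exploits precisely its ellipticity: in the proof of Proposition \ref{prop-qm} one shows $d(\alpha_0,t_1^k\alpha_0)=1$ for all $k$, whence $\tilde q_w(t_1)=0$.

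What the paper does instead is twofold. First, the geodesic half-axis is constructed \emph{before} any group element enters: the sequence $(\alpha_k)$ of Section \ref{section1} is defined by a nesting recursion --- $\alpha_{k+1}$ follows $\alpha_k$ in a tubular neighborhood, turns around its Cantor-endpoint, retraces $\alpha_k$, turns around $\infty$, retraces again, and ends at a new point $p_{k+1}$ --- and it is this nesting that makes the prefix function $A(\gamma)=\max\{i \text{ such that } \gamma \text{ begins like } \mathring \alpha_i\}$ both $1$-Lipschitz (Lemma \ref{lemme distance}, Corollary \ref{dist}) and equal to $k$ at $\alpha_k$, which yields Proposition \ref{alpha_k geod}. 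Second, the loxodromic element is not a single shift but the composition $h=t_1t_2t_1$ of shifts in \emph{opposite} directions through the two complementary hemispheres (Figure \ref{figu:3h}); it is exactly this back-and-forth that forces the image of a ray to spiral around the original ray rather than slide off disjointly, and one verifies $h(\alpha_k)=\alpha_{k+1}$ by a computation with weighted graph representatives (Figure \ref{diag-h-alphak}). So your high-level architecture (equivariant sequence, distance-one consecutive edges, Lipschitz lower-bound invariant) does match the paper's, but the two inputs that carry all the mathematical content --- which element of $\Gamma$, and which sequence of rays --- must both be replaced for the argument to go through.
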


These results allow us to see $\Gamma$ as a group acting non trivially on a Gromov-hyperbolic space. We then use this action to construct non trivial quasimorphisms on $\Gamma$.

\subsection{Quasimorphisms and bounded cohomology}

A \emph{quasimorphism} on a group $G$ is a map $q:G\rightarrow \R$ such that there exists a constant $D_q$, called default of $q$, which satisfies the following inequality for all $a,b \in G$:
$$|q(ab)-q(a)-q(b)|\leq D_q.$$
The first examples of quasimorphisms are morphisms and bounded functions. These examples are called \emph{trivial quasimorphisms}. We say that a quasimorphism $q$ is non trivial if the quasimorphism
$\tilde q$ defined by $\tilde q(a) = \lim_{n\rightarrow \infty} {q(a^n)\over n}$ for all $a\in G$ is \emph{not} a morphism.

The \emph{space of classes of non trivial quasimorphisms} on a group $G$, that we will denote by $\tilde Q(G)$, is defined as the quotient of the space of quasimorphisms on $G$ by the direct sum of the subspace of bounded function and the subspace of real morphisms on $G$. Note that the existence of non trivial quasimorphisms on $G$ is equivalent to the existence of non zero elements in $\tilde Q(G)$.

The space $\tilde Q(G)$ coincides with the kernel of the natural morphism which sends the second group of bounded cohomology $H^2_b(G;\R)$ of $G$ in the second group of cohomology $H^2(G;\R)$ of $G$ (see for example Barge \& Ghys \cite{Barge-Ghys} and Ghys \cite{Ghys-Groups} for more details on bounded cohomology of groups). The study of this space $\tilde Q(G)$ gives information on the group $G$: for example, we know that it is trivial when $G$ is amenable (see Gromov \cite{Gromov}), or when $G$ is a cocompact irreducible high rank lattice (see Burger \& Monod~\cite{Burger-Monod}).

In \cite{Bestvina-Fujiwara}, Mladen Bestvina and Koji Fujiwara proved that $\tilde Q(G)$ has infinite dimension when $G$ is the mapping class group of a finite type surface. This result has many consequences, and in particular the authors proved that if $H$ is an irreducible lattice in a connected semi-simple Lie group with no compact factors, with finite center, and of rank greater than $1$, then every morphism from $H$ to the mapping class group of a finite type surface has finite image.

These results, as well as potential applications in dynamics, motivate the research of non trivial quasimorphims on $\MCG(\R^2-\Cantor)$ proposed by Calegari \cite{blog-Calegari}. 
We show here the following result:
\begin{theo*}[\ref{dim infinie}] The space $\tilde Q(\Gamma)$ of classes of non trivial quasimorphisms on $\Gamma$ has infinite dimension.
\end{theo*}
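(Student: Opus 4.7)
The strategy is to apply the general machinery of Bestvina--Fujiwara \cite{Bestvina-Fujiwara}, which produces an infinite-dimensional space of quasimorphisms for any group acting on a Gromov-hyperbolic space with sufficiently many ``independent'' hyperbolic isometries satisfying a weak proper discontinuity-type condition. The action of $\Gamma$ on the ray graph is the obvious candidate: hyperbolicity is given by Theorem \ref{theo-hyperbolique}, and Theorem \ref{theo-halphak} already produces one hyperbolic element $h$ with a genuine geodesic axis. The plan is to leverage $h$ and its $\Gamma$-conjugates to satisfy the hypotheses of the Bestvina--Fujiwara construction.

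More precisely, I would first recall from \cite{Bestvina-Fujiwara} that it suffices to exhibit two ``independent'' hyperbolic elements $f_1, f_2 \in \Gamma$ acting on the ray graph, where independence means that no positive powers of their axes fellow-travel for arbitrarily long distances under the action of $\Gamma$, together with a condition controlling the set of group elements that coarsely preserve a long segment of the axis of one $f_i$ (the so-called ``$\text{WPD}$'' or ``weak proper discontinuity'' condition in Bestvina--Fujiwara's formulation, phrased in terms of counting translates that stay close on long segments). Once such a pair $(f_1, f_2)$ is available, Bestvina--Fujiwara's counting quasimorphisms, built from embedded copies of words in $f_1, f_2$ and their inverses, produce an infinite-dimensional subspace of $\tilde Q(\Gamma)$.

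To produce the required elements I would proceed as follows. Starting with the element $h$ of Theorem \ref{theo-halphak}, I would pick a conjugate $f_1 = h$ and $f_2 = g h g^{-1}$ for a suitably chosen $g \in \Gamma$ that moves the axis of $h$ far enough away so that the two axes project onto each other only on a bounded segment. The homogeneous quasimorphisms built from $f_1$ and $f_2$ then lie in an infinite-dimensional subspace; alternatively, iterating this construction with infinitely many conjugates of $h$, selected so that their axes become pairwise ``independent'' in the above sense, yields infinitely many linearly independent classes in $\tilde Q(\Gamma)$. The freedom in choosing $g$ comes from the richness of $\Gamma$, which acts on the ends of the Cantor set and can displace any finite configuration of rays arbitrarily.

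The main obstacle, and the step I expect to require real work, is the verification of the weak proper discontinuity (or ``fellow-traveling'') hypothesis for $h$ acting on the ray graph. Unlike finite-type mapping class groups, $\Gamma$ is uncountable and not even locally compact, so one cannot hope for genuine proper discontinuity; one must instead show the weaker statement that, given a long segment of the axis of $h$, only ``few'' elements of $\Gamma$ move it a bounded distance to itself, in the precise counting sense of Bestvina--Fujiwara. This reduces to a concrete combinatorial fact: an element of $\Gamma$ that preserves a long chain of pairwise disjoint rays realizing a segment of the axis is determined, up to finite ambiguity, by its action on that chain. I would prove this by analyzing how such a chain cuts the plane into regions and how the Cantor set is distributed among them, using the fact that the hyperbolic element $h$ constructed in Theorem \ref{theo-halphak} has a ``pseudo-Anosov-like'' behavior producing axes whose combinatorics genuinely pin down the supporting homeomorphism up to finite indeterminacy.
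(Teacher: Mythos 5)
Your proposal has two genuine gaps, and both occur exactly where the paper has to do real work.

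First, your choice of elements cannot satisfy the hypotheses of Bestvina--Fujiwara's Theorem~1. You take $f_1=h$ and $f_2=ghg^{-1}$; but any conjugate of $h$ is automatically \emph{equivalent} to $h$ in the Bestvina--Fujiwara sense: the element $g^{-1}$ maps arbitrarily long segments of the axis of $ghg^{-1}$ isometrically onto the axis of $h$ with matching orientation, so the required condition $f_1\nsim f_2$ fails for \emph{every} choice of $g$, no matter how far $g$ displaces the axis. (Displacing the axis can give independence, i.e.\ unbounded distance between half-axes, but never inequivalence.) The paper instead takes $h_2=\phi h^{-1}\phi^{-1}$, a conjugate of the \emph{inverse}, so that the condition $h_1\nsim h_2$ becomes the statement that no $g\in\Gamma$ can map a long segment of the axis $(\alpha_k)_k$ into a $B$-neighborhood of that axis with \emph{reversed} orientation (non-reversibility, Proposition~\ref{copies}). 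This is the key point your plan never reaches: one must rule out equivalence between $h$ and conjugates of $h^{-1}$, and that is a nontrivial property of the specific axis, proved in the paper via the ``number of positive intersections'' $I(\alpha,\beta)$, a conjugation-invariant, orientation-sensitive count, combined with the coding map $A$ (Lemmas~\ref{w1} and~\ref{w2}).

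Second, the technical step you expect to carry the proof --- that an element of $\Gamma$ preserving a long chain of rays of the axis is ``determined up to finite ambiguity'' --- is false, and provably so. As the paper observes (Claim~\ref{rq-WPD}), \emph{no} element of $\Gamma$ acts weakly properly discontinuously on the ray graph: given any finite collection of rays, every mapping class supported in a disk that avoids those rays but meets the Cantor set fixes the whole collection, and there are uncountably many such classes. So no analysis of how the chain cuts the plane can pin down the homeomorphism up to finite indeterminacy; the stabilizer of the chain is enormous. The correct substitute is not a counting/finiteness statement at all but the orientation obstruction above: $I(g\cdot\alpha_{i+2},g\cdot\alpha_i)=I(\alpha_2,\alpha_0)=0$ for all $g$, while a reversed copy lying near the axis would force a positive intersection by Lemma~\ref{lemme-I1}. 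Finally, note that independence of the two axes also requires an argument in this setting (the paper checks it with the coding of rays and the symmetries $\phi,\sigma$); it does not follow merely from ``the richness of $\Gamma$''.
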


In particular, this implies that the \emph{stable commutator length} is unbounded on $\Gamma$.

\subsection{Stable commutator length}

When $G$ is a group, we denote by $[G,G]$ its derived subgroup, i.e. the subgroup of $G$ generated by commutators. For all $a \in [G,G]$, we denote by $cl(a)$ the \emph{commutator length} of $a$, i.e. the smallest number of commutators whose product is equal to $a$. We defined the \emph{stable commutator length} of $a$ by:
$$\scl(a):=\lim_{n\rightarrow +\infty} {cl(a^n)\over n}.$$

In particular, this quantity is invariant by conjugation (see Calegari \cite{SCL} for more details on the stable commutator length). The study of this quantity is related to non trivial quasimorphisms by a duality theorem: Christophe Bavard proved in \cite{CB} that the space of classes of non trivial quasimorphisms on a group $G$ is trivial if and only if all the elements of $[G,G]$ have vanishing $\scl$.

In the case of $\Gamma$ that we are interested in, Danny Calegari showed in \cite{blog-Calegari} that if $g \in \Gamma$ has a bounded orbit on the ray graph, then $\scl(g)=0$. This property distinguishes the action of $\Gamma$ on the ray graph from the action of mapping class group of finite type surfaces on curve complexes: indeed, Endo \& Kotschick \cite{Endo-Kotschick} and Korkmaz \cite{Korkmaz-scl} proved that Dehn twists (which have bounded orbits on curve complexes) have positive $\scl$.

In the finite type setting, we now know how to characterize precisely the elements with vanishing $\scl$ in terms of the Nielsen-Thurston classification (see Bestvina, Bromberg \& Fujiwara \cite{Bestvina-Bromberg-Fujiwara}). For $\Gamma$, one could ask whether the converse of Calegari's proposition is true: do every elements of $\Gamma$ with vanishing $\scl$ have a bounded orbit on the ray graph? We exhibit here a loxodromic element of $\Gamma$ with vanishing $\scl$ (Proposition~\ref{ex-scl_nulle-hyp}), proving that a characterization of the elements of $\Gamma$ having vanishing $\scl$ would be more refined that the classification between elements having bounded or unbounded orbits.

\subsection{Ideas of proofs }

\subsubsection{Infinite diameter}
In Section \ref{section1}, we construct a sequence of rays $(\alpha_k)_k$ which is unbounded in the ray graph, proving that the ray graph has infinite diameter.

\begin{figure}[!h]
\labellist
\pinlabel $\infty$ at 49 458
\pinlabel $a_1$ at 176 416
\pinlabel $a_2$ at 212 227
\pinlabel $a_3$ at 223 24
\endlabellist
\centering
\includegraphics[scale=0.45]{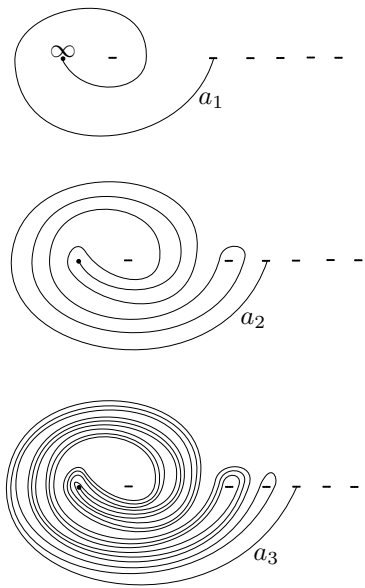}
\caption{Construction of $a_2$ from $a_1$, and of $a_3$ from $a_2$.}
\label{figu:tube}
\end{figure}
  
This sequence is constructed by induction from the following idea: if we consider a representative $a_1$ of some ray and an arc $a_2$ which forms a "tube" in a small neighborhood around $a_1$ (as in figure \ref{figu:tube}), then any arc which is disjoint from $a_2$ and which represents a ray has to starts at infinity and to stop on a point of the Cantor set, without crossing $a_2$. Such an arc has to "follow $a_1$" before it could possibly escape the tube drawn by $a_2$ and reach a point of the Cantor set.
  
Now if $a_3$ is an arc representing a ray and which draws a tube in a small neighborhood of $a_2$ (see figure \ref{figu:tube}), the same phenomenon is true: any arc disjoint from $a_3$ has to "follow $a_2$" before it could possibly escape the tube drawn by $a_3$ and reach a point of the Cantor set.

It follows from these observations that every ray at distance $1$ from the ray represented by $a_3$ has to begin like $a_2$, which forces every ray at distance $2$ from $a_2$ to begins like $a_1$: if for example $\beta$ is the ray represented by an arc which joins infinity to the end point of $a_1$ and stays in the north hemisphere, then the distance between $\beta$ and $a_3$ in the ray graph is at least $3$. Indeed, every arc which begins like $a_2$ or $a_1$ is not homotopically disjoint from $\beta$, thus all the representatives of rays at distance $1$ or $2$ from the ray represented by $a_3$ intersect any arc homotopic to $\beta$.

We then choose $a_4$ which draws a tube around $a_3$: every ray at distance $1$ from the ray represented by $a_4$ begins like $a_3$; this implies that every ray at distance $2$ from $a_4$ begins like $a_2$; this implies that every ray at distance $3$ from $a_4$ begins like $a_1$; and this implies that the distance between the ray represented by $a_4$ and $\beta$ is at least $4$.

We can keep going by choosing $a_5$ which draws a tube around $a_4$, etc. For every $k$, we get a ray $\alpha_k$ represented by $a_k$, and such that every representative of a ray which is at distance smaller than $k$ from $\alpha_k$ begins like $a_1$, and thus intersects $\beta$.

To make all this discussion rigorous, we define in Section~\ref{section1} a coding for some rays, and the sequence $(\alpha_k)_{k\in \N}$ of the rays which draw the needed "tubes". Using the coding, we show that this sequence is unbounded in the ray graph (Theorem~\ref{theo-infini}), and that it defined a geodesic half-axis in this graph (Proposition \ref{alpha_k geod}).

\subsubsection{Hyperbolicity}

In Section \ref{section2}, we prove that the ray graph is Gromov-hyperbolic (Theorem \ref{theo-hyperbolique}). We define an other graph $X_\infty$ whose vertices are isotopy classes of simple loops on $\Sph^2 - K$, based on infinity, and whose edges are pairs of such loops having disjoint representatives. We show that this graph $X_\infty$ is Gromov-hyperbolic by adapting the proof of the uniform hyperbolicity of arc complexes with unicorn paths, given by Sebastian Hensel, Piotr \mbox{Przytycki} and Richard Webb in \cite{HPW}.

We then prove that the graph $X_\infty$ is quasi-isometric to the ray graph, which establishes the Gromov-hyperbolicity of the latter. To this end, we define a map between the ray graph and $X_\infty$, which sends every ray $x \in X_r$ to a loop $\hat x$ of $X_\infty$ such that $x$ and $\hat x$ have disjoint representatives. We prove that this map is a quasi-isometry.

\subsubsection{Loxodromic element}

In Section \ref{section-qm}, we use again the sequence of rays $(\alpha_k)_k$ defined in Section \ref{section1}, which is a geodesic half-axis in the ray graph. We exhibit an element $h \in \Gamma$ which acts by translation on this axis (Theorem \ref{theo-halphak}). The element $h$ can be represented by the braid of figure \ref{figu-h}. The dots represent the Cantor set $K$, and each string carries all the dots of the corresponding subset of the Cantor set. We show that for all $k\in \N$, $h(\alpha_k)=\alpha_{k+1}$.

\begin{figure}[!h]
\labellist
\small\hair 2pt
\pinlabel $\infty$ at 211 204
\pinlabel $h$ at 465 110
\endlabellist
\centering
\vspace{0.2cm}
\includegraphics[scale=0.4]{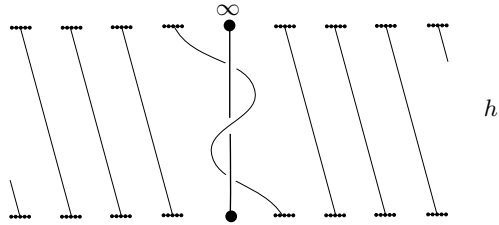}
\caption{Representation of the element $h \in \Gamma$.}
\label{figu-h}
\end{figure}

\subsubsection{Quasimorphisms}
We then construct non trivial quasimorphisms on $\Gamma$. In \cite{Fujiwara}, Koji Fujiwara define counting quasimorphisms on groups acting by isometries on Gromov-hyperbolic spaces, generalizing the construction of Brooks \cite{Brooks} on free groups.
Mladen Bestvina and Koji Fujiwara use this construction to prove that the spaces of classes of non trivial quasimorphisms on mapping class groups of finite type surfaces have infinite dimension (see \cite{Bestvina-Fujiwara}). The Gromov-hyperbolic space that they consider is the curve complex of the surface, and they prove that the action of the mapping class group on this complex is \emph{weakly properly discontinuous}. In particular, this property guarantees the non triviality of some quasimorphisms given by Fujiwara's construction.

Now that we know that $\Gamma$ also acts on a Gromov-hyperbolic graph (the ray graph), Fujiwara's construction gives us quasimorphisms on $\Gamma$. We then need to prove that some of them are non trivial. Unfortunately, the action of $\Gamma$ on the ray graph is not \emph{weakly properly discontinuous} (see the beginning of Section \ref{rq-WPD}). However, we can count the \emph{"number of positive intersections"}, and use it to prove that the axis $(\alpha_k)_k$ is \emph{non reversible} (Proposition \ref{copies}). This property generalizes the fact for $h$ to be non conjugated to its inverse. More precisely, we prove that for every sufficiently long oriented segment $w$ along this axis $(\alpha_k)_k$, if some element of $\Gamma$ maps $w$ in a neighborhood of $(\alpha_k)_k$, then the image of $w$ by this element is oriented in the same direction than $w$. This property of the axis $(\alpha_k)_k$, as well as the action of $h$ on this axis, help us to construct an explicit non trivial quasimorphism on $\Gamma$ (Proposition~\ref{prop-qm}).

We then use again the element $h \in \Gamma$, as well as a conjugate of its inverse, to show using another theorem of Bestvina and Fujiwara \cite{Bestvina-Fujiwara} and the Proposition~\ref{copies} of non-reversibility, that the space $\tilde Q(\Gamma)$ of classes of non trivial quasimorphisms on $\Gamma$ has infinite dimension (Theorem \ref{dim infinie}).

\subsubsection{Scl}
Finally, we give an example of an element of $\Gamma$ with vanishing $\scl$ and a loxodromic action on the ray graph.

\subsection{Acknowledgments}
I thank my PhD advisor, Fr\'{e}d\'{e}ric Le Roux, for his great availability, his numerous advices, and his careful reviews of the different (french) versions of this text. I thank Danny Calegari for his interest to this work, and for suggesting to add an example of a loxodromic element with vanishing $\scl$, in addition to the questions asked on his blog... I also thank Nicolas Bergeron for his explanations around hyperbolic surfaces.

This work was supported by grants from Région Ile-de-France\footnote{And the author benefits from the support of the French government “Investissements d’Avenir”, program ANR-11-LABX-0020-01, while translating this text in English.}.

\section{First study of the ray graph: infinite diameter and geodesic half-axis}\label{section1}

We show here that the ray graph has infinite diameter. We construct a sequence of rays $(\alpha_n)_{n\geq0}$  and we show that this sequence is unbounded in the ray graph. 
More precisely, we code some rays by sequences of segments, to be able to handle them more easily during the proofs. We define with this coding the sequence of rays $(\alpha_n)_n$ that we care about. Finally, we show that this sequence is unbounded in the ray graph, and that it defines a geodesic half-axis in the graph. The results of this section will be used in Section \ref{section-qm}.

\subsection{Preliminaries}
In the rest of the paper, we will use the following notations, propositions and vocabulary.

\subsubsection*{Cantor set $K$}
We denote by $K$ a Cantor set embedded in $\Sph^2$, and we choose a point of $\Sph^2 - K$ that we denote by $\infty$. We identify $\R^2 -K$ and $\Sph^2-(K\cup \{\infty\})$. It is known that if $K'$ is another Cantor set embedded in $\Sph^2$, and if $\infty'$ is a point of $\Sph^2 - K'$, then there exists a homeomorphism of $\Sph^2$ which maps $K'$ on $K$ and $\infty'$ on $\infty$ (see for example the appendix $A$ of Béguin, \mbox{Crovisier~\&~Le~Roux \cite{Beguin-Crovisier-FLR_Cantor}).}

\subsubsection*{Arcs, homotopies and isotopies}

Let ${a}:[0,1] \rightarrow \Sph^2$ be a continuous map such that $\{ {a}(0)\}$ and $\{{a}(1)\}$ are included in $K\cup \{\infty\}$, and such that ${a}(]0,1[)$ is included in $\Sph^2-(K\cup \{\infty\})$. We call \emph{arc} this map ${a}$, and to simplify we call $a$ again the image of $]0,1[$ by ${a}$. If moreover the map ${a}$ is injective, we say that ${a}$ is a \emph{simple arc} of $\Sph^2-(K \cup \{\infty\})$.\\

We say that two arcs ${a}$ and ${b}$ of $\Sph^2-(K \cup \{\infty\})$ are \emph{homotopic} if there exists a continuous map $H:[0,1]\times [0,1] \rightarrow \Sph^2$ such that:
\begin{itemize}
\item $H(0,\cdot)={a}(\cdot)$ and $H(1,\cdot)={b}(\cdot)$;
\item $H(\cdot,0)$ and $H(\cdot,1)$ are constant (the endpoints are fixed);
\item $H(t,s) \in \Sph^2-(K \cup \{\infty\})$ for all $(t,s) \in [0,1]\times ]0,1[$.
\end{itemize}

If $a$ and $b$ are simple, homotopic, and if moreover there exists a homotopy $H$ such that for all $t\in[0,1]$, $H(t,\cdot)$ is a simple arc, then we say that $a$ and $b$ are \emph{isotopic}. David Epstein proved in \cite{Epstein} that on a surface, two homotopic arcs are necessarily isotopic. In this paper, we will use indifferently homotopy and isotopy on surfaces.

We say that two isotopies classes of arcs $\alpha$ and $\beta$ are \emph{homotopically disjoint} if there exist two representatives $a$ of $\alpha$ and $b$ of $\beta$ so that $a(]0,1[)$ and $b(]0,1[)$ are disjoint. We say that two arcs $a$ and $b$ are \emph{homotopically disjoint} if they represent two homotopically disjoint isotopy classes. A \emph{bigon} between two arcs $a$ and $b$ is a connected component of the complement of $a\cup b$ in $\Sph^2-(K\cup \{\infty\})$, which is homeomorphic to a disk, and whose boundary of its closure is the union of a subarc of $a$ and a subarc of $b$. We say that two proper arcs $a$ and $b$ are in \emph{minimal position} if all their intersections are transverse, and if there is no bigon between $a$ and $b$.

\subsubsection*{Ray graph}
\begin{definition}
A  \emph{ray} is an isotopy class $\alpha$ of simple arcs with endpoints $\alpha(0)=\infty$ and $\alpha(1)\in K$. We call \emph{Cantor-endpoint of $\alpha$} the point $\{\alpha(1) \}$.
\end{definition}

\begin{definition} [Calegari \cite{blog-Calegari}]
The  \emph{ray graph}, denoted by $X_r$, is the graph defined as follow~:
\begin{itemize}
\item The set of vertices is the set of rays previously defined;
\item Two vertices are joined by an edge if and only if they are homotopically disjoint.
\end{itemize}
\end{definition}

\begin{remark} The ray graph is connected. Observe that if two rays $\alpha$ and $\beta$ have infinitely many intersections (up to isotopy), then they necessarily have the same Cantor-endpoint. Thus there exists a ray $\alpha'$ which is disjoint from $\alpha$ and intersects $\beta$ finitely many times (take any ray disjoint from $\alpha'$ and with a distinct Cantor-endpoint). We can then adapt the classical proof of the connectedness of the curve complex, given for example in Farb \& Margalit~\cite{Farb-Margalit}, Theorem $4.3$ page $97$, to find a path between $\alpha'$ and $\beta$.
\end{remark}

\subsubsection*{Preliminaries on isotopy classes of curves}
We will use the following results, adapted from Casson \& Bleiler~\cite{Casson-Bleiler}, Handel \cite{Handel} and Matsumoto \cite{Matsumoto}. We equip $\Sph^2 - (K\cup \{ \infty\})$ with a complete hyperbolic metric of the first kind. Its universal cover is the hyperbolic plane $\mathbb{H}^2$.

\begin{prop} \label{prop 3.5}
Let $\mathcal A$ and $\mathcal B$ be two locally finite families of simple arcs of $\Sph^2-(K\cup \{\infty\})$ such that all the elements of $\mathcal A$ (respectively $\mathcal B$) are mutually homotopically disjoint. Assume that for all $a \in \mathcal A$ and $b \in \mathcal B$, $a$ and $b$ are in minimal position.

Then there exists a homeomorphism $h$ which is isotopic to the identity by an isotopy which fixes $K\cup \{ \infty \}$, and such that for all $a \in \mathcal A$ and $b \in \mathcal B$, $h(a)$ and $h(b)$ are geodesic.
\end{prop}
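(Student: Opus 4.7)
The strategy is the classical straightening. Each simple arc $c$ in $\Sph^2 - (K \cup \{\infty\})$ with endpoints in $K \cup \{\infty\}$ has a unique geodesic representative $c^*$ in its homotopy class, obtained by lifting to $\mathbb{H}^2$ and replacing the lift by the unique geodesic joining the two cusps corresponding to the endpoints of $c$. The plan is to show that the locally finite family $\mathcal{A}^* \cup \mathcal{B}^*$ of geodesic representatives is ambient isotopic to $\mathcal{A} \cup \mathcal{B}$ via an isotopy fixing $K \cup \{\infty\}$, and to produce a homeomorphism $h$ realizing this isotopy.

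First I would establish the combinatorial equivalence of the two locally finite embedded graphs $\mathcal{A} \cup \mathcal{B}$ and $\mathcal{A}^* \cup \mathcal{B}^*$. By a standard lift-to-$\mathbb{H}^2$ argument, two homotopically disjoint simple arcs have disjoint geodesic representatives, so the arcs of $\mathcal{A}^*$ (respectively $\mathcal{B}^*$) are mutually disjoint. Since each pair $a\in\mathcal{A}$, $b\in\mathcal{B}$ is in minimal position, the bigon criterion (again proved by lifting: any innermost bigon between $a^*$ and $b^*$ would lift to a bigon in $\mathbb{H}^2$ between two geodesics, which is impossible) yields a natural bijection between the intersection points of $a$ with $b$ and those of $a^*$ with $b^*$, respecting the cyclic order around each intersection point. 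I would then build the ambient isotopy locally: first on a small disk around each intersection point matching the two local pictures, then along each arc using a tubular strip neighborhood, and finally across each complementary region (a topological disk or once-punctured disk whose boundary is now identified on the two sides) by the Alexander trick relative to the prescribed boundary identification.

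The main obstacle is the passage from the finite-type case to the locally finite case. I would handle this by exhausting $\Sph^2 - (K \cup \{\infty\})$ by an increasing sequence of compact subsurfaces $S_n$, each meeting only finitely many arcs of $\mathcal{A} \cup \mathcal{B}$ (available by local finiteness). On each $S_n$ the finite version of the straightening argument applies. The isotopies on successive subsurfaces are made compatible by constructing the isotopy on $S_n$ to agree with its predecessor on a neighborhood of $S_{n-1}$, using the uniqueness of the straightening up to isotopy supported near the newly incorporated arcs, so that only the finitely many arcs entering $S_n \setminus S_{n-1}$ need to be moved at step $n$. Concatenating these isotopies (and extending by the identity in a neighborhood of $K \cup \{\infty\}$) produces the desired homeomorphism $h$ of $\Sph^2$ isotopic to the identity by an isotopy fixing $K \cup \{\infty\}$ and taking every arc of $\mathcal{A} \cup \mathcal{B}$ to its geodesic representative.
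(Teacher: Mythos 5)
First, a point of comparison: the paper does not prove this proposition at all --- it is stated as background, ``adapted from Casson \& Bleiler, Handel and Matsumoto'' --- so your attempt can only be measured against the arguments in those references. Your overall plan (unique geodesic representatives via lifts to $\mathbb{H}^2$, preservation of disjointness and of intersection patterns under straightening, an ambient isotopy realizing the straightening, an exhaustion to handle local finiteness) has the right shape, and your first two steps are correct. The genuine gap is in the extension across complementary regions, which is exactly where the content of the proposition lies. A complementary region $R$ of $\bigcup\mathcal{A}\cup\bigcup\mathcal{B}$ is in general \emph{not} a disk or once-punctured disk: it can contain a Cantor subset of $K$ (already when $\mathcal{A}\cup\mathcal{B}$ consists of two disjoint rays, the unique complementary region contains all of $K$ except two points), and it may be bounded by infinitely many arcs. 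Because the desired isotopy must fix $K\cup\{\infty\}$ at every time, $h$ must fix every point of $K$ \emph{pointwise}, including the points interior to $R$; the Alexander coning extension does not do this. More seriously, even an extension chosen to fix $K\cap R$ pointwise does not make $h$ isotopic to the identity relative to $K\cup\{\infty\}$: the mapping class group of a disk minus a Cantor set (relative to its boundary) is highly nontrivial, so matching the arcs and the boundary data determines $h$ only up to composition with arbitrary mapping classes supported in the complementary regions. Concretely, if $h$ is an output of your construction and $T_\gamma$ is a Dehn twist about a curve $\gamma\subset R-K$ separating $K\cap R$ into two uncountable pieces, then $h\circ T_\gamma$ is an equally valid output --- it carries every arc to its geodesic and fixes $K\cup\{\infty\}$ pointwise --- yet it differs from $h$ by a nontrivial mapping class. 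Since isotopy-triviality is precisely the assertion to be proved, $h$ must be produced as the terminal map of an honest ambient isotopy (for instance by straightening the arcs one at a time using Epstein's theorem that homotopic simple arcs are isotopic, together with the isotopy extension theorem, keeping previously straightened geodesics fixed), not by an a posteriori extension over the complement.

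Two further, more repairable, defects in your exhaustion step. The stage-$n$ isotopy cannot be taken ``supported near the newly incorporated arcs'': the geodesic representative of a new arc may traverse all of $S_{n-1}$, so what one can require is only that the stage-$n$ isotopy fixes the already straightened geodesics. Likewise one cannot ``extend by the identity in a neighborhood of $K\cup\{\infty\}$'': the arcs end at points of $K\cup\{\infty\}$, so the straightening must move points in every neighborhood of those endpoints. To make the infinite concatenation of isotopies converge to an isotopy of $\Sph^2$ fixing $K\cup\{\infty\}$, one should instead choose the exhaustion so that the components of $\Sph^2-S_n$ have spherical diameter tending to $0$, and one must also verify that the family of geodesic representatives is again locally finite --- a statement your sketch uses implicitly and which is part of what the cited references actually prove.
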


\begin{prop} \label{prop-hyp}
Let ${a}$ and ${b}$ be two arcs of $\Sph^2 - (K \cup \{\infty\})$. If $\tilde {a}$ is a lift of $a$ in the universal cover, then there exist two points $p^-$ and $p^+$ on the boundary $\partial \mathbb{H}^2$ of the universal cover $\mathbb{H}^2$ such that $\tilde a(t)$ goes to $p^-$, respectively $p^+$, when $t$ goes to $0$, respectively $1$. We call endpoints of $\tilde a$ these two points. If $\tilde {a}$ and $\tilde {b}$ are two lifts of ${a}$ and ${b}$ respectively, which have the same endpoints in the boundary of the universal cover, then $a$ and $b$ are isotopic in $\Sph^2 - (K \cup \{\infty\})$.
\end{prop}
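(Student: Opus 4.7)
The proof has two parts: establishing convergence of lifts to boundary points, and recovering the isotopy class from these data. The plan is to exploit the standard correspondence between punctures and parabolic fixed points of the deck group. For the first claim, the complete hyperbolic structure of the first kind on $\Sph^2-(K\cup\{\infty\})$ makes every puncture a cusp: around $a(0)\in K\cup\{\infty\}$ there is an embedded horoball neighborhood $U$ whose preimage under the covering $\pi:\mathbb{H}^2\to\Sph^2-(K\cup\{\infty\})$ is a disjoint union of horoballs $\{H_i\}$, each based at a parabolic fixed point on $\partial\mathbb{H}^2$. Since $a(t)\to a(0)$ as $t\to 0$, $a(t)\in U$ for all sufficiently small $t$, so $\tilde a(t)\in\bigsqcup H_i$; by continuity and connectedness, it lies in a single horoball $H$. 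Shrinking $U$ and reapplying the argument shows that $\tilde a(t)$ remains in $H$ and leaves every compact subset of $\mathbb{H}^2$, and since escape to infinity inside a horoball is precisely convergence to its base point, we get $\tilde a(t)\to p^-$. The argument as $t\to 1$ is identical.

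For the second claim, because parabolic fixed points correspond one-to-one with cusps, sharing endpoints forces $a(0)=b(0)$ and $a(1)=b(1)$. Assume first $p^-\neq p^+$, and let $g\subset\mathbb{H}^2$ be the unique geodesic from $p^-$ to $p^+$. I straighten $\tilde a$ to $g$ by the hyperbolic straight-line homotopy $F_s(t)$ moving each point $\tilde a(t)$ along the geodesic segment to its nearest-point projection $\pi_g(\tilde a(t))\in g$. As $t\to 0$ (resp.\ $t\to 1$), both $\tilde a(t)$ and $\pi_g(\tilde a(t))$ approach $p^-$ (resp.\ $p^+$) in the closed disk, so the geodesic segment between them --- a semicircle perpendicular to $\partial\mathbb{H}^2$ with both feet near $p^\mp$ --- sits in an arbitrarily small neighborhood of $p^\mp$, and $F_s$ extends continuously to $\{0,1\}$ fixing the ideal endpoints. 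Concatenating with the analogous straightening of $\tilde b$ yields a continuous homotopy between $\tilde a$ and $\tilde b$ in $\mathbb{H}^2\cup\{p^-,p^+\}$, rel ideal endpoints. Projecting via $\pi$ --- which extends continuously to send $p^\pm$ to the punctures $a(0),a(1)$ --- gives a homotopy between $a$ and $b$ rel endpoints in $K\cup\{\infty\}$, and Epstein's theorem (quoted in the preliminaries) promotes this homotopy to an isotopy.

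The main subtlety is in the second part: confirming that the straight-line homotopy extends continuously to the ideal endpoints and that its projection is continuous at the punctures. Geometrically, once both $\tilde a(t)$ and $\pi_g(\tilde a(t))$ lie close (in the Euclidean sense of the closed disk) to the boundary point $p^\mp$, the entire geodesic segment between them does too, because its endpoints on $\partial\mathbb{H}^2$ are close to $p^\mp$ and the semicircle between them is correspondingly shallow. Continuity of the projection near the punctures then follows from the usual behavior of covering maps near cusps (restricted to a horoball, $\pi$ is a covering of a punctured disk by a half-plane). The edge case $p^-=p^+$ only arises when $a$ and $b$ are homotopic to powers of the loop around the common puncture, and can be treated directly by comparing lifts via the cyclic parabolic subgroup fixing $p^-$.
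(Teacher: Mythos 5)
The paper never actually proves this proposition: it is stated as ``adapted from Casson \& Bleiler, Handel and Matsumoto,'' so your attempt has to stand on its own. It does not, because it rests on a false premise about the geometry of the surface. You treat every point of $K\cup\{\infty\}$ as a puncture, i.e.\ a cusp of the complete hyperbolic metric, with an embedded horoball neighborhood whose preimage is a union of horoballs based at parabolic fixed points. That is true only for the isolated end $\infty$. A point $p$ of the Cantor set $K$ is a \emph{non-isolated} end of the infinite-type surface $\Sph^2-(K\cup\{\infty\})$: every neighborhood of $p$ contains infinitely many other points of $K$, so no neighborhood of that end is a punctured disk (an annulus). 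Such an end is therefore not a cusp, has no horoball neighborhood, and has no parabolic element of the deck group attached to it. Since every arc considered in the paper has at least one endpoint in $K$ (rays run from $\infty$ to $K$), the case your argument covers is the easy one, and every step that quotes cusp structure breaks at the Cantor endpoints: the convergence of $\tilde a(t)$ to $p^\mp$ (horoballs), the claim that equal ideal endpoints force equal endpoints in $K\cup\{\infty\}$ (``parabolic fixed points correspond one-to-one with cusps''), the continuity of the projected straightening homotopy at the ends (``restricted to a horoball, $\pi$ is a covering of a punctured disk by a half-plane''), and the edge case $p^-=p^+$ (``the cyclic parabolic subgroup fixing $p^-$'').

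The missing idea --- which is the actual content of the cited references --- is a substitute for horoballs at a Cantor end. Because $K$ is totally disconnected, one can choose a nested sequence of essential simple closed curves $c_n$ in $\Sph^2-(K\cup\{\infty\})$ encircling smaller and smaller clopen neighborhoods of $p$ in $K$, realize them as disjoint simple closed geodesics, and let $W_n$ be the region bounded by $c_n$ on the side of $p$. The tail of $\tilde a$ lies in a component $\tilde W_n$ of the preimage of $W_n$; such a component is a component of the complement of a disjoint union of complete geodesics, hence an intersection of half-planes, hence convex. The nested intersection $\bigcap_n\overline{\tilde W_n}$ (closures in the closed disk) is nonempty, compact and convex, and it is contained in $\partial\mathbb{H}^2$ because no point of the surface lies in all the $W_n$; a convex subset of the closed disk contained in the circle is a single point, which is your $p^-$. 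The same convexity gives the continuity you need in the second part: the geodesic tracks of the straightening homotopy have both endpoints in $\tilde W_n$, hence stay in $\tilde W_n$, hence project into $W_n$, a small neighborhood of $p$ in $\Sph^2$. Note also that the hypothesis that the metric is of the \emph{first kind} is essential here (with a funnel end the geodesic representatives of the $c_n$ would not shrink and the lift would accumulate on an arc of the circle), and your proof never invokes it. Your skeleton for the second part (straighten both lifts onto the geodesic $g$, project, apply Epstein) is the standard one and survives once this convex-exhaustion input replaces the horoballs --- indeed a horoball is precisely such a convex region meeting the circle in one point, so the correct argument is the generalization of what you did at $\infty$.
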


\subsection{Coding of some rays}\label{partie codage}

\subsubsection*{Equator}

Using Proposition \ref{prop 3.5}, we choose a topological circle $\E$ of $\Sph^2$ which contains $K \cup \{ \infty\}$ and so that all the open segments of $\E - (K\cup \{\infty\})$ are geodesics for the previous metric on $\Sph^2 - (K\cup \{ \infty\})$. We call \emph{equator} this circle. We choose an orientation on the equator. We call \emph{northern hemisphere} the topological disk on the left of the equator, and \emph{southern hemisphere} the topological disk on its right.

\begin{figure}[!h]
\labellist
\small\hair 2pt
\pinlabel $\infty$ at -13 122
\pinlabel $s_{-1}$ at 26 174
\pinlabel $s_0$ at 42 80
\pinlabel ${p}_{-1}$ at -5 215
\pinlabel ${p}_0$ at 57 24
\pinlabel $s_1$ at 104 24
\pinlabel $s_2$ at 187 16
\pinlabel $s_3$ at 263 59
\pinlabel $s_4$ at 298 140
\pinlabel ${p}_1$ at 150 -12
\pinlabel ${p}_2$ at 251 11
\pinlabel ${p}_3$ at 316 87
\pinlabel ${p}_4$ at 332 186
\pinlabel $p$ at 199 327
\pinlabel \textit{Northern hemisphere} at 137 248
\pinlabel \textit{Southern hemisphere} at -77 248
\endlabellist
\centering
\includegraphics[scale=0.5]{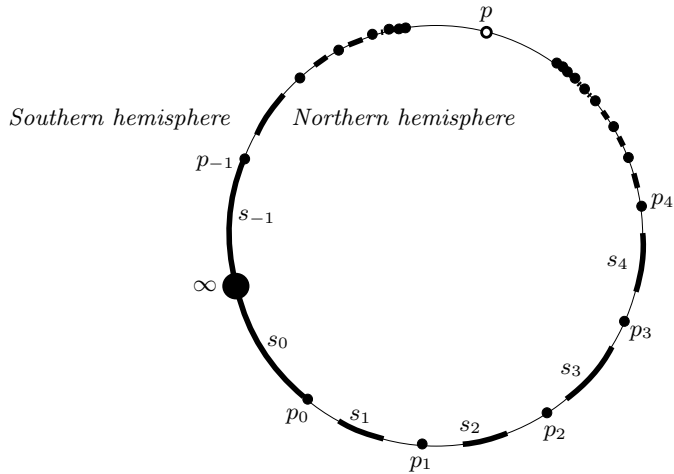}
\vspace{0.5cm}
\caption{Choice of an equator, a point $p$, a sequence of points in $K$, and a set of segments.}
\label{figu:segments}
\end{figure}

\subsubsection*{Choice of segments of $\E$}

As in Figure \ref{figu:segments}, we choose a point $p$ in $\E - \{\infty\}$ such that the two connected components of $\E - \{\infty,p\}$ both contains points of $K$. We then choose a sequence $(p_n)_{n\in \N}$ of points of $K$ in the connected component of $\E - \{\infty,p\}$ which is on the right of $\infty$, in such a way so that ${p}_0$ is the first point of $K$ on the right of $\infty$ on $\E$, and $p_{n+1}$ is on the right of $p_n$ for all $n\in \N$. We choose a sequence $(p_n)_{n<0}$ in the same way in the connected component of $\E - \{\infty,p\}$ which is on the left of $\infty$, such that ${p}_{-1}$ is the first point on the left of $\infty$, and such that $p_{n-1}$ is on the left of $p_n$ for all $n <0$. We denote by $s_0$ the connected component of $\E - (K\cup \{\infty\})$ between $\infty$ and $p_0$, and by $s_{-1}$ the one between $\infty $ and $p_{-1}$. For all $n>0$, we choose a connected component $s_n$ of $\E-K$ between $p_{n-1}$ and $p_n$, and for all $n<-1$, a connected component $s_n$ of $\E - K$ between $p_{n}$ and $p_{n+1}$.

We denote by $S$ the set of topological segments $\{s_n\}_{n\in \Z}$, and by $\textbf{S}$ their union $\bigcup_{n\in \Z} s_n$.

\subsubsection*{Associated sequence}

\begin{figure}[!h]
\labellist
\small\hair 2pt
\pinlabel $\infty$ at 51 182
\pinlabel ${p}_0$ at 116 84
\pinlabel ${p}_1$ at 213 50
\pinlabel ${p}_2$ at 310 72
\pinlabel ${p}_3$ at 384 157
\pinlabel ${p}_4$ at 394 247
\pinlabel $\gamma$ at 3 50
\pinlabel \textit{North} at 164 332
\pinlabel \textit{South} at 64 332
\endlabellist
\centering
\includegraphics[scale=0.5]{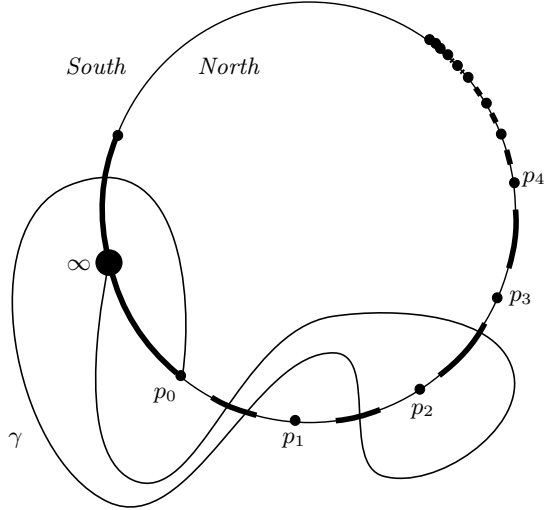}
\vspace{0.3cm}
\caption{Example of a ray $\gamma \in X_S$: in this example, the Cantor-endpoint is ${p}_0$, the complete associated sequence of segments is $u(\gamma)=s_1s_3s_2s_1s_{-1}({p}_0)$, and we have $\mathring u(\gamma)=s_1s_3s_2s_1s_{-1}$.}
\label{exemple-suite}
\end{figure}

If $\alpha$ is an isotopy class of arcs of $\Sph^2-(K \cup \{\infty\})$, we denote by $\alpha_\#$ the unique geodesic representative arc of $\alpha$ in $\Sph^2- (K \cup \{\infty\})$. We denote by $X'_S$ the set of isotopy classes of arcs $\alpha$ in $\Sph^2-(K \cup \{\infty\})$ between infinity and a point of the Cantor set (possibly with self-intersection) such that:
\begin{enumerate}
\item $\E \cap \alpha_\# \subset \textbf{S}$ ;
\item The connected component $\alpha_\#  - \E$  which starts at $\infty$ is included in the southern hemisphere ;
\item $\E \cap \alpha_\#$ is a finite set.
\end{enumerate}

We denote by $X_S$ the subset of $X'_S$ which contains the isotopy classes of \emph{simple} arcs (i.e. the set of rays which satisfy the three previous conditions).

Let $\alpha \in X'_S$. We can associate to $\alpha$ a sequence of segments in the following way: we follow $\alpha_\#$ from $\infty$ and to its Cantor-endpoint, and we denote by $u_1$ the first segment of $S$ intersected by $\alpha_\#$, $u_2$ the second, ..., and $u_k$ the k-th for all $k$, until we get to the Cantor-endpoint.

We denote by $\mathring u(\alpha)$ this (finite) sequence of segments, and by $u(\alpha)$ the sequence $\mathring u(\alpha)$ together with the Cantor-endpoint of $\alpha$. We call it \emph{complete sequence associated to $\alpha$} (see Figure \ref{exemple-suite} for an example). Because the geodesic $\alpha_\#$ in the isotopy class $\alpha$ is unique, the sequence associated to $\alpha$ is well defined. More generally, we will call \emph{complete sequence} any finite sequence of segments together with a point of $K$, such that the sequence of segments does not begin with $s_{-1}$ nor $s_0$, and does not contain any repetition of segments (i.e. $u_i \neq u_{i+1}$ for all $i$), in order to avoid bigons.

\begin{lemma}
To each complete sequence corresponds a unique isotopy class of arcs in $X'_S$ (possibly with self-intersections) between infinity and a point of $K$. In particular, if two rays of $X_S$ have the same complete associated sequence, then they are equal.
\end{lemma}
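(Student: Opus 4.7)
The plan is as follows. The second statement follows at once from the first, since every ray in $X_S$ is in particular a class in $X'_S$; so I focus on showing that each complete sequence $(u_1,\ldots,u_k,p)$ determines a unique class in $X'_S$.

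For existence, I would build a representative arc $a$ directly: start at $\infty$, enter the southern hemisphere, successively cross the equator at arbitrarily chosen points of $u_1, u_2, \ldots, u_k$ (switching hemispheres at each crossing), and terminate at $p$. Such an arc exists because each open hemisphere is a topological disk containing no point of $K\cup\{\infty\}$ in its interior, so any two prescribed boundary marks can be joined by a simple arc through the disk. To conclude that $a \in X'_S$, I must check that its geodesic straightening $a_\#$ produces the same sequence of crossings. This reduces to showing that $a$ has no bigon with any segment $s_n$. The only possible bigons arise (i) between two consecutive crossings of the same segment, ruled out by $u_i \neq u_{i+1}$, or (ii) at $\infty$, between the initial endpoint and a first crossing on $s_0$ or $s_{-1}$, ruled out by the opening condition on the sequence.

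For uniqueness, suppose $\alpha,\beta\in X'_S$ share the same complete sequence; I would pass to the universal cover $\mathbb{H}^2 \to \Sph^2 - (K\cup\{\infty\})$. By Proposition~\ref{prop 3.5}, the segments $s_n$ and the representatives $\alpha_\#,\beta_\#$ may simultaneously be taken geodesic, so the full preimage of $\bigcup_n s_n$ forms a locally finite, pairwise disjoint family of complete geodesics in $\mathbb{H}^2$. These cut $\mathbb{H}^2$ into regions, each a lift of one of the two hemispheres, and the adjacency dual of this decomposition is a tree (by simple connectedness of $\mathbb{H}^2$). Crucially, since each hemisphere downstairs carries exactly one copy of each segment $s_n$ on its boundary, every region in the cover has exactly one adjacent edge of each label. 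Fix a lift $\tilde\infty$ of $\infty$; the lifts of $\alpha_\#, \beta_\#$ starting at $\tilde\infty$ both begin in the unique lift of the southern hemisphere incident to $\tilde\infty$, and proceed along a walk in the tree whose consecutive edge labels are $u_1,\ldots,u_k$. Since this walk is uniquely determined by its label sequence, both lifts traverse the same edges in the same order, ending at the same lift of $p$ on $\partial\mathbb{H}^2$. Proposition~\ref{prop-hyp} then yields $\alpha = \beta$.

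The main subtlety, appearing in both parts, is the precise interplay between the combinatorial conditions on a complete sequence and the geometric absence of bigons: the prohibition of $u_1\in\{s_0,s_{-1}\}$ and of consecutive repetitions is exactly what forbids bigons in the constructed arc, and equivalently what guarantees that the walk in the tree used in the uniqueness step never backtracks, so that the label sequence determines the walk unambiguously.
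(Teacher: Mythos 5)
Your uniqueness argument has the same skeleton as the paper's proof (lift both arcs from a common $\tilde\infty$, tessellate the universal cover, observe that the label sequence determines the walk through the tessellation, and conclude with Proposition \ref{prop-hyp}), but as written it contains a step that fails. You cut $\mathbb{H}^2$ along the preimage of $\bigcup_n s_n$ only, and claim the complementary regions are lifts of the hemispheres, each adjacent to exactly one edge of each label. That is false: the family $S=\{s_n\}_{n\in\Z}$ is a \emph{proper} subfamily of the components of $\E-(K\cup\{\infty\})$ (between $p_{n-1}$ and $p_n$ there are infinitely many components of $\E-K$, of which only one is chosen as $s_n$), so the complement of $\mathbf{S}=\bigcup_n s_n$ in $\Sph^2-(K\cup\{\infty\})$ is \emph{connected} --- the two hemispheres are glued to each other along every unchosen component --- and it has nontrivial fundamental group. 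Hence each complementary region upstairs is an infinite covering space of this connected set, not a lift of a hemisphere, and it is adjacent to infinitely many lifts of every $s_n$. Your key claim ``every region has exactly one adjacent edge of each label'' therefore fails, the walk is no longer determined by its label sequence, and you cannot conclude that the two lifts share their terminal point. The repair is exactly what the paper does: tessellate along the preimage of the \emph{entire} equator $\E-(K\cup\{\infty\})$. Then each region is a disk mapping homeomorphically onto a hemisphere, whose boundary carries exactly one lift of each equator component (in particular of each $s_n$) and exactly one lift of each puncture; with that change your tree/walk/common-endpoint argument becomes the paper's proof.

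On the existence half: note that the paper's proof does not address it at all (only injectivity of the coding is proved, and only that is used later, via the ``In particular'' sentence), so this part is entirely your addition, and its key step is also not right. First, a bigon between $a$ and some $s_n$ need not join crossings that are consecutive along $a$; ruling out the remaining configurations requires an argument (for instance, an embedded bigon forces every other segment to meet its $a$-side an even number of times, which excludes many but not all patterns). Second, and more seriously, there is a reduction at the Cantor-endpoint that the definition of complete sequence does not forbid: $s_1({p}_0)$ is a complete sequence, yet any arc that crosses $s_1$ once and then ends at ${p}_0$ is isotopic rel endpoints to the arc coded by the empty sequence with endpoint ${p}_0$ --- in the universal cover both lifts terminate at the ideal endpoint of the relevant lift of $s_1$, which is the unique lift of ${p}_0$ adjacent to both regions involved --- so no class of $X'_S$ has $s_1({p}_0)$ as its coding. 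Thus surjectivity onto complete sequences, read literally, fails whenever the terminal point is an endpoint of the last segment; only the uniqueness/injectivity statement (the one the paper proves and uses) holds in this generality.
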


\begin{proof} Let $\alpha$ and $\beta$ be two arcs with the same complete associated sequence, let say $u_0...u_n({p}_j)$. On the boundary of the universal cover, we choose a "lift" $\tilde \infty$ of $\infty$: we can see this point $\tilde \infty$ as the limit on the boundary of any chosen lift of $\alpha$. We then lift $\beta$ from that point $\tilde \infty$. The universal cover is tessellated by half fundamental domains corresponding to the lifts of the hemispheres: the boundary of any of these half fundamental domains is a lift of the equator. We start to lift $\alpha$ and $\beta$ from $\tilde \infty$ in a same half fundamental domain $F_0$ (which is a lift of the southern hemisphere). We define $(F_i)_{0\leq i\leq n}$ as the sequence of the alternative lifts of the northern and southern hemispheres that are crossed by $\tilde \alpha_\#$. Observe that $(F_i)_i$ is determined by the coding: we go out $F_0$ to arrive in a lift $F_1$ of the northern hemisphere by crossing the only lift of $u_0$ which is in the closure of $F_0$. We keep going this way until we reach the half-domain $F_n$, which has only one lift $\tilde {p}_j$ of ${p}_j$ in its boundary. Thus the two lifts $\tilde \alpha$ and $\tilde \beta$ of $\alpha$ and $\beta$ have the same endpoints. It follows that $\alpha$ and $\beta$ are isotopic in $\Sph^2 - (K \cup \{\infty\})$ (according to Proposition \ref{prop-hyp}). \end{proof}

From now on, we won't make any distinction between an isotopy class of arcs in $X'_S$ and its complete associated sequence.

\subsection{A specific sequence of rays}
We construct here a specific sequence of rays, $(\alpha_k)_{k\in \N}$, whose properties will be useful later. \\

If $u=u_0u_1...u_n ({p}_j)$ is a complete sequence of segments, recall that we denote by \mbox{$\mathring u = u_0 u_1 ...u_n$} the sequence of segments without the Cantor-endpoint. The inverse of this finite sequence of segments will be denoted by \mbox{$\mathring u^{-1}:=u_n...u_1 u_0$}.

\begin{figure}[h!]
\labellist
\small\hair 2pt
\pinlabel $\infty$ at 60 195
\pinlabel $\infty$ at 297 251
\pinlabel $s_{-1}$ at -11 187
\pinlabel $s_0$ at 123 187
\pinlabel $s_{-1}$ at 227 245
\pinlabel $s_0$ at 360 245
\pinlabel $s_{-1}$ at 363 171
\pinlabel $s_1$ at 123 133
\pinlabel $s_{-1}$ at 125 80
\pinlabel $s_1$ at 123 27
\pinlabel $s_{-1}$ at 363 65
\pinlabel $s_1$ at 360 208
\pinlabel $s_{2}$ at 25 34
\pinlabel $s_1$ at 360 106
\pinlabel $s_{k+1}$ at 260 34
\pinlabel $s_k$ at 360 26

\pinlabel ${p}_1$ at 58 18
\pinlabel ${p}_2$ at 3 18
\pinlabel ${p}_k$ at 296 17
\pinlabel ${p}_{k+1}$ at 240 17

\pinlabel \textit{North} at -15 214
\pinlabel \textit{South} at -15 163
\pinlabel \textit{North} at -15 109
\pinlabel \textit{South} at -15 58
\pinlabel \textit{North} at -15 7

\pinlabel \textit{South} at 230 226
\pinlabel \textit{North} at 230 189

\pinlabel \textit{South} at 230 46

\pinlabel $\alpha_1$ at 69 106
\pinlabel $\alpha_2$ at 66 217
\pinlabel $\alpha_k$ at 307 130
\pinlabel $\alpha_{k+1}$ at 304 272
\endlabellist
\centering

\includegraphics[scale=0.8]{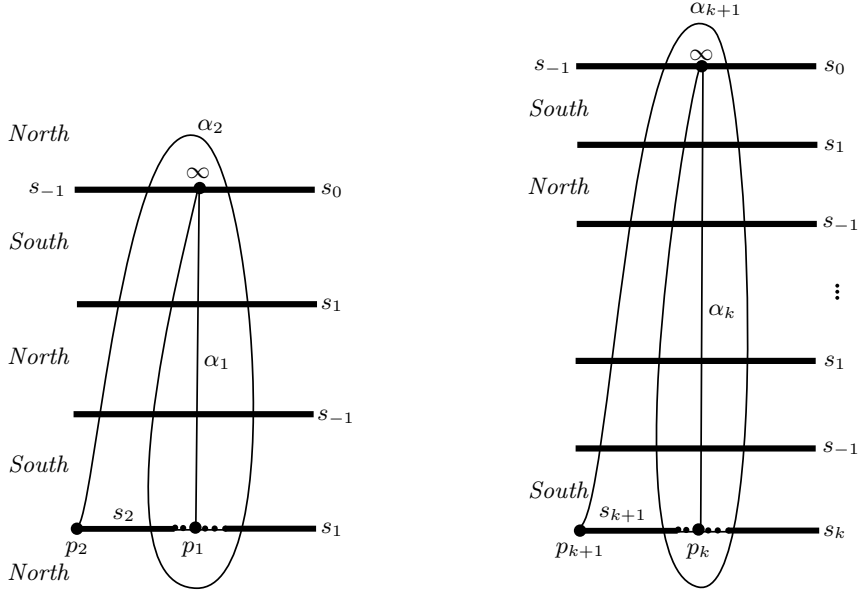}
\caption{Definition of $\alpha_2$ from $\alpha_1$, and of $\alpha_{k+1}$ from $\alpha_k$: representation of the local intersections of these rays with $\E$.}
\label{def-alpha}
\end{figure}

\begin{definition} We define the sequence of rays $(\alpha_k)_{k\geq 0}$ in the following way:
\begin{itemize}
\item $\alpha_0$ is the isotopy class of the segment $s_0$, with endpoints $\infty$ and ${p}_0$;
\item $\alpha_1$ is the ray coded by $s_1 s_{-1} ({p}_1)$ (see Figure \ref{alpha1-2});
\item For all $k\geq 1$, $\alpha_{k+1}$ is the ray defined from $\alpha_k$ as in Fi\-gure~\ref{def-alpha}: we start at $\infty$, we follow $\alpha_{k\#}$ until its Cantor-endpoint ${p}_k$ in a tubular neighborhood of $\alpha_{k\#}$, we turn around this point counterclockwise, crossing the two segments $s_{k+1}$ first, and then $s_k$, we follow $\alpha_{k\#}$ again in a tubular neighborhood, we turn around $\infty$ by crossing $s_0$ first and then $s_{-1}$, we follow $\alpha_{k\#}$ for the last time in a tubular neighborhood until its Cantor-endpoint, and we go to the point ${p}_{k+1}$ of the Cantor set without crossing the equator.
\end{itemize}
\end{definition}

In other words, using the coding, we can define $(\alpha_k)_{k\geq 0}$ with the following complete sequences:
\begin{itemize}
\item $\alpha_0 =s_0({p}_0)$;
\item $\alpha_1 = s_1 s_{-1} ({p}_1)$;
\item $\alpha_{k+1}=\mathring \alpha_k s_{k+1} s_k \mathring \alpha_k^{-1} s_0 s_{-1} \mathring \alpha_k ({p}_{k+1})$ for all $k \geq 1$.
\end{itemize}

\begin{figure}[!h]
\labellist
\small\hair 2pt
\pinlabel $\infty$ at 82 206
\pinlabel ${p}_0$ at 125 102
\pinlabel ${p}_1$ at 225 62
\pinlabel ${p}_2$ at 329 92
\pinlabel ${p}_3$ at 389 164
\pinlabel ${p}_4$ at 404 257
\pinlabel \textit{North} at 154 332
\pinlabel \textit{South} at 64 332
\endlabellist
\centering
\vspace{0.4cm}
\includegraphics[scale=0.43]{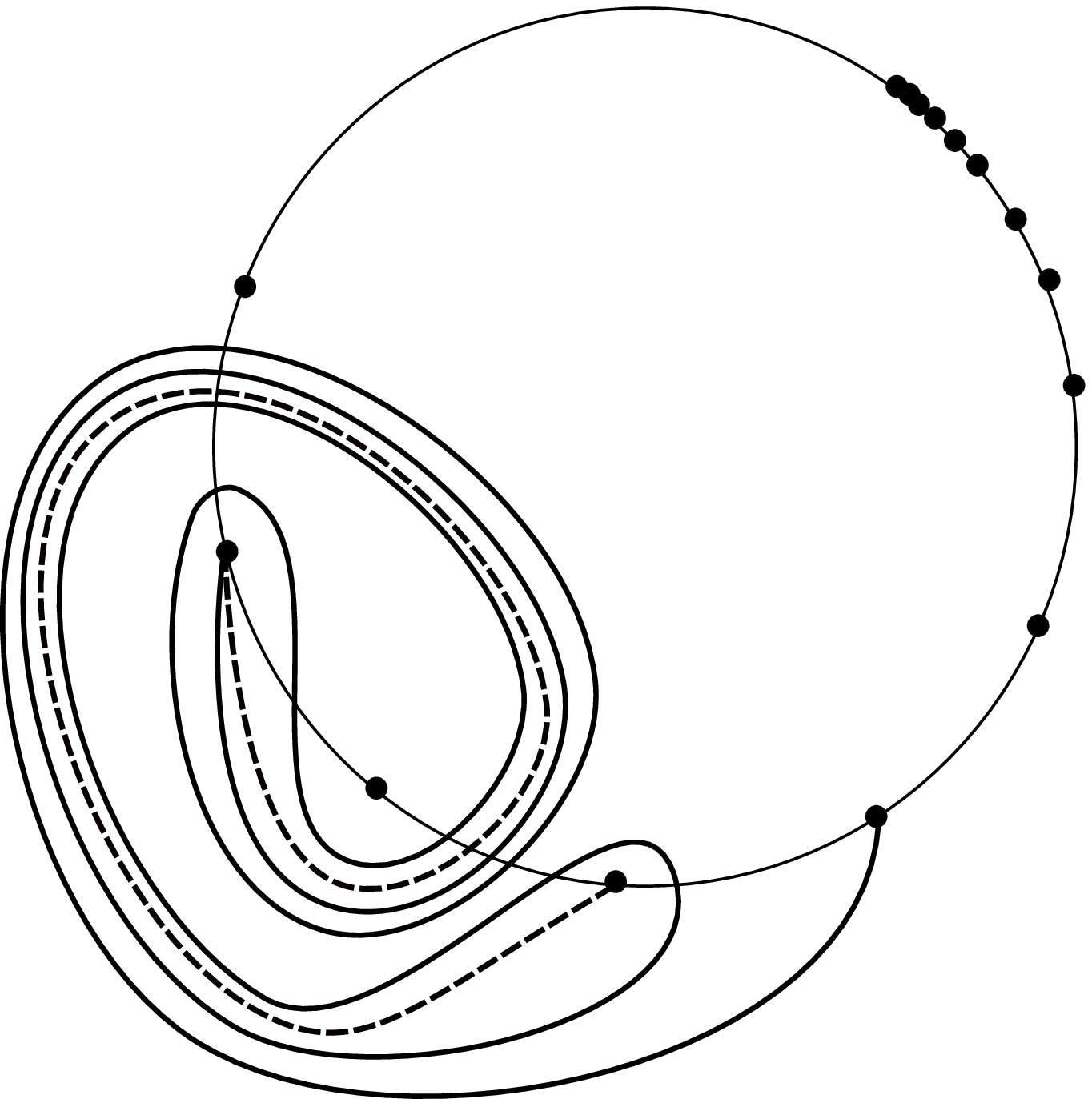}
\caption{On the sphere, representation of $\alpha_1= s_1 s_{-1} ({p}_1)$ (dotted line) and \mbox{$\alpha_2=s_1s_{-1} s_2s_1s_{-1} s_1s_0s_{-1} s_1s_{-1}({p}_2)$} (plain line).}
\label{alpha1-2}
\end{figure}

\begin{remark} If we denote by $\texttt{long}(\alpha_k)$ the number of connected components of \mbox{$\alpha_{k\#} - \E$,} then $\texttt{long}(\alpha_k)$ is odd for all $k \geq 1$. Indeed, we have $\texttt{long}(\alpha_1)=3$ (see Figure \ref{def-alpha}), and by construction $\texttt{long}(\alpha_{k+1}) = 3 \texttt{ long}(\alpha_k)+2$, thus $\texttt{long}(\alpha_{k+1})$ has the same parity than $\texttt{long}(\alpha_k)$. Hence we are sure to be in the configuration of Figure \ref{def-alpha}: the last hemisphere crossed by $\alpha_k$ is always the southern hemisphere, thus ${p}_{k+1}$ is always on the left of ${p}_{k}$ in the local representation that we have chosen (Figure \ref{def-alpha}). In particular, when $\alpha_{k+1}$ turn around $\infty$, this ray crosses first $s_0$ and then $s_{-1}$, to avoid self-intersections.
\end{remark}

\subsection{Infinite diameter and half geodesic axis} \label{def de A}

Let $\beta$ be a ray, and let $\mathring u = u_0 u_1...u_n$ be a sequence of segments. We say that $\beta$ \emph{begins like $\mathring u$} if the first connected component of $\beta_ \# - \E$ is in the southern hemisphere and if the first intersections between $\beta_\#$ and $\E$ are, in this order, the segments $u_0,u_1,...,u_n$. In particular, if $\beta \in X_S$, we say that $u(\beta)$ begins like $\mathring u$.

\begin{definition}
Let $A:X_r \rightarrow \N$ be the map which sends every ray $\gamma \in X_r$ to:
$$A(\gamma):=\max\{i\in \N  \emph{ such that } \gamma  \emph{ begins like } \mathring \alpha_i\}.$$
\end{definition}

\noindent Because $\mathring \alpha_0$ is the empty sequence, $A$ is well defined for all $\gamma \in X_r$. Let us now prove that $A$ is $1$-lipschitz.

\begin{lemma} \label{lemme distance}
Let $\beta$ and $\gamma$ be two rays such that $d(\gamma,\beta)=1$. Then~: $$|A(\gamma) - A(\beta)| \leq 1.$$
\end{lemma}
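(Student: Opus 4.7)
My plan is to argue by contradiction. Assume without loss of generality that $A(\gamma) \geq A(\beta) + 2$ and set $n := A(\beta)$; I aim to force $\beta$ to begin like $\mathring \alpha_{n+1}$, contradicting the maximality in the definition of $A(\beta)$.

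The first observation I would need is a prefix property: $\mathring \alpha_k$ is an initial subword of $\mathring \alpha_m$ whenever $1 \leq k \leq m$. This is immediate by induction from the recursion $\mathring \alpha_{k+1} = \mathring \alpha_k \cdot s_{k+1} s_k \cdot \mathring \alpha_k^{-1} \cdot s_0 s_{-1} \cdot \mathring \alpha_k$, since $\mathring \alpha_{k+1}$ literally starts with $\mathring \alpha_k$. Hence $A(\gamma) \geq n+2$ implies that $\gamma$ begins not only like $\mathring \alpha_{A(\gamma)}$ but also like its prefix $\mathring \alpha_{n+2}$. The whole argument then reduces to the following key \emph{tube claim}, which is precisely the mechanism sketched in the introduction: if a ray $\delta$ begins like $\mathring \alpha_{k+1}$ (for some $k \geq 1$), then every ray homotopically disjoint from $\delta$ begins like $\mathring \alpha_k$. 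Applied to $\delta = \gamma$ with $k = n+1$, this forces $A(\beta) \geq n+1$, the desired contradiction.

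To prove the tube claim, I would simultaneously replace $\gamma$ and $\beta$ by their geodesic representatives via Proposition~\ref{prop 3.5}, and arrange them to be disjoint. The initial portion of $\gamma_\#$ coding the prefix $\mathring \alpha_{k+1}$ runs, in a thin neighborhood of $\alpha_{k\#}$, from $\infty$ out along $\alpha_{k\#}$ to ${p}_k$, loops around ${p}_k$ via the crossings $s_{k+1} s_k$, returns along $\alpha_{k\#}$ to $\infty$, loops around $\infty$ via $s_0 s_{-1}$, and heads back out along $\alpha_{k\#}$ one more time. A suitable subarc of this portion, closed off by short arcs of the equator, bounds a topological disk $D \subset \Sph^2$ whose interior contains $\alpha_{k\#}$ and meets $K$ only at ${p}_k$. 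Since $\beta_\#$ is disjoint from $\gamma_\#$, it cannot cross the portion of $\partial D$ coming from $\gamma_\#$; since $\beta_\#$ must leave $\infty$ into the southern hemisphere (condition (2) in the definition of $X'_S$), an analysis of the complement of $\gamma_\#$ near this tube shows that $\beta_\#$ is trapped into running parallel to $\alpha_{k\#}$ and must reproduce the equator intersection sequence $\mathring \alpha_k$ before it can diverge.

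I expect the hard part to be making this tube argument rigorous: verifying that the two successive windings coded by $\mathring \alpha_{k+1}$ genuinely enclose $\alpha_{k\#}$ in an embedded disk separating ${p}_k$ from the rest of $K$, and checking that no ray disjoint from $\gamma_\#$ can reach $K$ from $\infty$ without first replicating $\mathring \alpha_k$. The cleanest route is probably to work in the universal cover: by Proposition~\ref{prop-hyp}, the coding of an arc is read off from the sequence of half-fundamental-domains crossed by a lift, and the disjointness of the geodesic representatives downstairs translates into a combinatorial constraint on the lifts of $\beta_\#$, forcing their initial sequence of half-domains to coincide with that of a lift of $\alpha_{k\#}$.
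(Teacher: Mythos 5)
Your proposal is essentially the paper's own proof: your ``tube claim'' (a ray disjoint from one beginning like $\mathring \alpha_{k+1}$ must itself begin like $\mathring \alpha_k$) is exactly the mechanism of the paper's argument, where the initial portion of the geodesic representative coding $\mathring\alpha_{k+1}$ bounds a grey region containing no point of $K$, so that any disjoint ray is forced to run through the tube and reproduce the crossing sequence $\mathring\alpha_k$; your reorganization (contradiction from $A(\gamma)\geq A(\beta)+2$, plus the explicit prefix property from the recursion) versus the paper's direct estimate $A(\gamma)\geq A(\beta)-1$ followed by symmetry in $\beta$ and $\gamma$ is purely cosmetic. The one point to repair is your appeal to condition (2) of $X'_S$ for the disjoint ray $\beta$: since $\beta$ is an arbitrary ray of $X_r$, its starting into the southern hemisphere is part of the \emph{conclusion} ``$\beta$ begins like $\mathring\alpha_k$'' rather than a hypothesis --- it is automatic when $A(\beta)\geq 1$, and in the remaining case it follows because a geodesic ray leaving $\infty$ into the northern hemisphere would be confined to the pocket of the grey region north of $\infty$ and could only escape by crossing $s_0$ or $s_{-1}$ there, creating a bigon (with corner at $\infty$) with a geodesic segment of the equator, which is impossible for geodesic representatives in minimal position.
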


\begin{proof}
Let $n:=A(\beta)$. We denote by $\beta_\#$ and  $\gamma_\#$ the geodesic representatives of $\beta$ and $\gamma$ (see Figure \ref{prop-alpha}). The arc $\beta_\#$ first follows the curve representing $\mathring \alpha_n$: indeed, it has to cross the same segments, in the same order. There exists a homeomorphism fixing each point of $K$ and $\infty$, preserving $\E$, and sending the beginning of $\beta_\#$, i.e. the component of $\beta_\#$ between $\infty$ and $s_{-1}$, on the beginning of $\alpha_n$, i.e. on the component of $\alpha_n$ between $\infty$ and $s_{-1}$. Because the distance between $\gamma$ and $\beta$ is $1$, $\gamma_\#$ is disjoint from $\beta_\#$ and has to escape the grey area, which does not contain any point of $K$, in order to end on a point of $K$ without intersecting $\beta_\#$, thus without crossing the plain line of Figure \ref{prop-alpha}. It follows that $\gamma_\#$ has to begin by following one of the two dotted arrows, which exactly means that $\gamma$ begins like $\mathring \alpha_{n-1}$. Thus we get $A(\gamma)\geq n-1$. The Lemma follows by symmetry between $\beta$ and $\gamma$. \end{proof}

\begin{figure}[!h]
\labellist
\small\hair 2pt
\pinlabel $\mathring \alpha_n$ at 90 219
\pinlabel $\infty$ at 62 253
\pinlabel $s_{-1}$ at 127 65
\pinlabel $s_{-1}$ at -10 245
\pinlabel $s_0$ at 127 245
\pinlabel $s_1$ at 127 210
\pinlabel ${p}_n$ at 2 15
\pinlabel ${p}_{n-1}$ at 70 15
\endlabellist
\centering
\includegraphics[scale=0.53]{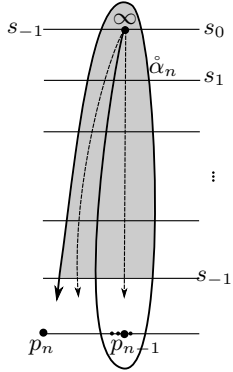}
\caption{Representation of the local intersections of $\mathring \alpha_{n}$ with $\E$. By definition of $(\alpha_k)_k$, there is no point of $K$ in the grey area.}
\label{prop-alpha}
\end{figure}

\begin{corollary} \label{dist}
Let $\beta$ and $\gamma$ be any two rays of $X_r$. We have: $$|A(\beta)-A(\gamma)| \leq d(\beta,\gamma).$$
\end{corollary}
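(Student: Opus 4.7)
The plan is to derive this directly from Lemma \ref{lemme distance} by a straightforward induction along a geodesic path in the ray graph.

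Since $X_r$ is connected (as noted in the Remark after the definition of the ray graph), the distance $d(\beta,\gamma)$ is finite. Let $n := d(\beta,\gamma)$ and choose a geodesic edge-path $\beta = \gamma_0, \gamma_1, \ldots, \gamma_n = \gamma$, so that $d(\gamma_{i-1},\gamma_i) = 1$ for each $i = 1, \ldots, n$. Applying Lemma \ref{lemme distance} to each consecutive pair $(\gamma_{i-1},\gamma_i)$ gives $|A(\gamma_i) - A(\gamma_{i-1})| \leq 1$.

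The triangle inequality for absolute values then yields
\[
|A(\beta) - A(\gamma)| = \Bigl|\sum_{i=1}^{n} \bigl(A(\gamma_{i-1}) - A(\gamma_i)\bigr)\Bigr| \leq \sum_{i=1}^{n} |A(\gamma_{i-1}) - A(\gamma_i)| \leq n = d(\beta,\gamma),
\]
which is exactly the claim.

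There is no real obstacle here: the entire content is already in Lemma \ref{lemme distance}, and the corollary is the standard chaining of a $1$-Lipschitz estimate along a geodesic. The only thing to check is that such a geodesic path exists, which follows from connectedness of $X_r$ recalled in the remark.
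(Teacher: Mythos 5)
Your proof is correct and follows exactly the paper's argument: choose a geodesic between $\beta$ and $\gamma$, apply Lemma \ref{lemme distance} to each consecutive pair of vertices, and conclude by sub-additivity of the absolute value. You have merely written out the telescoping sum that the paper leaves implicit, so there is nothing to add.
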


\begin{proof}
We choose a geodesic in the ray graph between $\beta$ and $\gamma$, and by sub-additivity of the absolute value, we deduce this result from Lemma \ref{lemme distance}. \end{proof}

\noindent This inequality allows us to minor some distances, and the following theorem follows: 

\begin{theo}\label{theo-infini}
The diameter of the ray graph is infinite.
\end{theo}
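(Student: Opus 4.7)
The plan is to exhibit an explicit sequence of rays whose pairwise distances blow up, using Corollary~\ref{dist} as the main tool.

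First I would observe that Corollary~\ref{dist} already reduces the problem to producing rays with arbitrarily large values of $A$. The natural candidates are the rays $\alpha_k$ themselves: by construction, each $\alpha_k$ begins like $\mathring \alpha_k$, so from the very definition of $A$ we get $A(\alpha_k) \geq k$. I would record this as a short preliminary remark.

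Next, I would pick a reference ray $\beta$ with $A(\beta) = 0$; the simplest choice is $\beta = \alpha_0 = s_0(p_0)$, whose geodesic representative is the segment $s_0$ itself. Since $\alpha_0$ does not begin like $\mathring \alpha_1 = s_1 s_{-1}$ (its first intersection with $\E$ is the endpoint $p_0$, and in particular it never crosses $s_1$), one has $A(\alpha_0) = 0$. Applying Corollary~\ref{dist} to $\beta = \alpha_0$ and $\gamma = \alpha_k$ gives
\[
d(\alpha_0, \alpha_k) \;\geq\; |A(\alpha_k) - A(\alpha_0)| \;\geq\; k,
\]
so the distances from $\alpha_0$ to the $\alpha_k$'s are unbounded, proving that $X_r$ has infinite diameter.

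There is really no hard step here: all the work has been front-loaded into the construction of the sequence $(\alpha_k)_k$ and the $1$-Lipschitz property of $A$ (Lemma~\ref{lemme distance}). The only thing to double-check is the trivial claim $A(\alpha_0) = 0$, which follows immediately because $\mathring \alpha_1$ starts with the segment $s_1$ while $\alpha_{0\#} = s_0$ never meets $s_1$. The theorem then drops out in two lines.
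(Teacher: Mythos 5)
Your proof is correct and is essentially the same as the paper's: both use the sequence $(\alpha_k)_k$, the observation that $A(\alpha_0)=0$ while $A(\alpha_k)\geq k$, and Corollary~\ref{dist} to conclude $d(\alpha_0,\alpha_k)\geq k$. Your version is, if anything, slightly more careful in spelling out why $A(\alpha_0)=0$ and in noting that the inequality $A(\alpha_k)\geq k$ suffices (the paper asserts equality $A(\alpha_n)=n$, which is not needed).
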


\begin{proof} By definition of $A$, we have $A(\alpha_0)=0$ and $A(\alpha_n)=n$ for all $n\in \N$. According to Corollary \ref{dist}, we have $d(\alpha_0,\alpha_n)\geq n$.
\end{proof}

\begin{prop} \label{alpha_k geod}
The half-axis $(\alpha_k)_{k\in \N}$ is geodesic.
\end{prop}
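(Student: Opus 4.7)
The plan is to combine the lower bound on distances furnished by Corollary \ref{dist} with a direct construction of disjoint representatives of consecutive $\alpha_k$'s.

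First I would verify that $A(\alpha_n) = n$ for every $n \in \N$. The inductive definition gives the complete sequence
$$\alpha_{n+1} = \mathring{\alpha}_n\, s_{n+1}\, s_n\, \mathring{\alpha}_n^{-1}\, s_0\, s_{-1}\, \mathring{\alpha}_n\, (p_{n+1}),$$
so a trivial induction shows that $\alpha_n$ begins like $\mathring{\alpha}_k$ for every $k \leq n$; hence $A(\alpha_n) \geq n$. Conversely, $A(\alpha_n) < n+1$ because the complete sequence $u(\alpha_n)$ terminates at $p_n$ immediately after its initial copy of $\mathring{\alpha}_n$, while $\mathring{\alpha}_{n+1}$ continues with $s_{n+1}s_n\ldots$. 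Thus $A(\alpha_n) = n$ exactly.

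Next, Corollary \ref{dist} immediately yields the lower bound
$$d(\alpha_m,\alpha_n) \;\geq\; |A(\alpha_m) - A(\alpha_n)| \;=\; |m-n|$$
for all $m,n \in \N$.

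For the matching upper bound it suffices to show $d(\alpha_k,\alpha_{k+1}) \leq 1$; the triangle inequality then gives $d(\alpha_m,\alpha_n) \leq |m-n|$. This is where I would use the construction itself rather than the coding: $\alpha_{k+1}$ was defined by following $\alpha_{k\#}$ inside an arbitrarily thin tubular neighborhood, turning counterclockwise around $p_k$ by first crossing $s_{k+1}$ and then $s_k$, following $\alpha_{k\#}$ back, turning around $\infty$ through $s_0$ and $s_{-1}$, following $\alpha_{k\#}$ a last time, and finally leaving the neighborhood to reach $p_{k+1}$. Since all crossings with $\E$ occur in segments disjoint from the interior of $\alpha_{k\#}$, and the tubular neighborhood can be chosen to contain no point of $K$ other than $p_k$, one obtains representatives of $\alpha_k$ and $\alpha_{k+1}$ whose interiors are disjoint. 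Hence $\alpha_k$ and $\alpha_{k+1}$ are joined by an edge in $X_r$.

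Combining the two bounds yields $d(\alpha_m,\alpha_n) = |m-n|$, which is precisely the statement that $(\alpha_k)_{k\in \N}$ is a geodesic half-axis. The only delicate point is the disjointness in the upper bound; this is forced by the parity observation in the preceding remark (that the last hemisphere crossed by $\alpha_k$ is always the southern one), which guarantees that the configuration of Figure \ref{def-alpha} is genuinely realized and that the turning around $\infty$ via $s_0$ then $s_{-1}$ avoids a self-intersection with the tubular copy of $\alpha_k$.
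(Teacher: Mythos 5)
Your proof is correct and follows essentially the same route as the paper: the lower bound $d(\alpha_m,\alpha_n)\geq|m-n|$ comes from Corollary \ref{dist} (after checking $A(\alpha_n)=n$), and the upper bound comes from the disjointness of consecutive rays built into the construction, combined via the triangle inequality. The paper states this more tersely (only for the pairs $(\alpha_0,\alpha_k)$, with $A(\alpha_n)=n$ and $d(\alpha_k,\alpha_{k+1})=1$ asserted "by construction"), so your write-up simply supplies details the paper leaves implicit.
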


\begin{proof}
By construction of the sequence $(\alpha_k)_{k\in \N}$, we have $d(\alpha_k,\alpha_{k+1})=1$ for all $k\geq 0$. We also have $d(\alpha_k,\alpha_0)\geq k$ for all $k\geq 0$ (according to Corollary \ref{dist}). Thus for all $k \geq 0$, we have $d(\alpha_k,\alpha_0)=k$.\end{proof}

\section{Gromov-hyperbolicity of the ray graph }\label{section2}

Recall that a metric space $X$ is \emph{geodesic} if between any two points of $X$, there exists at least one geodesic, i.e. a path which minimizes the distance between these two points. We recall the definition of Gromov-hyperbolic metric space. For more details on hyperbolic spaces, see for example Bridson \& Haefliger \cite{Bridson-Haefliger}.

\begin{definition}[Hyperbolic space]
We say that a geodesic metric space $X$ is \emph{Gromov-hyperbolic} (or \emph{hyperbolic}) if there exists a constant $\delta\geq 0$ such that for every geodesic triangle of $X$, each side of the triangle is included in the $\delta$-neighborhood of the two other sides.
\end{definition}

We define a graph $X_\infty$ and we prove that it is Gromov-hyperbolic using the same arguments that those given by Hensel, Przytycki \& Webb in \cite{HPW} to prove the Gromov-hyperbolicity of the arc graph for compact surfaces with boundary. We then use this hyperbolicity to prove that the ray graph is Gromov-hyperbolic.

\subsection{Hyperbolicity of the loop graph $X_\infty$}

\subsubsection*{Graph $X_\infty$ and unicorn paths}

We fix a Cantor set $K$ embedded in $\R^2$ and we compactify $\R^2 $ by adding $\infty$: we get the sphere $\Sph^2$.

A simple arc of $\Sph^2-K$ between infinity and infinity (i.e. a simple loop based at infinity) is said to be \emph{essential} if it does not bound a topological disk, meaning that it splits the sphere into two components which both contain points of $K$.

\begin{definition}
We define the graph $X_\infty$ as follow:
\begin{itemize}
\item Vertices are isotopy classes of simple essential arcs on $\Sph^2 - K$ between $\infty$ and $\infty$, where we identify two arcs if they have the same image but opposite orientation;
\item Two vertices are joined by an edge if and only if they are homotopically disjoint.
\end{itemize}
\end{definition}

\begin{remark} We recall that we denote by $X_r$ the ray graph. The graphs $X_\infty$ and $X_r$ are naturally equipped with a metric where all the edges have length $1$. The group $\Gamma = \MCG(\R^2 - K)$ acts on $X_\infty$ (and on $X_r$) by isometries.
\end{remark}

We adapt here the proof from \cite{HPW} of the hyperbolicity of the arc graph of a compact surface with boundary to prove the hyperbolicity of $X_\infty$.

Let $a$ and $b$ be two simple essential arcs on $\Sph^2 - K$ between $\infty$ and $\infty$ and which are in minimal position. We choose an orientation on each of these arcs and we denote by  $a^+$, $b^+$ the corresponding oriented arcs. Let $\pi \in a\cap b$. Let $a'$ and $b'$ be the oriented subarcs of $a$, respectively $b$, beginning like $a$, respectively like $b$, and having $\pi$ for second endpoint. We denote by $a' \star b'$ the concatenation of these two subarcs; in particular, it is an arc between $\infty$ and $\infty$. Assume that this arc is simple. Because $a$ and $b$ are in minimal position, the arc $a' \star b'$ is essential. Thus it defines an element of $X_\infty$. The arc $a' \star b'$ is said to be \emph{a unicorn arc made from $a^+$ and $b^+$}.

Observe that this arc is uniquely determined by the choice of $\pi \in a\cap b$, and that not all the points of $a \cap b$ define a simple arc. Moreover, $a \cap b$ is a finite set, because $a$ and $b$ have transverse intersections (and $\infty$ is an isolated puncture). Those there are only finitely many unicorn arcs made from $a^+$ and $b^+$.

\begin{claim}
If $\pi$ and $\pi'$ are two points of $a\cap b$ defining the unicorn arcs $a'\star b'$ and $a'' \star b''$, then $a'' \subset a'$ if and only if $b' \subset b''$.
\end{claim}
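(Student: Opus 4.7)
The plan is to argue by contradiction, using only the simplicity of the two unicorn arcs $a'\star b'$ and $a''\star b''$. First I would dispose of the trivial case $\pi=\pi'$, in which $a'=a''$ and $b'=b''$ and the equivalence holds automatically. In the remaining case, $a'$ and $a''$ are both initial subarcs of the simple arc $a$ starting at $\infty$, with distinct endpoints $\pi,\pi'$, so exactly one of them is strictly contained in the other; the same dichotomy holds for $b',b''$ as initial subarcs of $b$. Consequently the stated ``iff'' reduces to showing that $\pi'$ strictly precedes $\pi$ along $a^+$ \emph{if and only if} $\pi$ strictly precedes $\pi'$ along $b^+$.

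For the forward direction, I would assume $a''\subsetneq a'$ and suppose for contradiction that also $b''\subsetneq b'$. Then $\pi'$ lies in the interior of both $a'$ and $b'$. Since $a$ and $b$ cross transversely at $\pi'$, the concatenation $a'\star b'$ passes through $\pi'$ once along its $a'$-portion and a second time along its (reversed) $b'$-portion, creating a self-intersection at $\pi'$. This contradicts the hypothesis that $a'\star b'$ is a simple arc, so $b'\subset b''$, as required.

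The reverse implication is perfectly symmetric: assuming $b'\subsetneq b''$ together with $a'\subsetneq a''$ would place $\pi$ in the interior of both $a''$ and $b''$, and transversality at $\pi$ would force a self-intersection of $a''\star b''$ at $\pi$, contradicting its simplicity. This yields $a''\subset a'$ and completes the equivalence.

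There is essentially no hard step; the main (really only) point to watch is orientational bookkeeping, to verify that a point of $a\cap b$ lying in the interior of both $a'$ and $b'$ genuinely produces a self-intersection of $a'\star b'$ rather than being the hinge where $a'$ meets the reverse of $b'$. This is immediate once one recalls that the hinge is $\pi$, not $\pi'$, and that $a$ and $b$ meet transversely at every point of $a\cap b$. The rest is a direct unwinding of the definition of unicorn arc.
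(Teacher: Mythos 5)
Your proof is correct; the paper itself states this claim without proof (it is the observation, borrowed from Hensel--Przytycki--Webb, that makes the total order on unicorn arcs well defined), and your argument --- reducing to the relative positions of $\pi$ and $\pi'$ along $a^+$ and $b^+$, and extracting a self-intersection of a unicorn arc from the forbidden configuration --- is exactly the intended one. One cosmetic remark: transversality at $\pi'$ is not actually needed, since a point of $a\cap b$ interior to both $a'$ and $b'$ is visited at two distinct parameters of $a'\star b'$, which by itself contradicts injectivity; equivalently, simplicity of $a'\star b'$ amounts to $a'\cap b'=\{\pi\}$, and your bad configuration exhibits $\pi'\in a'\cap b'$ with $\pi'\neq\pi$.
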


\begin{definition}[Total order on unicorn arcs]
Let $a^+$ and $b^+$ be two oriented arcs between $\infty$ and $\infty$ on $\Sph^2 -K$, and which are in minimal position. We order the unicorn arcs between $a^+$ and $b^+$ in the following way: \\
$a' \star b' \leq a'' \star b''$ if and only if $a'' \subset a'$ and $b' \subset b''$. 
\end{definition}

This order is total. We denote by $(c_1,...,c_{n-1})$ the ordered set of the unicorn arcs between $a^+$ and $b^+$. In particular, this order corresponds to the order of the intersection points $\pi$ when we follow $b^+$.

We define unicorn paths in $X_\infty$ in the following way:
\begin{definition}[Unicorn paths between oriented arcs] Let $a^+$ and $b^+$ be two oriented essential simple arcs between $\infty$ and $\infty$ on $\Sph^2 -K$, and which are in minimal position.
The sequence of unicorn arcs $P(a^+,b^+)=(a=c_0,c_1,...,c_{n-1},c_n=b)$ is called  \emph{unicorn path between $a^+$ and $b^+$}.
\end{definition}

\begin{claim} Let $a$ and $b$ be two oriented arcs in minimal position and let $(c_0,...,c_n)$ be the unicorn path between these two arcs. Let $a'$ and $b'$ be two arcs in minimal position and such that $a'$, respectively $b'$, is isotopic to $a$, respectively to $b$, and oriented in the same direction. We denote by $(d_0,d_1,...,d_{m-1},d_m)$ the unicorn path between $a'$ and $b'$. Then $n=m$ and $c_k$ is isotopic to $d_k$ for all $k$.\end{claim}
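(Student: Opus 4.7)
The plan is to produce a homeomorphism $\phi$ of $\Sph^2$, fixing $K \cup \{\infty\}$ pointwise and isotopic to the identity rel $K \cup \{\infty\}$, with $\phi(a^+) = (a')^+$ and $\phi(b^+) = (b')^+$ as oriented arcs. Once this is achieved, the claim will follow almost formally. Indeed, such a $\phi$ induces a bijection $a \cap b \to a' \cap b'$, $\pi \mapsto \phi(\pi)$, and if $a_\pi, b_\pi$ denote the subarcs of $a, b$ used to build the unicorn arc at $\pi$, then $\phi(a_\pi) \star \phi(b_\pi)$ is, by definition, the unicorn arc for $(a')^+, (b')^+$ at the point $\phi(\pi)$. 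Since $\phi$ is a homeomorphism, the former is simple if and only if the latter is, so $\phi$ restricts to a bijection between the simple unicorn arcs of the two pairs; in particular $n = m$.

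To identify $\phi(c_k)$ with $d_k$ I would check that $\phi$ preserves the total order on unicorn arcs. Recall that this ordering corresponds to the order in which the points $\pi$ are met along $b^+$; since $\phi$ sends $b^+$ to $(b')^+$ as oriented curves, it preserves this order, hence preserves the ordering of simple unicorn arcs. Therefore $\phi(c_k)$ is the $k$-th simple unicorn arc for $((a')^+, (b')^+)$, i.e.\ $\phi(c_k) = d_k$. Because $\phi$ is isotopic to the identity rel $K \cup \{\infty\}$, the image $\phi(c_k)$ is moreover isotopic to $c_k$, which gives $c_k \simeq d_k$ for every $k$.

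The construction of $\phi$ is the only non-formal step, and it is where I expect the real work to lie. I would apply Proposition~\ref{prop 3.5} with $\mathcal{A} = \{a\}$ and $\mathcal{B} = \{b\}$ to obtain a homeomorphism $h_1$ isotopic to the identity rel $K \cup \{\infty\}$ such that $h_1(a)$ and $h_1(b)$ are geodesic. Similarly, applying the proposition to $\{a'\}$ and $\{b'\}$ yields $h_2$ isotopic to the identity with $h_2(a'), h_2(b')$ geodesic. Each isotopy class of simple arcs in $\Sph^2 - (K \cup \{\infty\})$ contains a unique geodesic representative, and orientations on the class lift canonically to that representative; combined with the hypotheses $a \simeq a'$ and $b \simeq b'$ as oriented arcs, this gives $h_1(a^+) = h_2((a')^+)$ and $h_1(b^+) = h_2((b')^+)$. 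Setting $\phi := h_2^{-1} \circ h_1$ then produces the desired homeomorphism, and the rest of the argument is purely combinatorial.
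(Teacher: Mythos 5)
Your proposal is correct and takes essentially the same route as the paper: the paper's entire proof is the remark that the claim is a consequence of Proposition \ref{prop 3.5}, and your argument (applying that proposition to both pairs, invoking uniqueness of geodesic representatives to obtain a homeomorphism isotopic to the identity rel $K\cup\{\infty\}$ carrying $(a^+,b^+)$ to $((a')^+,(b')^+)$, then transporting the unicorn construction and its ordering through this homeomorphism) is precisely the natural fleshing-out of that one-line proof. No gaps.
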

This is a consequence of Proposition \ref{prop 3.5}. \qed

\begin{definition}[Unicorn paths between oriented vertices of $X_\infty$] 
Let $\alpha^+$ and $\beta^+$ be two oriented elements of $X_\infty$. Let $a^+$ and $b^+$ be two oriented representatives of $\alpha^+$ and $\beta^+$ respectively, and which are in minimal position. Let $P(a^+,b^+)=(c_0,...,c_n)$ be the associated unicorn path. For all $1\leq k \leq n$, we denote by $\gamma_k$ the isotopy class of $c_k$. We define the \emph{unicorn path between $\alpha^+$ and $\beta^+$} by: $$P(\alpha^+,\beta^+)=(\gamma_0,\gamma_1,...,\gamma_n).$$
\end{definition}

\begin{claim} Every unicorn path is a path in $X_\infty$. \end{claim}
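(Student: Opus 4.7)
The plan is to show that for each $0 \leq k < n$, the consecutive vertices $\gamma_k$ and $\gamma_{k+1}$ of the unicorn path have disjoint representatives, so they are joined by an edge in $X_\infty$. Fix minimal-position oriented representatives $a^+, b^+$ and write the unicorn arcs as $c_k = a'_k \star b'_k$ with $\pi_k \in a \cap b$ the associated intersection point. The order relation $c_k \leq c_{k+1}$ gives $a'_{k+1} \subset a'_k$ and $b'_k \subset b'_{k+1}$, so I may decompose
\[
c_k = [\infty,\pi_{k+1}]_a \cup [\pi_{k+1},\pi_k]_a \cup [\pi_k,\infty]_b, \qquad c_{k+1} = [\infty,\pi_{k+1}]_a \cup [\pi_{k+1},\pi_k]_b \cup [\pi_k,\infty]_b,
\]
where the subscripts indicate which of $a^+$ or $b^+$ the subarc follows. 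Thus $c_k$ and $c_{k+1}$ coincide on the two outer pieces and differ only on the middle segment between $\pi_{k+1}$ and $\pi_k$, one following $a^+$, the other following $b^+$. The boundary cases $k=0$ and $k=n-1$ fit in by allowing one of $a'_\bullet, b'_\bullet$ to be the whole of $a$ or $b$ and the other degenerate to a point at $\infty$.

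Next, I would construct an isotopic representative $\tilde c_k$ of $c_k$ that is disjoint from $c_{k+1}$ by a push-off argument. Keep $c_{k+1}$ as representative of $\gamma_{k+1}$; on the two shared pieces $[\infty,\pi_{k+1}]_a$ and $[\pi_k,\infty]_b$, perturb $c_k$ into a thin tubular neighborhood of $c_{k+1}$, choosing the side at $\pi_{k+1}$ and at $\pi_k$ dictated by the local transverse crossing of $a$ and $b$ so that the push-off matches up correctly with the untouched middle segment $[\pi_{k+1},\pi_k]_a$. Near $\pi_{k+1}$ and $\pi_k$ one rounds the corners inside the tubular neighborhood, so the resulting $\tilde c_k$ is a simple arc isotopic to $c_k$ whose intersection with $c_{k+1}$ is empty on the two outer regions.

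The main obstacle is checking that the untouched middle segment $[\pi_{k+1},\pi_k]_a$ does not meet the corresponding middle segment $[\pi_{k+1},\pi_k]_b$ of $c_{k+1}$, apart from their common endpoints. Any such interior intersection would be a point $\pi \in a \cap b$ lying strictly between $\pi_{k+1}$ and $\pi_k$ both along $a^+$ and along $b^+$. Using that $c_k$ and $c_{k+1}$ are both simple, one checks that the concatenation $[\infty,\pi]_a \star [\pi,\infty]_b$ is also simple, so it defines a unicorn arc $c'$ with $c_k < c' < c_{k+1}$ in the total order, contradicting the fact that $c_k$ and $c_{k+1}$ are consecutive. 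Hence the middle segments are disjoint in their interiors, and combined with the push-off on the shared parts this gives $\tilde c_k \cap c_{k+1} = \emptyset$. Therefore $\gamma_k$ and $\gamma_{k+1}$ are joined by an edge in $X_\infty$, and the unicorn path is indeed a path.
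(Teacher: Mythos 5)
Your overall strategy (identify the two shared outer pieces, show the two middle segments are disjoint, then push off) is sound, and it is essentially the proof of Remark $3.2$ of \cite{HPW}, which the paper itself invokes with a one-line citation. However, there is a genuine gap at the crucial step. You claim that for \emph{any} interior intersection point $\pi$ of the middle segments $[\pi_{k+1},\pi_k]_a$ and $[\pi_{k+1},\pi_k]_b$, simplicity of $c_k$ and $c_{k+1}$ forces the concatenation $[\infty,\pi]_a \star [\pi,\infty]_b$ to be simple. This is not true. Simplicity of $c_k$ and $c_{k+1}$ does rule out intersections of $[\infty,\pi]_a$ with the shared outer $b$-piece and of $[\infty,\pi]_b$ with the shared outer $a$-piece, but it says nothing about a \emph{second} intersection point $x$ of the two middle segments. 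If such an $x$ precedes $\pi$ along both $a^+$ and $b^+$, then $x$ lies on both $[\infty,\pi]_a$ and $[\infty,\pi]_b$, so your concatenation passes through $x$ twice and is not simple; it then defines no unicorn arc, and the contradiction evaporates. Minimal position (absence of bigons) does not exclude two crossings of the middle segments occurring in the same order along both arcs, so the "check" cannot be completed as stated.

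The missing idea is an extremal choice of $\pi$: among all points of $a\cap b$ lying in the interiors of both middle segments, take $\pi$ to be the one closest to $\pi_k$ along $b^+$. Then a would-be self-intersection point $x$ of $[\infty,\pi]_a \star [\pi,\infty]_b$ must lie either in a configuration excluded by the simplicity of $c_k$ or $c_{k+1}$ (your cases), or in the interiors of both middle segments and strictly before $\pi$ along $b^+$, contradicting minimality. With this choice the intermediate unicorn arc exists, satisfies $c_k < c' < c_{k+1}$, and contradicts consecutiveness, exactly as you intended; the rest of your argument (the decomposition into shared and middle pieces, the boundary cases $k=0$, $k=n-1$, and the push-off in a regular neighborhood) is fine.
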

Indeed, for all \mbox{$0\leq i\leq n-1$}, $c_i$ and $c_{i+1}$ are homotopically disjoint. This is Remark $3.2$ of \cite{HPW}. \qed

\begin{remark} 
\begin{enumerate}
\item If $a\cap b=\emptyset$, then we have $P(a^+,b^+)=(a,b)$.
\item Abusing notations, we will again denote by $P(a^+,b^+)$ the set of the elements of $P(a^+,b^+)$.
\end{enumerate}
\end{remark}

The unicorn arcs only depend on the neighborhood of $a \cup b$: if we consider a closed neighborhood of $a \cup b$ which homotopically equivalent to $a \cup b$, then we can see the unicorn arcs made from $a$ and $b$ as unicorn arcs of the compact surface defined by the closure of this neighborhood. We are then in the exact same setting than in \cite{HPW}. This correspondence allows us to see every unicorn path in $X_\infty$ as a unicorn path in the arc graph of a finite type surface. In particular, Lemmas $3.3$, $3.4$, $3.5$ and $4.3$ of \cite{HPW} are still true for $X_\infty$. As Proposition $4.2$, and Theorem $1.2$ are consequences of these Lemmas, we get the hyperbolicity of $X_\infty$. Note that it seems difficult to deduce the hyperbolicity of $X_\infty$ from the hyperbolicity of the arc graph of only one surface: in the proof of the lemmas, we need to consider different surfaces, which depend on the elements of $X_\infty$ that we are considering. But because the constant of hyperbolicity given by \cite{HPW} does not depend of the surface, this won't be an issue. We adapt the proof of \cite{HPW} in our context. Lemma \ref{slim unicorn}, Corollary \ref{lemme4.3}, Lemma \ref{lemme3.5} and Propositions \ref{prop4.2} and \ref{G_infini hyp} correspond, in this order, to Lemmas $3.3$, $4.3$, $3.5$, Proposition $4.2$ and Theorem $1.2$ of \cite{HPW}\footnote{We choose not to translate the proofs of these Lemmas and Propositions, which are very close to the proofs written in \cite{HPW}, and which have been checked in this particular setting (in french!) in the original paper.}.\\

Note that the proof of \cite{HPW} does not adapt directly to the ray graph $X_r$: indeed, the arc made from two different representatives of rays oriented from infinity to the Cantor-endpoint goes from infinity to infinity, and thus is not in the ray graph. If we change the definition by choosing the unicorn arc as the union of the beginning of $a$ and the end of $b$, then we get an arc whose isotopy class is a ray, but Lemma \ref{slim unicorn} is false. This is why we defined the graph $X_\infty$.

\subsubsection*{Lemmas on unicorn paths in $X_\infty$}

\begin{lemma}[Unicorn triangles are $1$-slim] \label{slim unicorn}
Let $\alpha^+, \beta^+$ and $\delta^+$ be three elements of $X_\infty$, with an orientation. Then for all $\gamma \in P(\alpha^+,\beta^+)$, one of the elements $\gamma^*$ of $P(\alpha^+,\delta^+) \cup P(\delta^+,\beta^+)$ is such that $d(\gamma,\gamma^*)=1$ in $X_\infty$.
\end{lemma}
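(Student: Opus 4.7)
The plan is to adapt the proof of \cite[Lemma 3.3]{HPW} to the loop graph $X_\infty$. Using Proposition~\ref{prop 3.5}, fix pairwise minimal-position geodesic representatives $a$, $b$, $d$ of $\alpha^+$, $\beta^+$, $\delta^+$ with their chosen orientations, after a small perturbation ensuring no triple points in $a\cap b\cap d$. An element $\gamma \in P(\alpha^+,\beta^+)$ is represented by a simple essential arc $c=a'\star b'$ with pivot $\pi\in a\cap b$, where $a'$ (respectively $b'$) is the initial subarc of $a^+$ (respectively $b^+$) from $\infty$ to $\pi$. Simplicity of $c$ forces $a'\cap b'=\{\pi\}$.

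Next, parameterize $d^+$ starting from $\infty$ and let $\sigma$ be the first point at which $d^+$ meets the compact set $a'\cup b'$. If no such $\sigma$ exists, then $d$ is already disjoint from $c$, so $\delta$ is homotopically disjoint from $\gamma$ and we take $\gamma^*=\delta \in P(\alpha^+,\delta^+)$. Otherwise $\sigma\neq \pi$ by the no-triple-point assumption, so either $\sigma\in a'\setminus\{\pi\}$ or $\sigma\in b'\setminus\{\pi\}$. In the first case let $a''$ and $d''$ denote the initial subarcs of $a^+$ and $d^+$ from $\infty$ to $\sigma$, so that $a''\subsetneq a'$. By the choice of $\sigma$, the open arc $d''$ is disjoint from $a'\cup b'$; in particular $a''$ and $d''$ meet only at $\sigma$, so $c^*:=a''\star d''$ is a simple essential arc representing a vertex $\gamma^*\in P(\alpha^+,\delta^+)$. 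The second case is symmetric and yields $\gamma^*\in P(\delta^+,\beta^+)$ with a subarc of $b^+$ playing the role of $a''$.

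It remains to check that $d(\gamma,\gamma^*)=1$ in $X_\infty$. The arcs $c$ and $c^*$ coincide along $a''$, while their remaining parts $(a'\setminus a'')\cup (b'\setminus\{\pi\})$ and $d''\setminus\{\sigma\}$ are disjoint by the choice of $\sigma$, and $a''$ itself is disjoint from $b'$ because $a'\cap b'=\{\pi\}$ and $\pi\notin a''$. A small push-off of $a''$ to the side of $a$ opposite to the initial direction of $d''$ at $\sigma$ produces a simple arc isotopic to $c^*$ and disjoint from $c$, establishing the required edge of $X_\infty$. The main technical obstacle is this final push-off at the pivot $\sigma$ together with the verification that $c^*$ is essential (it does not bound a disk disjoint from $K$); both points follow, exactly as in \cite{HPW}, from $a$ and $d$ being in minimal position.
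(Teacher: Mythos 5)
Your proof is correct and is essentially the same argument the paper relies on: the paper's proof of Lemma \ref{slim unicorn} is simply a citation of Lemma $3.3$ of \cite{HPW} (justified by viewing a regular neighborhood of $a\cup b\cup d$ as a compact surface with boundary), and your write-up inlines exactly that first-intersection-point unicorn construction in the setting of $X_\infty$, with $\infty$ playing the role of the marked endpoints. The one step to phrase with care is the push-off at $\sigma$: the parallel copy of $a''$ must lie on the side of $a$ from which $d''$ arrives at $\sigma$, so that it closes up with $d''$ without crossing $c$ — which is what your ``opposite to the initial direction of $d''$'' amounts to.
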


\begin{proof}
%Soient $a,b,d$ des représentants géodésiques de $\alpha,\beta,\delta$. Soit $c \in P(a^+,b^+)$~: il existe $a'$ et $b'$ sous-arcs respectifs de $a$ et $b$ tels que $c=a' \star b'$. Si $c$ est disjoint de $d$, $\gamma^*=\delta$ convient. Sinon, soit $d'\subset d$ le sous arc maximal commençant comme $d^+$ et disjoint de $c$. Soit $\sigma \in c$ l'autre extrémité de $d'$. Le point $\sigma$ divise $c$ en deux sous-arcs, dont l'un est contenu dans $a'$ ou $b'$, disons $a'$ (le cas $b'$ est analogue). On note $a''$ ce sous-arc. Alors $c^*:=a'' \star d'$ est un terme de $P(a^+,d^+)$. De plus, $c$ et $c^*$ sont homotopiquement disjoints. 
This is Lemma $3.3$ of \cite{HPW}. \end{proof}

\begin{corollary}\label{lemme4.3}
Let $k\in \N$, $m\leq 2^k$ and let $(\xi_0,...,\xi_m)$ be a path in $X_\infty$. We choose an orientation on the $\xi_i$'s. Then $P(\xi_0^+,\xi_m^+)$ is included in the $k$-neighborhood of $(\xi_0,...,\xi_m)$.
\end{corollary}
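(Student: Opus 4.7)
The plan is to prove the statement by induction on $k$, using a dyadic midpoint subdivision of the path $(\xi_0, \ldots, \xi_m)$ together with the $1$-slim property of unicorn triangles (Lemma \ref{slim unicorn}). This is the standard hyperbolicity bootstrap: each application of the slim-triangle lemma costs one unit of distance, and halving the combinatorial length of the path $m$ times allows paths of length up to $2^k$ to be handled in $k$ steps.

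For the base case $k=0$, we have $m \leq 1$. If $m=0$ there is nothing to show. If $m=1$, the vertices $\xi_0$ and $\xi_1$ are joined by an edge in $X_\infty$, so they admit disjoint representatives in minimal position, and by the first point of the Remark following the definition of unicorn paths $P(\xi_0^+,\xi_1^+)=(\xi_0,\xi_1)$, which clearly lies in the $0$-neighborhood of $(\xi_0,\xi_1)$.

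For the inductive step, suppose the statement holds for $k$, and let $(\xi_0,\ldots,\xi_m)$ be a path with $m\leq 2^{k+1}$, each $\xi_i$ oriented. Set $j=\lfloor m/2\rfloor$, so that both subpaths $(\xi_0,\ldots,\xi_j)$ and $(\xi_j,\ldots,\xi_m)$ have combinatorial length at most $2^k$. I apply Lemma \ref{slim unicorn} to the oriented triple $(\xi_0^+,\xi_m^+,\xi_j^+)$: for every $\gamma\in P(\xi_0^+,\xi_m^+)$, there exists $\gamma^\ast\in P(\xi_0^+,\xi_j^+)\cup P(\xi_j^+,\xi_m^+)$ with $d(\gamma,\gamma^\ast)\leq 1$. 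By the induction hypothesis applied to each half, $\gamma^\ast$ lies in the $k$-neighborhood of either $(\xi_0,\ldots,\xi_j)$ or $(\xi_j,\ldots,\xi_m)$, hence in the $k$-neighborhood of $(\xi_0,\ldots,\xi_m)$. Then the triangle inequality yields $d(\gamma,(\xi_0,\ldots,\xi_m))\leq k+1$, completing the induction.

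There is no real obstacle here beyond bookkeeping: the essential input is Lemma \ref{slim unicorn}, and the only delicate point is making sure the orientation of the midpoint $\xi_j$ used to invoke that lemma is available — which it is, since orientations on all $\xi_i$ were fixed in the hypothesis. The indexing $m\leq 2^{k+1}$ with $\lfloor m/2\rfloor\leq 2^k$ and $m-\lfloor m/2\rfloor\leq 2^k$ is exactly what makes the halving argument close up cleanly.
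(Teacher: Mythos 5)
Your proof is correct and is essentially the paper's own argument: the paper proves this corollary by citing Lemma 4.3 of \cite{HPW}, whose proof is exactly this dyadic induction on $k$, splitting the path at a midpoint and applying the $1$-slimness of unicorn triangles (Lemma \ref{slim unicorn}) once per halving. Your handling of the base case via the remark $P(a^+,b^+)=(a,b)$ for disjoint representatives, and of the index bookkeeping $\lfloor m/2\rfloor\leq 2^k$, $\lceil m/2\rceil\leq 2^k$, matches that argument.
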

\begin{proof}
%Soit $\gamma \in P(\xi_0^+,\xi_m^+)$. Montrons qu'il existe $i$ tel que $d(\gamma,\xi_i)\leq k$. En appliquant le lemme \ref{slim unicorn} aux sommets $\xi_0^+$, $\xi_m^+$ et $\xi_{E(m/2)}^+$ (où $E(\cdot)$ désigne la partie entière), on obtient $\gamma_1^* \in P(\xi_0^+,\xi_{E(m/2)}^+) \cup P(\xi_{E(m/2)}^+,\xi_m^+)$ tel que $d(\gamma,\gamma_1^*)=1$. On note $(\alpha_1^+,\beta^+_1)$ le couple $(\xi_0^+,\xi_{E(m/2)}^+)$ ou $(\xi_{E(m/2)}^+,\xi_m^+)$ tel que $\gamma_1^* \in P(\alpha_1^+,\beta^+_1)$. \\
%On applique alors le lemme \ref{slim unicorn} aux éléments $\alpha_1^+$, $\beta_1^+$ et $\xi_l^+$, où $l$ est choisi de telle sorte que $\xi_l$ est au milieu de $\alpha_1$ et $\beta_1$ sur le chemin $(\xi_0,...,\xi_m)$. On a alors $\gamma_2^* \in P(\alpha_1^+,\xi_l^+) \cup P(\xi_l^+,\beta_1^+)$ tel que $d(\gamma_1^*,\gamma_2^*)=1$, et donc $d(\gamma,\gamma_2^*)\leq 2$. On continue ainsi par récurrence en choisissant à chaque fois un élément $\xi_j$ au milieu des deux éléments concernés par le chemin unicorne précédent, et on finit par trouver $\gamma^*=\xi_i$ tel que $d(\gamma,\gamma_*)\leq k$.

This is Lemma $4.3$ of \cite{HPW}. \end{proof}

\begin{lemma} \label{lemme3.5}
Let $\alpha^+,\beta^+ \in X_\infty$ be oriented and let $P(\alpha^+,\beta^+)=(\gamma_0,...,\gamma_n)$ be the associated unicorn path in $X_\infty$. For all $0\leq i \leq j \leq n$, we consider $P(\gamma_i^+,\gamma_j^+)$, where $\gamma_i^+$, respectively $\gamma_j^+$, have the same orientation than $a^+$, respectively than $b^+$. Then either $P(\gamma_i^+,\gamma_j^+)$ is a subpath of $P(\alpha^+,\beta^+)$, or $j=i+2$ and $d(\gamma_i,\gamma_j)=1$ in~$X_\infty$.
\end{lemma}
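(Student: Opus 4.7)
The plan is to reduce the statement to Lemma 3.5 of \cite{HPW}, which proves the exact same dichotomy for unicorn paths in the arc graph of a finite-type compact surface with boundary. This is the reduction strategy already announced in the remark preceding Lemma \ref{slim unicorn}: all arcs involved live in a neighborhood of $a \cup b$, so the arguments of \cite{HPW} apply unchanged.

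First, I would fix minimal-position representatives $a, b$ of $\alpha^+, \beta^+$ in $\Sph^2 - K$ and realize each $\gamma_k$ as the concrete unicorn arc $c_k = a_k' \star b_k'$ based at an intersection point $\pi_k \in a \cap b$. Let $N$ be a closed regular neighborhood of $a \cup b \cup \{\infty\}$ in $\Sph^2 - K$: this is a compact surface with boundary, and each connected component of $\Sph^2 \setminus N$ contains at least one point of $K$ (otherwise $a$ or $b$ would bound a disk in $\Sph^2 - K$, contradicting their essentiality). Consequently $N$ is $\pi_1$-injective in $\Sph^2 - K$: for arcs contained in $N$, isotopy and homotopic disjointness in $N$ coincide with their counterparts in $\Sph^2 - K$. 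The unicorn path $P(a^+, b^+)$ computed in the arc graph of $N$ is then vertex-by-vertex identical to $P(\alpha^+, \beta^+) = (\gamma_0, \ldots, \gamma_n)$; and after choosing minimal-position representatives of $c_i$ and $c_j$ in $\Sph^2 - K$ by small perturbations supported in $N$, the unicorn path $P(c_i^+, c_j^+)$ in $X_\infty$ coincides with the unicorn path of the same name in the arc graph of $N$.

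Applying Lemma 3.5 of \cite{HPW} in the arc graph of $N$ to the sub-pair $(c_i, c_j)$ of the unicorn path $P(a^+, b^+)$ then yields the desired dichotomy directly: either $P(c_i^+, c_j^+)$ in $N$ is a subpath of $P(a^+, b^+)$, or $j = i + 2$ and $c_i, c_j$ are disjoint in $N$. Transferring via the identifications above gives the statement of Lemma \ref{lemme3.5}, with $d(\gamma_i, \gamma_j) = 1$ in $X_\infty$ in the exceptional case. The main technical point is the $\pi_1$-injectivity of $N$ and the resulting identification of unicorn paths in $N$ with those in $X_\infty$; this is the only substantive verification, and it reduces to the fact that each complementary region of $N$ meets $K$.
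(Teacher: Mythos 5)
Your overall strategy --- realize the unicorn data inside a compact subsurface containing $a \cup b$ and invoke Lemma 3.5 of \cite{HPW} there --- is exactly the paper's proof, which consists of that citation together with the preceding remark that unicorn arcs only depend on a neighborhood of $a \cup b$. However, your execution has a genuine gap at the step you yourself single out as the only substantive verification. It is \emph{not} true that every connected component of $\Sph^2 \setminus N$ must contain a point of $K$, and the justification you give (``otherwise $a$ or $b$ would bound a disk, contradicting essentiality'') does not apply: a complementary region of $a \cup b$ is in general bounded by several subarcs of \emph{both} arcs, not by $a$ or $b$ alone. Minimal position only forbids bigons, i.e.\ $K$-free disk regions with two corners; $K$-free disk regions with four or more corners are perfectly compatible with essentiality and minimal position, and they occur in abundance --- for instance when $b$ is the image of $a$ under a power of a Dehn twist about a curve crossing $a$, the annulus where the twisting takes place is disjoint from $K$ and the complement of $a \cup b$ there contains empty rectangles. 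When such regions exist, the regular neighborhood $N$ is not $\pi_1$-injective: non-isotopic arcs of $N$ can become isotopic in $\Sph^2 - (K \cup \{\infty\})$, and, more damagingly, the concrete unicorn arcs $c_i, c_j$ can be in minimal position inside $N$ while bounding a bigon of $\Sph^2 - (K \cup \{\infty\})$ that runs through a $K$-free complementary region. In that case $P(\gamma_i^+,\gamma_j^+)$, which by definition is computed from minimal-position representatives in $\Sph^2 - (K \cup \{\infty\})$, need not coincide with the unicorn path computed in $N$, so the dichotomy furnished by \cite{HPW} inside $N$ does not formally transfer to $X_\infty$.

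The gap is repairable, but with a different subsurface: take $N$ to be the complement of the (finitely many) complementary regions of $a \cup b$ that meet $K$, slightly shrunk --- equivalently, adjoin to your regular neighborhood all the $K$-free disk regions. This is still a compact planar surface with boundary containing $a \cup b$, every component of its complement now meets $K$ by construction, so each boundary circle of $N$ is essential in $\Sph^2 - (K \cup \{\infty\})$ and the incompressibility you wanted does hold; moreover $a$ and $b$ remain in minimal position in this $N$, since the adjoined regions have at least four corners and are therefore not bigons. With that modification, your identification of isotopy classes, disjointness, minimal position and unicorn paths in $N$ with their counterparts in $X_\infty$ is correct, and Lemma 3.5 of \cite{HPW} applied in $N$ yields the statement. (This is also what lies behind the paper's one-line citation; note the paper's footnote indicates that the arguments of \cite{HPW} were verified directly in this setting in the original French version, rather than deduced by a formal reduction.)
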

\begin{proof}
 This is Lemma $3.5$ of \cite{HPW}.
\end{proof}
\subsubsection*{Hyperbolicity of $X_\infty$}

We can now deduce from the previous lemmas the hyperbolicity of $X_\infty$.

\begin{prop}\label{prop4.2}
Let $\mathcal{G}$ be a geodesic path $X_\infty$ between two vertices $\alpha$ and $\beta$ of $X_\infty$. Then, for any choice of orientation on $\alpha$ and $\beta$, $P(\alpha^+,\beta^+)$ is included in the $6$-neighborhood of $\mathcal{G}$.
\end{prop}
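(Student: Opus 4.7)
The plan is to follow the proof of Proposition $4.2$ of \cite{HPW} applied in our setting. The ingredients now available are Lemma \ref{slim unicorn} ($1$-slim unicorn triangles) and Corollary \ref{lemme4.3} (for any path of length at most $2^k$, the unicorn path between the endpoints lies in the $k$-neighborhood of the path). The strategy is a standard ``halving argument'': iterated $1$-slimness combined with the logarithmic bound of Corollary \ref{lemme4.3}, balanced so as to produce the uniform constant $6$.

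I would argue by contradiction. Suppose there exists $\gamma \in P(\alpha^+,\beta^+)$ with $d(\gamma,\mathcal{G}) \geq 7$, and choose $\gamma$ so that $D := d(\gamma,\mathcal{G})$ is maximal. Let $\xi \in \mathcal{G}$ be a closest vertex to $\gamma$, so that $d(\gamma,\xi) = D$. By Lemma \ref{slim unicorn} applied to the unicorn triangle with vertices $\alpha, \xi, \beta$, there is a vertex $\gamma^{(1)}$ of $P(\alpha^+,\xi^+) \cup P(\xi^+,\beta^+)$ with $d(\gamma,\gamma^{(1)}) \leq 1$. Without loss of generality $\gamma^{(1)} \in P(\alpha^+,\xi^+)$. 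I would then iterate: bisecting the subgeodesic from $\alpha$ to $\xi$ at its midpoint $\xi^{(1)}$ and applying Lemma \ref{slim unicorn} to the unicorn triangle $\alpha\xi^{(1)}\xi$ yields $\gamma^{(2)}$ at distance $\leq 2$ from $\gamma$, lying on a unicorn path between two vertices of $\mathcal{G}$ at geodesic distance $\leq |\mathcal{G}|/2$. Continuing, after $k$ bisections one has $\gamma^{(k)}$ with $d(\gamma,\gamma^{(k)}) \leq k$, lying on a unicorn path whose endpoints are at geodesic distance $\leq |\mathcal{G}|/2^{k-1}$ along $\mathcal{G}$.

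I would then conclude by invoking Corollary \ref{lemme4.3}: once the current subgeodesic has length at most $2^{6-k}$, the unicorn path between its endpoints lies in the $(6-k)$-neighborhood of $\mathcal{G}$, whence $d(\gamma,\mathcal{G}) \leq k + (6-k) = 6$, contradicting $D \geq 7$. The main obstacle, and the technical heart of the argument, is the careful bookkeeping needed to ensure that the two competing estimates (the $+k$ cost coming from iterated $1$-slimness and the $6-k$ gain coming from Corollary \ref{lemme4.3}) really combine to the uniform constant $6$, independently of $|\mathcal{G}|$; the maximality of $D$ is used to make sure the iteration can be stopped after a bounded number of steps. This bookkeeping is exactly the one performed in \cite{HPW}, and since our Lemmas \ref{slim unicorn}, \ref{lemme3.5} and Corollary \ref{lemme4.3} are the direct analogs of the corresponding statements in that paper, the argument transfers verbatim to $X_\infty$.
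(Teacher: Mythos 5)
Your bisection scheme has a genuine gap: the two estimates you hope to balance cannot balance. After $k$ bisections the subgeodesic of $\mathcal{G}$ you are working over still has length about $d(\alpha,\beta)/2^{k}$, so your stopping condition (current subgeodesic of length at most $2^{6-k}$) amounts to requiring $d(\alpha,\beta)\leq 2^{5}$ or so, which simply never holds for long geodesics. There is also no mechanism by which the maximality of $D$ could stop the iteration: the points $\gamma^{(k)}$ live on unicorn paths between vertices of $\mathcal{G}$, not on $P(\alpha^+,\beta^+)$, so maximality of $D$ says nothing about them. In fact, iterated $1$-slimness plus dyadic subdivision of the base path \emph{is} the proof of Corollary \ref{lemme4.3}; running it with $\mathcal{G}$ as the base path can only reproduce that corollary, i.e.\ the bound $d(\gamma,\mathcal{G})\leq \lceil \log_2 d(\alpha,\beta)\rceil$, which is logarithmic in the length of $\mathcal{G}$ and not the uniform constant $6$.

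The idea you are missing (this is the actual content of Proposition $4.2$ of \cite{HPW}, to which the paper defers) is a bootstrap in which Corollary \ref{lemme4.3} is applied not to $\mathcal{G}$ but to a short \emph{detour path} whose length is comparable to $D$. Let $c^*$ be a vertex of $P(\alpha^+,\beta^+)$ at maximal distance $D$ from $\mathcal{G}$, and let $c'$, $c''$ be, on either side of $c^*$ along the unicorn path, the nearest vertices at distance $2D$ from $c^*$ (taking $\alpha$ or $\beta$ if none exist). By Lemma \ref{lemme3.5} --- which is indispensable here and never used in your argument --- the subpath of $P(\alpha^+,\beta^+)$ from $c'$ to $c''$ is itself the unicorn path $P(c'^+,c''^+)$ (the degenerate case of that lemma is immediately ruled out when $D\geq 7$). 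Now connect $c'$ to $c''$ by the path $\gamma'$ consisting of a geodesic from $c'$ to $\mathcal{G}$ (length $\leq D$, by maximality of $D$), a subsegment of $\mathcal{G}$ (length $\leq 6D$), and a geodesic from $\mathcal{G}$ back to $c''$ (length $\leq D$). Then $\gamma'$ has length at most $8D$ and, crucially, every vertex of $\gamma'$ is at distance at least $D$ from $c^*$: points of $\mathcal{G}$ by definition of $D$, and points of the two connecting geodesics because $d(c^*,c')=d(c^*,c'')=2D$. Applying Corollary \ref{lemme4.3} to $\gamma'$ gives $d(c^*,\gamma')\leq \lceil\log_2(8D)\rceil$, hence the self-referential inequality $D\leq 3+\lceil \log_2 D\rceil$, which fails for every $D\geq 7$; therefore $D\leq 6$. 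It is this trick --- feeding a path of length $O(D)$ that avoids the $D$-ball around $c^*$ into the logarithmic bound --- and not a balanced halving along $\mathcal{G}$, that produces a constant independent of $d(\alpha,\beta)$.
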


\begin{proof}
This is Proposition $4.2$ of \cite{HPW}.\end{proof}

\begin{corollary}\label{coro++}
Let $\mathcal G$ be a geodesic of $X_\infty$ between two vertices $\alpha$ ans $\beta$. Then for any choice of orientation on $\alpha$ and $\beta$, $\mathcal{G}$ is included in the \mbox{$13$-neighborhood} of~$P(\alpha^+,\beta^+)$.
\end{corollary}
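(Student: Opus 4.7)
The plan is to derive this reverse quasi-convexity from Proposition \ref{prop4.2} by a standard coarse projection / discrete intermediate-value argument. Write $\mathcal{G} = (\xi_0 = \alpha, \xi_1, \ldots, \xi_n = \beta)$ and $P(\alpha^+,\beta^+) = (\gamma_0 = \alpha, \gamma_1, \ldots, \gamma_m = \beta)$.

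First, using Proposition \ref{prop4.2} I would define a ``projection'' $\phi \colon \{0,\dots,m\} \to \{0,\dots,n\}$ by choosing, for each $k$, an index $\phi(k)$ with $d(\gamma_k, \xi_{\phi(k)}) \leq 6$, and pinning down $\phi(0) = 0$ and $\phi(m) = n$ using the shared endpoints. The key estimate is that $\phi$ can only make bounded jumps: since consecutive vertices $\gamma_k, \gamma_{k+1}$ of the unicorn path are adjacent in $X_\infty$, two applications of the triangle inequality give
$$d(\xi_{\phi(k)}, \xi_{\phi(k+1)}) \;\leq\; 6 + 1 + 6 \;=\; 13,$$
and, because these two points both lie on the geodesic $\mathcal{G}$, this quantity equals $|\phi(k+1) - \phi(k)|$.

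Second, I would apply the following discrete intermediate-value observation to the sequence $(\phi(k))_k$: a sequence of integers starting at $0$, ending at $n$, with consecutive increments of absolute value at most $13$, must pass within $\lfloor 13/2\rfloor = 6$ of every integer in $[0,n]$ (take $k^\ast := \max\{k : \phi(k) \leq i\}$ and note that either $\phi(k^\ast)$ or $\phi(k^\ast+1)$ is within $6$ of $i$). For a given vertex $\xi_i \in \mathcal{G}$, choosing such a $k^\ast$ yields
$$d(\xi_i, \gamma_{k^\ast}) \;\leq\; d(\xi_i, \xi_{\phi(k^\ast)}) + d(\xi_{\phi(k^\ast)}, \gamma_{k^\ast}) \;\leq\; 6 + 6 \;=\; 12 \;\leq\; 13,$$
so $\xi_i$ lies in the $13$-neighborhood of $P(\alpha^+,\beta^+)$.

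There is no real obstacle here: all the geometric work has been packaged into Proposition \ref{prop4.2}, and the constant $13$ arises simply because $6+1+6$ is the cost of ``one edge of the unicorn path'' measured from $\mathcal{G}$. The only small subtlety to handle is that $\phi$ need not be monotonic, but the intermediate-value argument above does not require monotonicity.
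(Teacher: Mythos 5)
Your proof is correct and follows essentially the same route as the paper: the paper also deduces the corollary from Proposition \ref{prop4.2} together with a standard lemma (a geodesic between two points lies in the $(2k+1)$-neighborhood of any path between those points that stays in the geodesic's $k$-neighborhood), and your coarse projection $\phi$ with bounded jumps plus the discrete intermediate-value argument is precisely a proof of that standard lemma in the graph setting, with $k=6$ giving $2\cdot 6+1=13$. The only differences are cosmetic: you prove the auxiliary fact rather than cite it, your final bound of $12$ is slightly sharper than the stated $13$, and in the last display you should use $\gamma_{k^\ast}$ or $\gamma_{k^\ast+1}$ according to which of $\phi(k^\ast)$, $\phi(k^\ast+1)$ is within $6$ of $i$.
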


This is a consequence of Proposition \ref{prop4.2} and of the standard following Lemma\footnote{Again, we didn't translate the proof of this Lemma.}:
\begin{lemma} Let $X$ be a geodesic metric space. Let $\mathcal G$ be a geodesic of $X$ between two points $\alpha$ and $\beta$. Let $k$ be a positive integer. If $\mathcal J$ is a path of $X$ between $\alpha$ and $\beta$ which stays in the $k$-neighborhood of $\mathcal G$, then $\mathcal G$ stays in the $(2k+1)$-neighborhood of $\mathcal J$.
\end{lemma}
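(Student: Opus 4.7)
The plan is to reduce the statement to a continuous intermediate value argument: for each point $p \in \mathcal G$, produce a point $q \in \mathcal J$ whose distances from the two endpoints of $\mathcal G$ approximately match the corresponding distances from $p$, and then conclude via the triangle inequality using the hypothesis that $\mathcal J$ stays within the $k$-neighborhood of $\mathcal G$.

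More precisely, I would fix $p \in \mathcal G$ and write $L = d(\alpha, \beta)$, $s = d(\alpha, p)$, so that $d(p, \beta) = L - s$ because $p$ lies on the geodesic $\mathcal G$. The goal is to produce a point of $\mathcal J$ within distance $2k+1$ of $p$. Define the function $f : \mathcal J \to \R$ by $f(q) = d(\alpha, q) - d(q, \beta)$. Since the distance functions $d(\alpha, \cdot)$ and $d(\cdot, \beta)$ are $1$-Lipschitz, $f$ is $2$-Lipschitz along an arclength parametrization of $\mathcal J$, so in particular continuous. It satisfies $f(\alpha) = -L$ and $f(\beta) = L$, and so by the intermediate value theorem attains the value $2s - L$ at some point $q \in \mathcal J$.

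By hypothesis there exists $q' \in \mathcal G$ with $d(q, q') \leq k$; writing $s' = d(\alpha, q')$, we have $d(q', \beta) = L - s'$ since $q'$ lies on the geodesic $\mathcal G$. The triangle inequality yields $|d(\alpha, q) - s'| \leq k$ and $|d(q, \beta) - (L - s')| \leq k$. Subtracting these and using $f(q) = 2s - L$ gives $|2s - 2s'| \leq 2k$, i.e.\ $|s - s'| \leq k$, whence $d(p, q') = |s - s'| \leq k$ (again using that $p$ and $q'$ both lie on $\mathcal G$). A final triangle inequality gives $d(p, q) \leq d(p, q') + d(q', q) \leq 2k$, safely inside the claimed $(2k+1)$-neighborhood.

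The proof is essentially routine; there is no real obstacle beyond bookkeeping of the inequalities. The only mild subtlety is the continuity of $f$ along $\mathcal J$, which is immediate from the Lipschitz property of the distance functions. The extra $+1$ in the stated bound $2k+1$ is harmless slack: it comfortably absorbs the case where one prefers to work vertex-by-vertex in the graph $X_\infty$, since $f$ is integer-valued on vertices and can jump over its target value $2s-L$ by at most $1$ when crossing an edge, in which case one picks a vertex $q$ minimizing $|f(q) - (2s-L)|$ and picks up an additional error of at most $1$ in the estimate $d(p,q') \leq k+1$, leading to $d(p,q) \leq 2k+1$.
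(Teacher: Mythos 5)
Your proof is correct: the intermediate value argument applied to $f(q)=d(\alpha,q)-d(q,\beta)$ along $\mathcal J$, combined with the triangle inequalities against a nearby point $q'\in\mathcal G$ and the fact that distances along a geodesic are additive, yields $d(p,q)\le 2k$ in the continuous setting and $\le 2k+1$ once the integer-valued, step-by-at-most-two behaviour of $f$ on a graph path is accounted for, which is exactly the claimed bound. Note that the paper itself does not supply a proof of this lemma (the footnote states the proof was not translated from the original French version), so there is no written argument to compare against; what you give is the standard connectedness/intermediate-value proof of this classical fact, with the discrete case handled correctly.
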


%\begin{proof} [Preuve du lemme.] Soit $\mathcal G'$ un sous-segment de $\mathcal G$ tel que pour tout $\gamma' \in \mathcal G'$, pour tout $\xi \in \mathcal J$, on a $d(\gamma',\xi)>k$. On montre que tous les points de $\mathcal G'$ sont à distance au plus $(2k+1)$ de $\mathcal J$. On oriente $\mathcal G$ et $\mathcal J$ de $\alpha$ vers $\beta$. L'ensemble $\mathcal G - \mathcal G'$ a deux composantes connexes~: on note $\mathcal G_1$ celle située avant $\mathcal G'$ (lorsque l'on parcourt $\mathcal G$ de $\alpha$ vers $\beta$), et $\mathcal G_2$ la deuxième. On a $d(\alpha,\G_2)>k$, sinon $\G'$ est dans le $k$-voisinage de $\beta \in \mathcal J$. Soit $\zeta$ le premier point de $\mathcal J$ (dans le sens de parcours de $\mathcal J$) tel que $d(\zeta,\mathcal G_2)\leq k$.
%Soit $\gamma_2 \in \mathcal G_2$ tel que $d(\zeta,\gamma_2)\leq k$. Soit $\zeta' \in \mathcal J$ à distance $1$ de $\zeta$ et situé avant $\zeta$ sur $\mathcal J$. Alors par définition de $\zeta$ et par hypothèse sur $\G'$, il existe $\gamma_1 \in \mathcal G_1$ tel que $d(\zeta',\gamma_1)\leq k$. Ainsi, comme $\mathcal G$ est une géodésique, le segment de $\mathcal G$ entre $\gamma_1$ et $\gamma_2$ est de longueur inférieure ou égal à $2k+1$ et contient $\mathcal G'$. On en déduit que tous les points de $\mathcal G'$ sont à distance au plus $2k+1$ de $\mathcal J$.
%\end{proof}
%
%
\begin{prop} \label{G_infini hyp}
The graph $X_\infty$ is $20$-hyperbolic.
\end{prop}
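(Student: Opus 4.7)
The plan is to combine the three main results already established -- the 1-slimness of unicorn triangles (Lemma \ref{slim unicorn}), the fact that unicorn paths lie in the $6$-neighborhood of geodesics (Proposition \ref{prop4.2}), and the reverse estimate that geodesics lie in the $13$-neighborhood of unicorn paths (Corollary \ref{coro++}) -- into the definition of Gromov-hyperbolicity. The constant $20 = 13 + 1 + 6$ arises transparently from chaining these three ingredients.

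More precisely, I would proceed as follows. Fix any geodesic triangle in $X_\infty$ with vertices $\alpha,\beta,\delta$ and sides $\mathcal{G}_{\alpha\beta}$, $\mathcal{G}_{\alpha\delta}$, $\mathcal{G}_{\delta\beta}$, and pick an arbitrary vertex $x \in \mathcal{G}_{\alpha\beta}$. Choose any orientations on $\alpha$, $\beta$, and $\delta$. By Corollary \ref{coro++} applied to the geodesic $\mathcal{G}_{\alpha\beta}$, there exists $y \in P(\alpha^+,\beta^+)$ with $d(x,y) \leq 13$. Next, apply Lemma \ref{slim unicorn} to the triple $(\alpha^+,\beta^+,\delta^+)$: there is an element $y^* \in P(\alpha^+,\delta^+) \cup P(\delta^+,\beta^+)$ with $d(y,y^*) \leq 1$. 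Finally, by Proposition \ref{prop4.2}, whichever of the two unicorn paths contains $y^*$ lies in the $6$-neighborhood of the corresponding geodesic side ($\mathcal{G}_{\alpha\delta}$ or $\mathcal{G}_{\delta\beta}$), so $y^*$ is within $6$ of a point $z$ on one of the two other sides of the triangle.

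Adding the three bounds yields $d(x,z) \leq 13 + 1 + 6 = 20$, so $x$ lies in the $20$-neighborhood of $\mathcal{G}_{\alpha\delta} \cup \mathcal{G}_{\delta\beta}$. By symmetry (the argument is insensitive to which side of the triangle contains $x$), every side of the triangle is contained in the $20$-neighborhood of the union of the other two, which is precisely the definition of $20$-hyperbolicity.

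There is no real obstacle here since all the work has been absorbed into the earlier lemmas imported from \cite{HPW}; the only thing to verify is that orientations can be consistently chosen so that Lemma \ref{slim unicorn} applies (which is immediate, as the slim triangle statement holds for arbitrary orientations). The argument is a direct transcription of the standard HPW deduction of hyperbolicity from $1$-slim unicorn triangles plus the two-sided Hausdorff estimate between geodesics and unicorn paths.
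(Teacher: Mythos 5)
Your proof is correct and is essentially identical to the paper's: both chain Corollary \ref{coro++} (geodesic in the $13$-neighborhood of a unicorn path), Lemma \ref{slim unicorn} ($1$-slimness of unicorn triangles), and Proposition \ref{prop4.2} (unicorn path in the $6$-neighborhood of a geodesic) to obtain the bound $13+1+6=20$. Only the notation differs.
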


\begin{proof}
Let $\alpha \beta \gamma$ be a geodesic triangle in $X_\infty$. Let $\zeta$ on the geodesic between $\alpha$ and $\beta$. We choose an orientation on $\alpha$, $\beta$ and $\gamma$. According to Corollary \ref{coro++}, there exists $\xi$ on $P(\alpha^+,\beta^+)$ such that $d(\zeta,\xi) \leq 13$. According to Lemma \ref{slim unicorn}, there exists $\xi^* \in P(\alpha^+,\gamma^+) \cup P(\gamma^+,\beta^+)$ such that $d(\xi,\xi^*)\leq 1$. According to Proposition \ref{prop4.2}, there exists $\zeta^*$ on one of the two geodesic sides of the triangle, either between $\alpha$ and $\gamma$ or between $\gamma$ and $\beta$, such that $d(\xi^*,\zeta^*)\leq 6$. Thus we have $d(\zeta,\zeta^*)\leq 20$. \end{proof}

\subsection{Quasi-isometry between $X_r$ and $X_\infty$} \label{def-q.i.}

We want to deduce the hyperbolicity of the ray graph $X_r$ from the hyperbolicity of $X_\infty$. To this end, we show that the two graphs are quasi-isometric.

\subsubsection*{Reminder on large scale geometry}
We use the following standard definitions and results (see for example Bridson \& Haefliger \cite{Bridson-Haefliger}).

\begin{definition}[Quasi-isometry] Let $X$ and $X'$ be two metric spaces. A map $f : X' \rightarrow X$ is a $(\kappa,\varepsilon)$-quasi-isometric embedding if there exists $\kappa \geq 1$ and $\varepsilon \geq 0$ such that for all $x,y \in X'$:
$${1\over \kappa}d(x,y)-\varepsilon \leq d(f(x),f(y))\leq \kappa d(x,y) +\varepsilon.$$

If moreover there exists $C \geq 0$ such that every element of $X$ is in the $C$-neighborhood of $f(X')$, we say that $f$ is a \emph{$(\kappa,\varepsilon)$-quasi-isometry}. When such a map exists, we say that $X$ and $X'$ are quasi-isometric.
\end{definition}

\begin{definition}[Quasi-geodesic]
A \emph{$(\kappa,\varepsilon)$-quasi-geodesic} of a metric space $X$ is a $(\kappa,\varepsilon)$-quasi-isometric embedding of an interval of $\R$ to $X$. For abbreviation, we call quasi-geodesic any image in $X$ of such an embedding.
\end{definition}

\begin{theo*}[Morse Lemma, see for example Bridson \& Haefliger \cite{Bridson-Haefliger}, Theorem~$1.7$ page $401$] \label{Morse} Let $X$ be a $\delta$-hyperbolic metric space. For all $\kappa, \varepsilon$ positive real numbers, there exists a universal constant $B$ which depends only on $\delta$, $\kappa$ and $\varepsilon$, such that every segment which is $(\kappa,\varepsilon)$-quasi-geodesic is in the $B$-neighborhood of any geodesic between its endpoints.
\end{theo*}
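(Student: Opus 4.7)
The Morse Lemma is a classical result in the geometry of hyperbolic spaces; my plan is to reconstruct the standard ``farthest-excursion with exponential divergence'' proof.

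The first step is a \emph{taming} of the quasi-geodesic. Given a $(\kappa,\varepsilon)$-quasi-isometric embedding $c:[a,b] \to X$, I would replace it by a continuous path $c':[a,b]\to X$ obtained by geodesically interpolating $c$ between consecutive integer times (and between the last integer time and $b$); since $X$ is geodesic this is possible, and the resulting $c'$ is a $(\kappa',\varepsilon')$-quasi-geodesic with constants depending only on $(\kappa,\varepsilon)$, whose length on any subinterval satisfies $\mathrm{length}(c'|_{[s,t]}) \leq (\kappa+\varepsilon)(|t-s|+1)$. The Hausdorff distance between $c$ and $c'$ is bounded by a constant depending only on $(\kappa,\varepsilon)$, so it suffices to bound $\sup_{t\in[a,b]} d(c'(t),\gamma)$, where $\gamma$ is any geodesic between the common endpoints.

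The technical heart of the argument is an \emph{exponential divergence} estimate: in a $\delta$-hyperbolic geodesic space, if a continuous path $\sigma:[0,L]\to X$ has both endpoints on a geodesic $\gamma$ and remains at distance at least $R$ from $\gamma$ throughout, then $L \geq 2^{R/\delta - C_0}$ for some universal $C_0$. I would prove this by bisection: project the midpoint $\sigma(L/2)$ to a point $m\in\gamma$, and observe that by the $\delta$-thin triangle property applied to the triangles formed by $\sigma(0),\sigma(L/2),\sigma(L)$ and the projections, each half of $\sigma$ still avoids the open ball of radius $R-O(\delta)$ around an appropriately chosen geodesic. Iterating this halving gives the stated exponential lower bound.

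The final step is the farthest-point comparison. Set $D:=\sup_t d(c'(t),\gamma)$, pick $t_0$ nearly attaining $D$, and set $x_0 := c'(t_0)$. Choose $t_- \leq t_0 \leq t_+$ as the outermost times at which $c'$ comes within distance $D/2$ of $\gamma$ (or equals $a$ resp.\ $b$), and let $y_\pm$ be the corresponding nearby points of $\gamma$. On the one hand, the subpath $c'|_{[t_-,t_+]}$ stays outside the open $(D/2)$-neighborhood of $\gamma|_{[y_-,y_+]}$, so by the exponential divergence estimate its length is at least $2^{D/(2\delta) - C_0}$. On the other hand, by the quasi-isometric inequality together with the taming, this same length is at most $(\kappa+\varepsilon)(\kappa'\,d(c'(t_-),c'(t_+)) + \varepsilon' + 1) = O(D)$, since $d(c'(t_-),c'(t_+)) \leq d(y_-,y_+) + D \leq O(D)$ (the subgeodesic piece is controlled by the quasi-geodesic inequality on $c'|_{[t_-,t_+]}$). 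Comparing an exponential lower bound with a linear upper bound in $D$ forces $D \leq B$ for some $B=B(\delta,\kappa,\varepsilon)$, as required. The main obstacle is the careful propagation of constants through the bisection in the divergence lemma so that the exponent is genuinely independent of $R$, together with handling the boundary cases where $t_-=a$ or $t_+=b$ rather than occurring as a return near $\gamma$; both are well-known technical points but require some bookkeeping.
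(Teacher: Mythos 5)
The paper itself gives no proof of this statement: it is quoted as a classical theorem with a pointer to Bridson \& Haefliger \cite{Bridson-Haefliger}, so your proposal has to be measured against the standard textbook argument. Your outline assembles the right ingredients (taming, the bisection/exponential-divergence lemma, a farthest-point comparison), but the final comparison contains a genuine gap. You set $D=\sup_t d(c'(t),\gamma)$ and take $t_-\le t_0\le t_+$ to be the outermost times at which $c'$ meets the $(D/2)$-neighborhood of $\gamma$; your conclusion needs $d(y_-,y_+)\le O(D)$, but nothing in the argument gives this, and your parenthetical justification is circular: the quasi-geodesic inequality relates $d(c'(t_-),c'(t_+))$ to $|t_+-t_-|$, and \emph{neither} quantity is controlled by $D$ at this stage --- controlling them is essentially the content of the theorem. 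Concretely, your argument does not exclude the scenario in which $c'$ rises to height about $D/2$ above $\gamma$ and runs roughly parallel to $\gamma$ at that height for a long stretch of length $L$ before descending: the correct length upper bound for the excursion is then $O(L+D)$, not $O(D)$, and comparing it with the exponential lower bound only yields $L\geq c\, 2^{D/(2\delta)}$, i.e.\ the excursion must be exponentially long. That is not a contradiction, hence no bound on $D$ follows.

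The standard proof (the one in the cited reference) closes exactly this hole by a two-stage argument. Stage 1 runs the farthest-point/exponential-versus-linear comparison \emph{from the geodesic side}: set $D_0=\sup_{x\in\gamma}d(x,\mathrm{im}\,c')$, pick $x_0\in\gamma$ attaining it, and choose the anchor points $y,z$ on $\gamma$ itself at distance $2D_0$ from $x_0$ (or at the endpoints); then $d(y,z)\le 4D_0$ holds \emph{by construction}, the concatenated path avoids the open ball $B(x_0,D_0)$ because $D_0$ is the distance from $x_0$ to the entire image of $c'$, and the comparison legitimately bounds $D_0$ by a constant $D_1(\delta,\kappa,\varepsilon)$. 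Stage 2 then deduces the direction actually stated here (the quasi-geodesic lies in a bounded neighborhood of $\gamma$) by a connectedness argument: on a maximal parameter interval $[a',b']$ where $c'$ stays outside the closed $D_1$-neighborhood of $\gamma$, the stage-1 bound shows every point of $\gamma$ is within $D_1$ of $c'([a,a'])\cup c'([b',b])$, and connectedness of $\gamma$ produces a point of $\gamma$ within $D_1$ of both pieces, hence times $s\le a'$, $t\ge b'$ with $d(c'(s),c'(t))\le 2D_1$; the quasi-geodesic inequality then bounds $|s-t|$, hence the length of $c'|_{[s,t]}$, hence the height of the excursion, by $O(D_1)$. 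Your minor inconsistency in stating the divergence lemma (a path cannot both have its endpoints on $\gamma$ and stay at distance at least $R$ from $\gamma$ throughout) is cosmetic and easily repaired; the two-stage structure, by contrast, is essential and cannot be compressed into the single quasi-geodesic-side comparison you propose.
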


We say that $B$ is the \emph{$(\kappa,\varepsilon)$-Morse constant} of the hyperbolic space $X$.

\begin{theo*}[see for example Bridson \& Haefliger \cite{Bridson-Haefliger},  Theorem $1.9$ page $402$]
\label{bridson}
Let $X$ and $X'$ be two geodesic metric spaces and let $f:X'\rightarrow X$ be a quasi-isometric embedding. If $X$ is Gromov-hyperbolic, then $X'$ is also Gromov-hyperbolic.
\end{theo*}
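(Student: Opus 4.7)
The plan is to show that geodesic triangles in $X'$ are uniformly thin by shuttling them through $f$ to the hyperbolic target $X$ and back, using the Morse lemma as the main bridge. Let $X$ be $\delta$-hyperbolic and let $B$ denote the $(\kappa,\varepsilon)$-Morse constant of $X$ supplied by the Morse lemma of the preceding theorem.

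Starting from a geodesic triangle with vertices $x', y', z' \in X'$ and a point $p$ on the side $[x',y']$, I would first push each side forward by $f$. Because $f$ is a $(\kappa,\varepsilon)$-quasi-isometric embedding and each side of the triangle is an arc-length parametrized geodesic of $X'$, the three image paths are $(\kappa,\varepsilon)$-quasi-geodesics in $X$ joining $f(x'), f(y'), f(z')$. I would then choose honest geodesics $g_1, g_2, g_3$ of $X$ between these three vertices, forming a comparison geodesic triangle in $X$.

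The core step is a three-term chain of inclusions. By the Morse lemma, each $f$-image of a side of the triangle in $X'$ lies within $B$ of the corresponding geodesic $g_i$. Conversely, by the auxiliary lemma stated just after Proposition~\ref{prop4.2}, each $g_i$ lies within $2B+1$ of the corresponding $f$-image path. Combined with the $\delta$-thinness of the geodesic triangle $g_1 \cup g_2 \cup g_3$ in $X$, these three inclusions give that $f(p)$ lies within $B + \delta + (2B+1) = 3B + \delta + 1$ of some $f(q)$ with $q$ on one of the two remaining sides of the triangle in $X'$.

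To conclude, I would pull this estimate back to $X'$ via the lower bound of the quasi-isometric embedding inequality, obtaining $d_{X'}(p,q) \leq \kappa(3B + \delta + 1 + \varepsilon)$. Hence every geodesic triangle in $X'$ is $\delta'$-thin with $\delta' := \kappa(3B+\delta+1+\varepsilon)$, proving that $X'$ is Gromov-hyperbolic. The main obstacle is really just bookkeeping: one must verify that the $f$-image of a parametrized geodesic is genuinely a $(\kappa,\varepsilon)$-quasi-geodesic in the sense required by the Morse lemma (which holds because arc-length parametrized geodesics satisfy $d(\gamma(s),\gamma(t))=|s-t|$), and one must invoke both the forward and the reverse Morse-style tracking to control the constant explicitly. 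Beyond that, the argument is a standard three-step triangle chase with no serious geometric content.
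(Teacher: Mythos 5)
The paper gives no proof of this statement: it is imported as a black box from Bridson--Haefliger \cite{Bridson-Haefliger}, so the only comparison available is with the standard proof of that theorem, which is exactly the argument you outline. Your overall structure is sound. The $f$-images of the sides of a geodesic triangle of $X'$ are $(\kappa,\varepsilon)$-quasi-geodesics of $X$ (here it matters, as you note, that geodesics are parametrized by arc length; note also that the paper's definition of quasi-geodesic does not require continuity, so the paper's Morse lemma does apply to $f\circ\gamma$). Comparing with a geodesic triangle on the vertices $f(x'), f(y'), f(z')$, using $\delta$-thinness of that triangle in $X$, and pulling the estimate back through the lower quasi-isometry inequality gives $d_{X'}(p,q)\le\kappa(3B+\delta+1+\varepsilon)$, which is correct bookkeeping.

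There is, however, one step that fails as literally written: the reverse inclusion. You invoke the auxiliary lemma stated after Proposition \ref{prop4.2} with $\mathcal{J}=f([y',z'])$, but that lemma concerns a \emph{path} between the endpoints, i.e.\ a connected object, and its conclusion is false for arbitrary subsets of the $k$-neighborhood of the geodesic $\mathcal{G}$ (take $\mathcal{J}$ to consist of the two endpoints alone). Since a quasi-isometric embedding need not be continuous, $f([y',z'])$ is in general not a path but only a coarsely connected set: points at parameter distance at most $1$ have images at distance at most $\kappa+\varepsilon$. So the lemma cannot be quoted verbatim. The repair is routine and does not change the structure of your argument: either (i) interpolate, joining $f(\gamma(i))$ to $f(\gamma(i+1))$ by geodesic segments of length at most $\kappa+\varepsilon$, which produces a genuine continuous path that is still a quasi-geodesic (with slightly larger constants) and lies at Hausdorff distance at most $(\kappa+\varepsilon)/2$ from the image, and then apply both the Morse lemma and the path lemma to this interpolated path; or (ii) invoke the stability theorem in its full form (Theorem $1.7$ of \cite{Bridson-Haefliger}, page $401$), which bounds the \emph{Hausdorff} distance between a quasi-geodesic and a geodesic with the same endpoints, so that no separate reverse inclusion is needed. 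Either repair changes your constant only by a bounded amount, and the rest of the argument goes through.
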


\subsubsection*{Quasi-isometry between $X_r$ and $X_\infty$}

According to Proposition \ref{G_infini hyp}, we know that $X_\infty$ is a hyperbolic space. To prove that $X_r$ is also Gromov-hyperbolic, we are now looking for a quasi-isometric embedding from $X_r$ to $X_\infty$, which would allow us to conclude using the previous theorem. We will actually prove that the chosen embedding is a quasi-isometry.

We define a map $f:X_r \rightarrow X_\infty$ which sends ${x} \in X_r$ to any $\hat {x} \in X_\infty$ such that ${x}$ and $\hat {x}$ are homotopically disjoint in $\Sph^2-{(K\cup \{\infty\}})$.

\begin{prop}\label{prop quasi-isom}
The map $f$ previously defined is a quasi-isometry.
\end{prop}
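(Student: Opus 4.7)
The plan is to show that $f$ is a quasi-isometric embedding which is also coarsely surjective. Since $f$ involves a choice for each $x$, the quasi-isometry constants must be insensitive to that choice, which will follow from a key lemma. The main technical tool is a canonical auxiliary loop $T_x \in X_\infty$ attached to each ray $x$: choose a representative of $x$ and a thin tubular neighborhood $N$ of it in $\Sph^2$ satisfying $N \cap K = \{p_x\}$, where $p_x$ is the Cantor endpoint of $x$; let $T_x$ be the isotopy class of the simple loop based at $\infty$ that follows $\partial N$ and skirts around $p_x$ without touching it. In particular $T_x$ separates $K$ into $\{p_x\}$ and $K \setminus \{p_x\}$, and $T_x$ is homotopically disjoint from $x$.

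The \textbf{key lemma} asserts: if $\ell \in X_\infty$ is homotopically disjoint from $x$, then $\ell$ is also homotopically disjoint from $T_x$. To prove it, realize $\ell$ so that $\ell \cup \{\infty\}$ bounds two open disks $D_1, D_2 \subset \Sph^2$, each meeting $K$; since $\ell$ and $x$ are disjoint, the ray $x$ and hence $p_x$ lie in $\bar D_1$. A sufficiently thin tube $T_x$ lies entirely in $\bar D_1$, and by arranging the two tangent directions of $T_x$ at $\infty$ to lie strictly inside the angular sector of $D_1$ at $\infty$ (which is possible because $x$ itself enters $D_1$ from $\infty$), one obtains $T_x \cap \ell = \{\infty\}$. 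A direct consequence is that any two loops $\hat x, \hat x' \in X_\infty$ disjoint from a common ray $x$ are at $X_\infty$-distance at most $2$, both being within distance $1$ of $T_x$.

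Using the key lemma, the three parts of the quasi-isometry claim go as follows. For the upper Lipschitz bound: if $x, y$ are homotopically disjoint rays then $T_x, T_y$ can be drawn disjoint from each other, so
\[ d_{X_\infty}(f(x), f(y)) \leq d_{X_\infty}(f(x), T_x) + d_{X_\infty}(T_x, T_y) + d_{X_\infty}(T_y, f(y)) \leq 3, \]
and iterating along a geodesic yields $d_{X_\infty}(f(x), f(y)) \leq 3\, d_{X_r}(x, y)$. For coarse surjectivity: given $\ell \in X_\infty$, pick a Cantor point $q$ on one side of $\ell$ and a ray $x_\ell$ from $\infty$ to $q$ drawn on that side; both $\ell$ and $f(x_\ell)$ are then disjoint from $x_\ell$, so the key lemma gives $d_{X_\infty}(\ell, f(x_\ell)) \leq 2$. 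For the lower bound, I would prove the symmetric statement that any two rays disjoint from a common loop lie at $X_r$-distance $\leq 2$ (immediate if their Cantor endpoints lie on opposite sides of the loop; otherwise bridge via a ray to a Cantor point on the opposite side). Given a geodesic $f(x) = \ell_0, \ldots, \ell_k = f(y)$ in $X_\infty$, choose rays $x_i$ disjoint from $\ell_i$, with $x_0 = x$ and $x_k = y$. Each pair of consecutive disjoint loops $\ell_i, \ell_{i+1}$ has union $\ell_i \cup \ell_{i+1} \cup \{\infty\}$ a wedge of two circles whose complement in $\Sph^2$ has three disk components, at least two of which contain Cantor points by essentialness; one such component supports a ray disjoint from both $\ell_i$ and $\ell_{i+1}$, giving $d_{X_r}(x_i, x_{i+1}) \leq 4$ and finally $d_{X_r}(x, y) \leq 4k$.

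The \textbf{main obstacle} is the key lemma, due to the shared basepoint $\infty$: a minimal-position / bigon-removal argument is delicate because a would-be bigon between $\ell$ and $T_x$ that includes $\infty$ on its boundary is not a disk in the punctured surface $\Sph^2 \setminus (K \cup \{\infty\})$, hence is not ruled out by the usual notion of minimal position. My plan is therefore to sidestep bigon removal entirely and construct the disjoint representatives directly from the nested disk structure of $\ell$ and $T_x$, with a careful choice of germs at the basepoint.
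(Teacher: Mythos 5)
Your overall architecture (upper bound via tube-boundary loops, coarse surjectivity, lower bound by bridging through rays in complementary disks) is sound and parallels the paper's three lemmas, but the central object of your proof does not exist, and this is a genuine gap rather than a technicality. You define $T_x$ as a loop separating $K$ into $\{p_x\}$ and $K\setminus\{p_x\}$. No such loop exists: the complement of a simple loop in $\Sph^2$ avoiding $K$ consists of two open disks, so if one of them met $K$ exactly in $\{p_x\}$, the point $p_x$ would be isolated in $K$; but a Cantor set is perfect. Equivalently, any tubular neighborhood $N$ of a representative of $x$ contains a whole clopen neighborhood $A = K\cap N$ of $p_x$ in $K$, which is infinite, so $N\cap K=\{p_x\}$ is impossible. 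Worse, if you instead fix $T_x$ to be the boundary loop of some particular tube $N$, your key lemma becomes false: pick a nonempty clopen set $A_q\subset A\setminus\{p_x\}$ lying in a small ball inside $N$ off of $x$, and a nonempty proper clopen set $C_1\subset K\setminus A$; a simple loop $\ell$ based at $\infty$ that encircles exactly $A_q\cup C_1$ (a thin connected-sum of a loop around $A_q$ and a loop around $C_1$, with connecting corridors avoiding $x$ and $K$) can be drawn disjoint from $x$, yet the partition $\{A_q\cup C_1,\ K\setminus(A_q\cup C_1)\}$ it induces on $K$ is \emph{linked} with the partition $\{A,\ K\setminus A\}$ induced by $T_x$ (none of the four inclusions between sides holds), and two loops through $\infty$ with linked partitions cannot have disjoint representatives. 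So $\ell$ is disjoint from $x$ but $d_{X_\infty}(\ell, T_x)\geq 2$, contradicting the key lemma for any fixed choice of $T_x$. The difficulty you anticipated (bigons at the basepoint) is not the real obstacle; the obstacle is that there is no canonical smallest loop around a Cantor endpoint.

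The repair is to abandon canonicity and let the auxiliary loop depend on the finitely many curves in play: given any finite collection of loops each homotopically disjoint from $x$, realize everything in minimal position and choose the tube around $x$ thin enough (with its two strands entering $\infty$ inside the relevant angular sectors, as in your argument) to be disjoint from all of them. This weaker statement is true, it yields exactly your "direct consequence" (any two loops disjoint from a common ray are at $X_\infty$-distance at most $2$), and it is precisely what the paper proves in Lemma \ref{lemme X_r X_infty}, where the intermediate loop $\hat y$ is constructed inside $\mathcal U\cap\mathcal V$ \emph{after} both loops are fixed. With this modification the rest of your proposal goes through: your upper bound becomes $d(f(x),f(y))\leq 3\,d(x,y)$ (the paper's Lemma \ref{quasi-iso} gets the sharper additive bound $d(x,y)+4$ by the same tube idea), your coarse surjectivity argument matches the paper's, and your lower bound $d(x,y)\leq 4\,d(f(x),f(y))$ (bridging each edge of a path in $X_\infty$ through a ray in a complementary component, which correctly needs no key lemma) is a valid, slightly coarser variant of the paper's Lemma \ref{retour}, which exploits geodesicity to get $d(x,y)\leq d(f(x),f(y))+2$. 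Multiplicative constants still give a quasi-isometry, so the statement would follow; but as written, the proof is not correct.
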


\begin{lemma} \label{retour}
Let $\hat {x}, \hat {y} \in X_\infty$ and ${x},{y} \in X_r$ such that ${x}$ (respectively ${y}$) is homotopically disjoint from $\hat {x}$ (respectively from $\hat {y}$).
Then: $$d({x},{y})\leq d(\hat {x},\hat {y})+2.$$
\end{lemma}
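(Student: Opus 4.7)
The plan is to convert a geodesic path from $\hat x$ to $\hat y$ in $X_\infty$ into a path of comparable length from $x$ to $y$ in $X_r$. Two preliminary observations carry the argument.

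\emph{Observation 1.} If two rays $v, v' \in X_r$ are both disjoint from a common loop $\hat z \in X_\infty$, then $d(v, v') \leq 2$ in $X_r$. Indeed, $\hat z$ separates $\Sph^2$ into two open disks $D_1, D_2$, each meeting $K$ by essentiality, and hence containing infinitely many Cantor points (since $K$ has no isolated points). Choosing representatives of $v$ and $v'$ disjoint from $\hat z$, each has its interior in one of $D_1, D_2$; if they lie on opposite sides they are already disjoint, and otherwise any ray with interior on the opposite side is a common neighbor.

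\emph{Observation 2.} If $\hat z, \hat w \in X_\infty$ are disjoint and $v \in X_r$ is disjoint from $\hat z$, then there exists $v' \in X_r$ disjoint from $\hat w$ with $d(v, v') \leq 1$ in $X_r$. Fix representatives of $\hat z$ and $\hat w$ with disjoint interiors meeting only at $\infty$, and let $D$ be the component of $\Sph^2 \setminus \hat z$ containing the interior of $v$. A case analysis on the cyclic order of the four arc-ends of $\hat z \cup \hat w$ at $\infty$ (either $zzww$, in which case $\hat w$ lies in the closure of one component of $\Sph^2 \setminus \hat z$, or $zwzw$, in which case $\Sph^2 \setminus (\hat z \cup \hat w)$ has four components arranged cyclically around $\infty$) produces a component $R$ of $\Sph^2 \setminus (\hat z \cup \hat w)$ which is disjoint from $v$ and contains a point of $K$, using essentiality of $\hat z$ together with the absence of isolated points in $K$. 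Take $v'$ to be a ray from $\infty$ into $R$ ending at a Cantor point in $R$.

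With both observations in hand, fix a geodesic $\hat x = \hat z_0, \hat z_1, \ldots, \hat z_n = \hat y$ of length $n = d(\hat x, \hat y)$ in $X_\infty$ and set $w_0 := x$. Iterated application of Observation 2 produces rays $w_1, \ldots, w_n$ with $w_i$ disjoint from $\hat z_i$ and $d(w_{i-1}, w_i) \leq 1$ in $X_r$, whence $d(x, w_n) \leq n$. Since $w_n$ and $y$ are both disjoint from $\hat z_n = \hat y$, Observation 1 yields $d(w_n, y) \leq 2$, and the triangle inequality gives $d(x, y) \leq n + 2 = d(\hat x, \hat y) + 2$. The main obstacle is the topological case analysis in Observation 2; the argument is elementary but requires identifying the correct region $R$ in each local configuration of $\hat z \cup \hat w$ at $\infty$, and in the $zwzw$ configuration one must also verify that at least one of the two components of $\Sph^2 \setminus (\hat z \cup \hat w)$ lying on the side of $\hat z$ opposite to $v$ carries a Cantor point (which again follows from essentiality of $\hat z$, since $\hat w$ itself carries none).
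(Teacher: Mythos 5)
Your proof is correct, but it follows a genuinely different route from the paper's. The paper fixes a geodesic $(\hat\mu_j)_{0\le j\le n}$ in $X_\infty$ and uses geodesicity in an essential way: since $(\hat\mu_{i-1})_\#$ and $(\hat\mu_{i+1})_\#$ must intersect (otherwise the path could be shortened), each intermediate loop $(\hat\mu_i)_\#$ has a well-defined \emph{far} complementary component $A_i$ containing neither neighbor; consecutive $A_i$'s are disjoint, so rays $\mu_i\subset A_i$ (which exist by essentiality) already form a path in $X_r$, which is then attached to $x$ and to $y$ by two distance-$\leq 2$ estimates at the ends. You instead transport a ray across the path one edge at a time (your Observation 2), never invoking geodesicity, and close with your Observation 1. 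What your version buys is the stronger statement $d(x,y)\le \mathrm{length}(\gamma)+2$ for \emph{every} path $\gamma$ in $X_\infty$ from $\hat x$ to $\hat y$, of which the lemma is the special case of a geodesic; what the paper's version buys is that all the rays $\mu_i$ are defined at once from the geodesic, with no induction and no case analysis. Two remarks. First, in both observations the disjointness hypotheses must be realized simultaneously (Observation 2 needs representatives with $v\cap\hat z=\emptyset$ and $\hat z\cap\hat w=\emptyset$ at the same time); this is standard and is exactly what the geodesic representatives of Proposition \ref{prop 3.5} provide, as in the paper's use of $(\cdot)_\#$. Second, your alternating configuration $zwzw$ cannot actually occur: the interior of $\hat w$ is connected and disjoint from $\hat z$, hence lies in a single complementary disk of $\hat z$, so the four ends at $\infty$ are always nested ($zwwz$), and $\Sph^2\setminus(\hat z\cup\hat w)$ has three components, not four. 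The verification you defer to that (vacuous) case --- that some component on the side of $\hat z$ away from $v$ contains a point of $K$, because $\hat w$ itself carries none --- is precisely what is needed in the genuine sub-case of $zzww$ where $\hat w$ lies on the opposite side of $\hat z$ from $v$ (in the other sub-case $v'=v$ already works). Since you do supply that verification, the argument is complete; only the labelling of the cases is off.
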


\begin{remark} In particular, this lemma implies that for all $x,y \in X_r$, $d(x,y)-2\leq d(f(x),f(y))$.
\end{remark}

\begin{proof}
We denote by $n$ the distance between $\hat {x}$ and $\hat {y}$ in $X_\infty$. Let $(\hat \mu_j)_{0\leq j\leq n}$ be a geodesic in $X_\infty$ between $\hat {x}$ and $\hat {y}$ (in particular, $\hat \mu_0 = \hat {x}$ and $\hat \mu_n=\hat {y}$). We want to construct a path $(\mu_1,...,\mu_{n-1})$ of length $(n-1)$ in $X_r$, and show that $d(x,\mu_1)\leq 2$ and $d(\mu_{n-1},y)\leq 2$. For every $\alpha$ in $X_r$ or $X_\infty$, we denote by $\alpha_\#$ the geodesic representative of $\alpha$.

Because $(\hat \mu_i)_i$ is a geodesic of $X_\infty$, for all $1\leq i \leq n-1$, $(\hat \mu_i)_\#$ is disjoint from $(\hat \mu_{i-1})_\#$ and $(\hat \mu_{i+1})_\#$ (except on $\{\infty\}$). Moreover, $(\hat \mu_{i-1})_\#$ and $(\hat \mu_{i+1})_\#$ intersect outside $\{\infty\}$. Thus $(\hat \mu_i)_\#$ splits the sphere $\Sph^2$ into two connected components, and one of them contains $(\hat \mu_{i-1})_\#$ and $(\hat \mu_{i+1})_\#$. We denote by $A_i$ the other connected component. Observe that for every $1\leq i \leq n-2$, $A_i$ is disjoint from $A_{i+1}$. For every $1\leq i \leq n-1$, we choose a ray $\mu_i$ such that $(\mu_i)_\#$ is included in $A_i$ (such a $\mu_i$ exists because the $\hat \mu_i$'s are essential curves). Hence we have a path $(\mu_i)_{1\leq i\leq n-1}$ of length $(n-1)$ in $X_r$.

Let us now prove that $d(x,\mu_1)\leq 2$: if $(\mu_1)_\#$ intersects $x_\#$, then $x_\#$ is in the connected component of $\Sph^2 - \hat x_\#$ which contains $(\hat \mu_1)_\#$ and $(\mu_1)_\#$. Every representative of a ray which is included in the other connected component of $\Sph^2 - \hat x_\#$ does not intersect neither $(\mu_1)_\#$, nor $x_\#$: thus $d(x,\mu_1)\leq 2$.
We prove in the same way that $d(\mu_{n-1},y)\leq 2$.\end{proof}

\begin{lemma}\label{lemme X_r X_infty}
Let $\hat x \in X_\infty$. Let $x \in X_r$ homotopically disjoint from $\hat x$. Then: $$d(f(x),\hat x) \leq 2.$$
\end{lemma}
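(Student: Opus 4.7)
The plan is to construct a loop $\ell\in X_\infty$ that is homotopically disjoint from both $\hat x$ and $f(x)$. Given such an $\ell$, the triangle inequality in $X_\infty$ yields $d(f(x),\hat x)\leq d(f(x),\ell)+d(\ell,\hat x)\leq 2$, which is the desired conclusion.

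Write $\hat x':=f(x)$; by definition of $f$, $\hat x'$ is also homotopically disjoint from $x$. Using Proposition~\ref{prop 3.5}, I would realize $x,\hat x,\hat x'$ by representatives so that $\hat x$ and $\hat x'$ each meet $x$ at most at $\infty$. Let $p$ denote the Cantor-endpoint of $x$. Because $p\notin\hat x\cup\hat x'$ and the latter are closed in $\Sph^2\setminus\{\infty\}$, I can choose a small open disk $B$ around $p$ with $\overline B\cap(\hat x\cup\hat x')=\emptyset$ and $\partial B\cap K=\emptyset$. Similarly, pick a small open disk $D$ around $\infty$ with $D\cap K=\emptyset$, and isotope the three arcs so that their intersections with $D$ are radial arcs to $\infty$ (one for $x$, two for each loop). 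Letting $x_0\in\partial D$ be the entry point of $x$, I would then define $\ell$ using two radial arcs in $D$ starting at points $\ell_0,\ell_1\in\partial D$ immediately on either side of $x_0$, joined outside $D$ by a thin tubular neighborhood of $x$ running from $\ell_0$ to $\partial B$, a detour along $\partial B$ around $p$, and a symmetric return from $\partial B$ back along the other side of $x$ to $\ell_1$. The tube is chosen thinner than the positive distance from $x$ to $\hat x\cup\hat x'$ inside the compact set $\Sph^2\setminus(D\cup B)$.

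Three verifications would then remain: that $\ell$ is a simple loop based at $\infty$ avoiding $K$ (clear by construction, since the tube lies in $\Sph^2\setminus K$ and $\partial B$ avoids $K$); that $\ell$ is essential in $X_\infty$ (one side of $\ell$ in $\Sph^2\setminus\{\infty\}$ contains the Cantor point $p$, while the other contains the infinite set $K\setminus\overline B$); and that $\ell$ is homotopically disjoint from both $\hat x$ and $\hat x'$ (outside $D\cup B$ by thinness of the tube, inside $B$ by the choice of $B$, and inside $D$ because the radial arcs are pairwise disjoint). I expect the main technical point to be the last verification inside $D$: the crucial property is that $\ell_0$ and $\ell_1$ are inserted immediately adjacent to $x_0$ on $\partial D$, so that no endpoint of $\hat x$ or $\hat x'$ lies on the arc from $\ell_0$ to $\ell_1$ through $x_0$, ensuring that all the radial arcs in $D$ are automatically pairwise disjoint regardless of how the endpoints of $\hat x,\hat x'$ are distributed elsewhere around $\partial D$.
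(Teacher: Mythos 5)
Your proof is correct and takes essentially the same approach as the paper: both produce a middle vertex of $X_\infty$ --- a loop running along a thin tubular neighborhood of $x$ and encircling its Cantor-endpoint --- which is disjoint from both $\hat x$ and $f(x)$, and then conclude by the triangle inequality. The paper packages the construction more abstractly (it intersects two open disks $\mathcal{U}$ and $\mathcal{V}$ containing $x_\#-\{\infty\}$ and disjoint from $\hat x_\#-\{\infty\}$ and $f(x)_\#-\{\infty\}$ respectively, then takes a loop through $\infty$ in $(\mathcal{U}\cap\mathcal{V})-K$), but that loop is exactly your $\ell$, so the two arguments coincide.
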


\begin{proof}
We denote by $x_\#$ and $\hat x_\#$ the geodesic representatives of $x$ and $\hat x$, which are mutually disjoint (expect on $\{\infty\}$). Because $x_\#$ is disjoint from $\hat x_\#$, there exists an open topological disk $\mathcal{U}$ of $\Sph^2$ which contains $x_\#-\{ \infty\}$ and which is disjoint from $\hat x_\# -\{ \infty\}$. Similarly, because $f(x)_\#$ is disjoint from $x_\#$, there is an open topological disk $\mathcal{V}$ which contains $x_\#-\{ \infty\}$ and is disjoint from $f(x)_\# -\{ \infty\}$. Thus $\mathcal{U} \cap \mathcal{V}$ contains an open topological disk which contains $x_\#-\{ \infty\}$ and which is disjoint from $(\hat x_\# \cup f(x)_\#) -\{ \infty\}$. In particular, $\mathcal{U} \cap \mathcal{V}$ contains points of $K$, because it contains the Cantor-endpoint of $x_\#$. It follows that there exists a simple curve $\hat y_0 \subset (\mathcal{U} \cap \mathcal{V}) - K$ of $\Sph_2$ which contains $\infty$, and whose isotopy class is $\hat y \in X_\infty$. Finally, we have $d(\hat y,\hat x)=d(\hat y ,f(x))=1$, which completes the proof.
\end{proof}

\begin{lemma} \label{quasi-iso}
For all $x,y \in X_r$, we have: $$ d(f(x),f(y))\leq d(x,y)+4.$$
\end{lemma}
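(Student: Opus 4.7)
The plan is to construct, for each ray $\gamma \in X_r$, an auxiliary loop $\hat w_\gamma \in X_\infty$ homotopically disjoint from $\gamma$ (a ``tubing loop''), in such a way that the assignment $\gamma \mapsto \hat w_\gamma$ sends adjacent rays in $X_r$ to loops at distance at most $1$ in $X_\infty$. Combined with Lemma~\ref{lemme X_r X_infty}, which gives $d(f(\gamma), \hat w_\gamma) \leq 2$ for free, this will yield $d(f(x), f(y)) \leq 2 + d(x,y) + 2$ by the triangle inequality along a geodesic of $X_r$ from $x$ to $y$.

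To build $\hat w_\gamma$, let $a$ be a simple arc representative of $\gamma$ with Cantor endpoint $p_\gamma$. Choose a closed disk neighborhood $N \subset \Sph^2$ of $a$ with $N \cap K = \{p_\gamma\}$, such that $p_\gamma \in \operatorname{int}(N)$ and $\infty \in \partial N$. Then $\partial N$ is a simple closed curve in $\Sph^2 \setminus K$ passing through $\infty$; parametrizing it starting at $\infty$ gives a simple loop based at $\infty$. This loop is essential because it separates $\Sph^2$ into $\operatorname{int}(N)$, containing the Cantor point $p_\gamma$, and $\Sph^2 \setminus N$, containing the nonempty set $K \setminus \{p_\gamma\}$. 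It is disjoint from the interior of $a$, so its isotopy class $\hat w_\gamma$ is an element of $X_\infty$ homotopically disjoint from $\gamma$.

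For adjacent $\gamma, \delta \in X_r$, I claim $d(\hat w_\gamma, \hat w_\delta) \leq 1$ in $X_\infty$. Choose representatives $a, b$ of $\gamma, \delta$ with disjoint interiors, and take disk neighborhoods $N, N'$ of $a, b$ thin enough that they meet only at $\infty$; this is possible since $a \cap b = \{\infty\}$. Then $\partial N$ and $\partial N'$ share only the point $\infty$, so $\hat w_\gamma$ and $\hat w_\delta$ are homotopically disjoint in $X_\infty$. Now let $x = z_0, z_1, \ldots, z_n = y$ be a geodesic in $X_r$ with $n = d(x, y)$; the previous step provides a path $\hat w_{z_0}, \ldots, \hat w_{z_n}$ in $X_\infty$ of length at most $n$, and Lemma~\ref{lemme X_r X_infty} gives $d(f(x), \hat w_{z_0}) \leq 2$ and $d(f(y), \hat w_{z_n}) \leq 2$. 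The triangle inequality then yields the claimed bound $d(f(x), f(y)) \leq n + 4$.

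The main subtlety lies in the construction near $\infty$: the loops $\hat w_\gamma$ and $\hat w_\delta$ both pass through that single point, and one must check that thin regular neighborhoods of disjoint representatives of $\gamma, \delta$ can be arranged to meet only there. Since two disjoint simple arcs both with $\infty$ as an endpoint can (after an ambient isotopy fixing $K \cup \{\infty\}$) be assumed to arrive at $\infty$ along distinct tangent directions, the thin strips on either side of each arc can be taken disjoint away from $\infty$, which is all that is required.
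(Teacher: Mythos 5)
Your strategy is the same as the paper's---replace each ray along a geodesic of $X_r$ by a tubing loop in $X_\infty$, and close up at the two ends with Lemma \ref{lemme X_r X_infty}---but there is a genuine gap in the adjacency step, and it is not the subtlety at $\infty$ that you flag (that part is fine, and is the easy part). Two homotopically disjoint rays $\gamma,\delta$ may have the \emph{same} Cantor-endpoint $p$: think of two disjoint arcs reaching $p$ from opposite sides. Then $a\cap b=\{\infty,p\}$, not $\{\infty\}$, so the neighborhoods $N,N'$ cannot be made to meet only at $\infty$, as each contains $p$ in its interior. Worse, no choice of representatives repairs this: in this situation $\hat w_\gamma$ and $\hat w_\delta$ are in general \emph{not} homotopically disjoint. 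Concretely, put $K$ in the real line, let $p\in K$ have points of $K$ on both sides, and let $\gamma,\delta$ reach $p$ vertically from above and from below. If $\hat w_\gamma,\hat w_\delta$ had representatives disjoint away from $\infty$, the disks $D_\gamma,D_\delta$ they bound on the side of $p$ would have to be nested, say $D_\delta\subset D_\gamma$, and an ambient isotopy (which fixes $K$ pointwise, $K$ being totally disconnected) would place an arc isotopic to $\delta$ inside $D_\gamma$. But an auxiliary arc $\lambda$ joining a point of $K$ on the left of $p$ to one on its right through the upper half-plane, staying outside the small disk around $p$, has mod-$2$ intersection number $0$ with $\delta$ and $1$ with every arc from $\infty$ to $p$ contained in $D_\gamma$, because $\lambda$ crosses the tube of $D_\gamma$ in a single arc separating its $\infty$-end from its $p$-end; this count is an isotopy invariant since the endpoints involved are four distinct punctures. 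The reverse nesting is excluded symmetrically. So on such an edge your path in $X_\infty$ jumps by $2$, and your argument only yields $d(f(x),f(y))\le 2\,d(x,y)+4$.

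This shared-endpoint problem is exactly what the paper's proof spends its first paragraph on: it chooses the geodesic $(\gamma_i)$ so that all the $\gamma_i$ have pairwise distinct Cantor-endpoints, which is possible by moving the endpoints of the intermediate $\gamma_i$ to neighbouring points of $K$ without creating new intersections; and when $x$ and $y$ themselves share a Cantor-endpoint---so that the two ends of the geodesic cannot be perturbed---it replaces $\gamma_n$ by a ray disjoint from both $y$ and $f(y)$ with a different endpoint, absorbing the cost through Lemma \ref{lemme X_r X_infty}. Your proof needs this reduction (or an equivalent one) to be complete. A second, smaller error: the requirement $N\cap K=\{p_\gamma\}$ with $p_\gamma\in\operatorname{int}(N)$ is impossible to satisfy, since $K$ is a Cantor set, so $p_\gamma$ is not isolated and every neighborhood of it meets $K$ in infinitely many points. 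The usable condition, which the paper imposes, is $\partial N\cap K=\emptyset$, so that $N\cap K$ is a clopen (infinite) neighborhood of $p_\gamma$ in $K$; essentiality of the tubing loop still holds provided $N$ is thin enough that $K\not\subset N$, and with this correction your argument for rays with distinct Cantor-endpoints goes through.
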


\begin{proof}
Let $x,y \in X_r$ and $n=d(x,y)$.
If $x$ and $y$ do not have the same Cantor-endpoint, we choose a geodesic path $(\gamma_i)_{0\leq i \leq n}$ in $X_r$ between $x$ and $y$, and such that for all $i,j$, $\gamma_i$ and $\gamma_j$ do not have the same Cantor-endpoint. Such $\gamma_i$ exist, up to change some of the Cantor-endpoint for a neighbor point of $K$ without adding new intersections with the other $\gamma_k$'s. 

If $x$ and $y$ have the same Cantor-endpoint, we choose for $\gamma_n$ a ray which is homotopically disjoint from $y$ and from $f(y)$, and whose Cantor-endpoint is distinct from the Cantor-endpoint of $x$. We then choose a geodesic path $(\gamma_i)_{0\leq i \leq n}$ in $X_r$ between $x=\gamma_0$ and $\gamma_n$.

We now choose a small neighborhood $N_i$ of each Cantor-endpoint of $\gamma_i$, homeomorphic to a closed disk, which does not intersect any $\gamma_k$ for $k\neq i$, and such that the $N_i$'s are mutually disjoint. Moreover, we assume that the boundary of $N_i$ is disjoint from $K$ for all $i$.
If $y \neq \gamma_n$, we also choose some $\hat \gamma_n$ disjoint from $y$. We define for each $\gamma_i$ a curve $\hat \gamma_i$ as follow: we follow $\gamma_i$ until it crosses $N_i$, we follow the boundary of $N_i$, we follow $\gamma_i$ again to $\infty$. We get this way an element of $X_\infty$.

By construction, for all $i$ between $2$ and $n-1$, we have $d(\hat \gamma_{i-1},\hat \gamma_i)=d(\gamma_i,\gamma_{i+1})=1$.
According to Lemma \ref{lemme X_r X_infty} applied to $\hat \gamma_0$ disjoint from $x=\gamma_0$ and to $\hat \gamma_n$ disjoint from $y$, we get $d(\hat \gamma_0,f(x))\leq 2$ and $d(\hat \gamma_n,f(y))\leq 2$. Finally, we have $d(f(x),f(y))\leq n+4$.
\end{proof}

\paragraph{End of the proof of Proposition \ref{prop quasi-isom}:}
Lemmas \ref{retour} (for the first inequality) and \ref{quasi-iso} (for the second inequality) give us: $$d(x,y) -2 \leq d(f(x),f(y)) \leq d(x,y)+4.$$
Lemma \ref{lemme X_r X_infty} gives us a constant $C=2$ such that every $\hat x$ in $X_\infty$ is in a $C$-neighborhood of $f(X_r)$, which ends the proof of Proposition \ref{prop quasi-isom}. \qed

\subsubsection*{Gromov-hyperbolicity of the ray graph}

Finally, we have proved the following theorem:
\begin{theo}\label{theo-hyperbolique}
The ray graph is Gromov-hyperbolic.
\end{theo}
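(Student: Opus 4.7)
The plan is essentially to assemble the three ingredients that have been set up just before the statement. First, by Proposition \ref{G_infini hyp} the auxiliary loop graph $X_\infty$ is known to be $20$-hyperbolic; second, Proposition \ref{prop quasi-isom} establishes that the map $f : X_r \to X_\infty$ is a quasi-isometry (with explicit constants visible from Lemmas \ref{retour} and \ref{quasi-iso}, namely $d(x,y) - 2 \leq d(f(x),f(y)) \leq d(x,y) + 4$, and the $2$-coarse surjectivity coming from Lemma \ref{lemme X_r X_infty}); and third, the stability theorem quoted as ``Theorem 1.9 of Bridson--Haefliger'' says that any geodesic metric space admitting a quasi-isometric embedding into a Gromov-hyperbolic space is itself Gromov-hyperbolic.

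First I would note that $X_r$ is a connected graph with edges of length $1$, hence a geodesic metric space, so the hypotheses of the stability theorem are satisfied. Then I would invoke $f$ as a quasi-isometric embedding into the $20$-hyperbolic space $X_\infty$ and appeal directly to Bridson--Haefliger to conclude that $X_r$ is Gromov-hyperbolic. Strictly speaking, since $f$ is actually a quasi-isometry (not merely a quasi-isometric embedding), one could equivalently invoke the more standard fact that Gromov-hyperbolicity is a quasi-isometry invariant of geodesic metric spaces.

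There is really no obstacle remaining at this stage: all the hard work has already been done in establishing $X_\infty$'s hyperbolicity via the Hensel--Przytycki--Webb unicorn argument (Propositions \ref{prop4.2} and \ref{G_infini hyp}) and in controlling the distortion between $X_r$ and $X_\infty$ (Lemmas \ref{retour}, \ref{lemme X_r X_infty}, \ref{quasi-iso}). The proof at this point is a one-line citation, and the only thing to be careful about is stating clearly that we apply the cited theorem to the pair $(X', X) = (X_r, X_\infty)$ with the map $f$ from Section~\ref{def-q.i.}.
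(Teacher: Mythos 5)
Your proposal is correct and follows exactly the paper's own argument: cite Proposition \ref{prop quasi-isom} for the quasi-isometric embedding $f:X_r\to X_\infty$, Proposition \ref{G_infini hyp} for the hyperbolicity of $X_\infty$, and the Bridson--Haefliger stability theorem (Theorem \ref{bridson}) to transfer hyperbolicity back to $X_r$. Your additional remark that $X_r$ is geodesic (being a connected graph with unit-length edges) is a harmless and correct verification of the stability theorem's hypotheses.
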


\begin{proof}
This is a consequence of Proposition \ref{prop quasi-isom} (there exists a quasi-isometric embedding of $X_r$ in $X_\infty$), of Proposition \ref{G_infini hyp} ($X_\infty$ is Gromov-hyperbolic) and of Theorem \ref{bridson} (if $f:X\rightarrow X'$ is a quasi-isometric embedding and if $X'$ is Gromov-hyperbolic, then $X$ is Gromov-hyperbolic).
\end{proof}

\section{Non trivial quasimorphisms}\label{section-qm}

In \cite{Bestvina-Fujiwara}, Mladen Bestvina and Koji Fujiwara have proved that the space of classes of non trivial quasimorphisms on the mapping class group of a compact surface has infinite dimension. They first proved (Theorem $1$ of \cite{Bestvina-Fujiwara}) that if a group $G$ acts by isometries on a Gromov-hyperbolic space $X$, then, assuming the existence of some loxodromic elements satisfying some specific properties in $G$, the space of classes of non trivial quasimorphisms on $G$ has infinite dimension.
As a second step, they proved that if the action of $G$ on $X$ is \emph{weakly properly discontinuous} (WPD), then there exist some loxodromic elements satisfying the hypothesis of Theorem $1$. Finally, they proved that the action of the mapping class group of a compact surface on the curve complex is WPD. \\

An element $g$ of a group $G$ is said to act weakly properly discontinuously on a space $X$ if for all \mbox{$x\in X$,} for all $C >0$, there exists $N>0$ such that the number of $\sigma \in G$ satisfying $d(x,\sigma x) \leq C$ and $d(g^N x,\sigma g^N x)\leq C $ is finite (see for example Calegari \cite{SCL} p74).

\begin{claim} \label{rq-WPD} For all $g\in \Gamma= \MCG(\R^2 -\Cantor)$, the action of $g$ on the ray graph $X_r$ is not weakly properly discontinuous. \end{claim}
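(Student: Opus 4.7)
The plan is to exhibit, for every $g \in \Gamma$, a ray $x \in X_r$ and a constant $C$ (in fact $C=0$ suffices) so that for every $N>0$ there are infinitely many $\sigma \in \Gamma$ satisfying both $\sigma x = x$ and $\sigma g^N x = g^N x$ in $X_r$. This directly contradicts the WPD condition. The key intuition is that any two rays, together with $\infty$, fail to ``fill up'' the surface $\R^2 - K$: a cofinite amount of the Cantor set always lies far from them, and that remaining Cantor set supports an infinite-dimensional stabilizer acting in $\Gamma$.

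First I would fix an arbitrary ray $x \in X_r$, set $y := g^N x$, and choose simple representatives $a$ of $x$ and $b$ of $y$. The closure $\overline{a} \cup \overline{b}$ is a compact one-dimensional subset of $\Sph^2$ which meets $K$ at only two points (the Cantor endpoints). Consequently, $\Sph^2 - (\overline{a} \cup \overline{b})$ has finitely many connected components, and since $K$ is uncountable, at least one component $U$ must contain uncountably many points of $K$.

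Next, I would locate inside $U$ a Cantor subset $C' \subset K \cap U$ which is clopen in $K$ (this uses only the standard fact that any relatively open, uncountable subset of a Cantor set contains a clopen Cantor subset), and a simple closed curve $c \subset U \setminus K$ which separates $\Sph^2$ into two topological disks, each of which meets $C'$ in an infinite subset of $K$. Such a curve exists because one can split $C'$ into two clopen infinite pieces and then encircle one of them by a Jordan curve in $U \setminus K$. Because $c$ separates $K$ into two infinite parts, the Dehn twist $T_c \in \Gamma$ has infinite order, so the powers $\{T_c^n\}_{n\in\Z}$ form an infinite family of pairwise distinct elements of $\Gamma$.

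To finish, I would observe that since $c \subset U$ is disjoint from $a \cup b$, each $T_c^n$ may be realized by a homeomorphism supported in a small annular neighborhood of $c$ lying inside $U$; such a homeomorphism fixes $a \cup b$ pointwise. Therefore $T_c^n \cdot x = x$ and $T_c^n \cdot y = y$ in $X_r$, giving $d(x, T_c^n x) = d(g^N x, T_c^n g^N x) = 0$ for every $n\in\Z$ and every $N>0$. Taking $C=0$ we find infinitely many such $\sigma$, so the WPD condition fails for $g$. The only non-routine step is the third one, namely producing the separating Jordan curve $c$ in $U$ that carves off an infinite clopen Cantor piece of $K$; this is where the self-similar structure of the Cantor set is used, and once it is justified the remainder of the argument is formal.
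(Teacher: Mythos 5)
Your proposal is correct and follows essentially the same strategy as the paper: for any $g$, any $x$, and any $N$, both arguments produce infinitely many $\sigma \in \Gamma$ supported in a region of $\Sph^2$ disjoint from representatives of $x$ and of $g^N x$, so that $\sigma$ fixes both vertices of $X_r$ and the WPD condition fails. The paper is terser: it simply takes a neighborhood $\mathcal{U}$ of a point of $K$ disjoint from the two compact representative arcs and asserts that, since $\mathcal{U}$ contains infinitely many points of $K$, there are infinitely many such $\sigma$; your powers of a Dehn twist $T_c$ about a curve splitting off an infinite clopen piece of $K$ make that infinitude explicit, which fills in a detail the paper leaves implicit. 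One intermediate step of yours is wrong as stated: $\Sph^2 - (\overline{a} \cup \overline{b})$ need not have \emph{finitely} many components, because two rays with the same Cantor-endpoint can be forced to intersect infinitely many times (see the paper's remark following the definition of the ray graph). This does not break the argument: an open subset of $\Sph^2$ has at most countably many connected components, so the pigeonhole on the uncountable set $K - (\overline{a} \cup \overline{b})$ still yields a component $U$ containing uncountably many points of $K$. Alternatively, you can bypass components altogether, as the paper does, by picking any point of $K - (\overline{a} \cup \overline{b})$ and a small disk around it disjoint from the compact set $\overline{a} \cup \overline{b}$; that disk automatically contains infinitely many points of $K$ because $K$ has no isolated points.
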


Indeed, for all $x \in X_r$, for all $N\in \N$, there are infinitely many \mbox{$\sigma \in \Gamma$} such that $d(x,\sigma x) \leq 1$ and $d(g^N x,\sigma g^N x)\leq 1$: let $\mathcal{U}$ be a neighborhood of a point of the Cantor set such that $\mathcal{U}$  is disjoint from $x$ and from $g^N x$. Then every $\sigma$ supported in $\mathcal{U}$  fixes $x$ and $g^N x$, and thus satisfies $d(x,\sigma x) \leq 1$ and $d(g^N x,\sigma g^N x)\leq 1$. Moreover, there are infinitely many such $\sigma$, because there are infinitely many points of the Cantor set in $\mathcal{U}$. \qed

It follows that the strategy of \cite{Bestvina-Fujiwara} cannot be directly apply to our setting. However, we can find explicit elements of $\Gamma$ which satisfy the hypothesis of Theorem $1$ of \cite{Bestvina-Fujiwara}, which allow us to prove that the space of classes of non trivial quasimorphisms on $\Gamma$ has infinite dimension.\\

We first find an element $h\in \Gamma$ which acts by translation on the axis $(\alpha_k)_{k}$ previously defined (in Section \ref{section1}). We then prove, using a "number of positive intersections", that if $w$ is a sufficiently long segment of this axis, then for all $g\in \Gamma$, $g$ can't reverse this segment in a close neighborhood of the axis (Proposition \ref{copies}). Finally, we use this Proposition to, on the one hand, construct an explicit non trivial quasimorphism on $\Gamma$, and on the second hand, construct elements of $\Gamma$ that satisfy the hypothesis of Theorem $1$ of \cite{Bestvina-Fujiwara}.

\subsection{A loxodromic element of $\Gamma$}

We want to define a loxodromic element $h \in \Gamma$ as in Figure \ref{figu:h}, where each string sends a subset of the Cantor set to another one, in such a way so that $h$ maps $\alpha_k$ on $\alpha_{k+1}$ for all $k \in \N$ (see Figure \ref{figu:action-h}).

\begin{figure}[!h]
\labellist
\small\hair 2pt
\pinlabel $K_{-4}$ at 4 204
\pinlabel $K_{-3}$ at 56 204
\pinlabel $K_{-2}$ at 106 204
\pinlabel $K_{-1}$ at 156 204
\pinlabel $K_0$ at 264 204
\pinlabel $K_1$ at 312 204
\pinlabel $K_2$ at 362 204
\pinlabel $K_3$ at 413 204
\pinlabel $\infty$ at 211 204
\pinlabel $h$ at 465 110
\endlabellist
\centering
\vspace{0.4cm}
\includegraphics[scale=0.5]{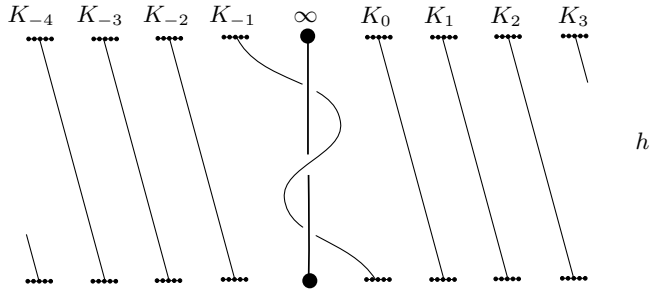}
\caption{Representation of $h \in \Gamma$.}
\label{figu:h}
\end{figure}

Because $(\alpha_k)_{k\in \N}$ is a geodesic half-axis (according to Proposition \ref{alpha_k geod}), we will have that $(h^n(\alpha_0))_{n\in \Z}$ is a geodesic axis of the ray graph, on which $h$ acts by translation.

\begin{figure}[!h]
\labellist
\small\hair 2pt
\pinlabel $K_{-1}$ at 155 317
\pinlabel $K_0$ at 264 317
\pinlabel $K_1$ at 314 317
\pinlabel $\infty$ at 214 317
\pinlabel $h$ at 451 256
\pinlabel $h$ at 451 121
\pinlabel $\alpha_0$ at 239 281
\pinlabel $\alpha_1$ at 309 154
\pinlabel $\alpha_2$ at 351 12
\endlabellist
\centering
\vspace{0.7cm}
\includegraphics[scale=0.5]{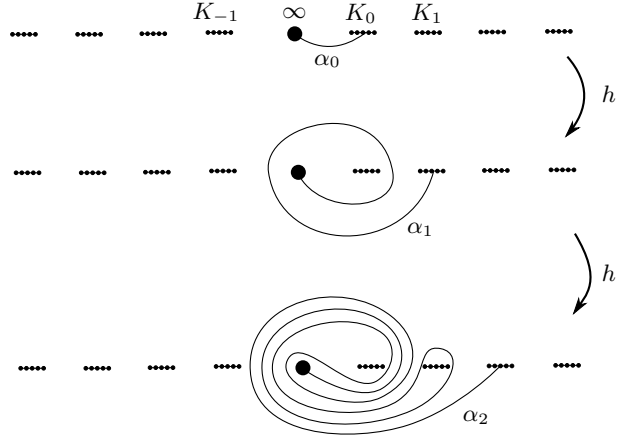}
\caption{Action of $h$ on the rays $\alpha_0$ and $\alpha_1$.}
\label{figu:action-h}
\end{figure}

\subsubsection*{Definition of $h$}

We fix an equator $\E$ and some alphabet of segments $(s_k)_{k\in \Z}$ as in Subsection \ref{partie codage}. For all $k \in \Z- \{0\}$, we denote by $K_k$ the points of $K$ between $s_{k-1}$ and $s_k$ on $\E$. In particular, the $K_k$'s are clopen subsets of the initial Cantor set $K$ for all $k$, thus they also are Cantor sets (any clopen subset of a Cantor set also is a Cantor set, according to the characterization of Cantor sets as totally disconnected compact sets without isolated point). We denote by $I$ a connected component of $\E - K$ such that $I \cup \{\infty\}$ splits the equator into two connected components, so that one of them contains all the segments $s_k$'s with $k>0$, and the other contains all the segments $s_k$'s with $k<0$.

\begin{figure}[!h]
\labellist
\small\hair 2pt
\pinlabel $\infty$ at 82 10
\pinlabel $\infty$ at 355 10
\pinlabel $I$ at 79 200
\pinlabel $I$ at 356 200
\pinlabel $\Cc_N$ at 25 190
\pinlabel $\Cc_S$ at 300 190
\pinlabel $K_0$ at 133 20
\pinlabel $K_0$ at 406 21
\pinlabel $K_{-1}$ at 25 27
\pinlabel $K_{-1}$ at 299 28
\pinlabel $K_1$ at 175 67
\pinlabel $K_1$ at 447 68
\endlabellist
\centering
\vspace{0.3cm}
\includegraphics[scale=0.6]{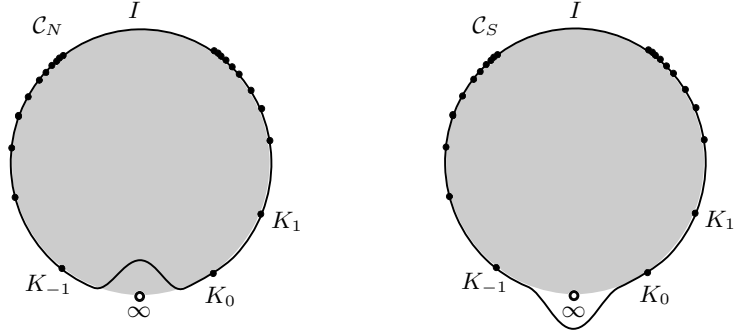}
\caption{$\Cc_N$ and $\Cc_S$ (the grey area is the northern hemisphere).}
\label{cercles}
\end{figure}

Let $\Cc_N$ be a topological circle which coincides with the equator $\E$ outside a neighborhood of infinity, and which goes through the northern hemisphere above infinity. Let $\Cc_S$ be a topological circle which coincides with the equator $\E$ outside a neighborhood of infinity, and which goes through the southern hemisphere below infinity (see Figure \ref{cercles}).

Let $\tilde t_1$ be a homeomorphism of $\Cc_N$ which maps the Cantor subset $K_k$ on $K_{k+1}$ for all $k\in \Z$, and which is the identity on $I$. We extend $\tilde t_1$ to a homeomorphism of the sphere $\Sph^2$ fixing infinity, and we consider its isotopy class $t_1 \in \Gamma$.

\vspace{0.3cm}
\begin{figure}[!h]
\labellist
\small\hair 2pt
\pinlabel $t_1$ at 456 145
\pinlabel $t_2$ at 456 88
\pinlabel $t_1$ at 456 31
\endlabellist
\centering
\includegraphics[scale=0.5]{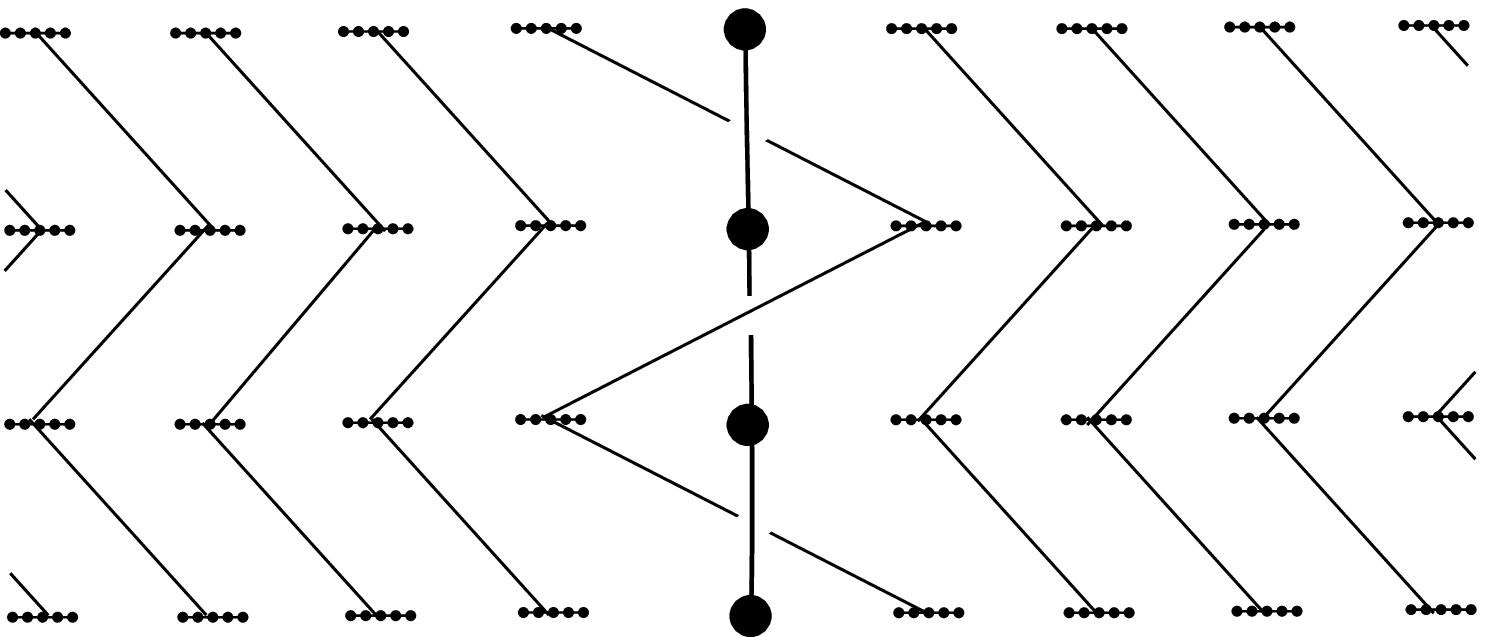}
\caption{Definition of $h:=t_1t_2t_1$.}
\label{figu:3h}
\end{figure}

Similarly, let $\tilde t_2$ be a homeomorphism of $\Cc_S$ which maps the Cantor subset $K_{k+1}$ on $K_{k}$ for all $k\in \Z$, and which is the identity on $I$. We extend $\tilde t_2$ to a homeomorphism of the sphere $\Sph^2$ fixing infinity, and we consider its isotopy class $t_2 \in \Gamma$.
In particular, if we denote by $\phi$ the isotopy class of the rotation of angle $\pi$ around $\infty$ and which maps for all $k\in \Z$ the Cantor subset $K_k$ on $K_{-k-1}$, then we can choose $t_2 = \phi t_1 \phi^{-1}$.

Finally, we set $h:=t_1 t_2 t_1$ (see Figure \ref{figu:3h}).

\subsubsection*{Action of $h$ on the ray graph}

Recall that if there exists a geodesic axis of $X_r$ which is preserved by some isometry $g$, and if $g$ has no fixed point on this axis, then the action of $g$ on $X_r$ is said to be \emph{loxodromic}, and this geodesic axis is called the axis of $g$.

\begin{theo}\label{theo-halphak}
The action of $h$ on the ray graph is loxodromic, with axis $(\alpha_k)_k$. More precisely, $h(\alpha_k)=\alpha_{k+1}$ for all $k\in \N$.
\end{theo}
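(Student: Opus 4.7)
The plan is to establish $h(\alpha_k) = \alpha_{k+1}$ for every $k \in \N$ by induction on $k$, and then to derive loxodromicity from this identity. Once the shift identity is in hand, set $\alpha_{-n} := h^{-n}(\alpha_0)$ for $n > 0$. Since $h$ is an isometry of $X_r$,
\[
d(\alpha_m, \alpha_n) = d(\alpha_0, \alpha_{n-m}) = |n - m|
\]
for all $m, n \in \Z$: Proposition \ref{alpha_k geod} handles the case $n - m \geq 0$, and the case $n - m < 0$ follows by applying $h^{m-n}$ once more. Hence $(\alpha_n)_{n \in \Z}$ is a bi-infinite geodesic of the ray graph, $h$-invariant, along which $h$ translates by one. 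Since consecutive $\alpha_k$'s are distinct, $h$ has no fixed point on it, so the action is loxodromic with axis $(\alpha_k)_k$.

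For the base case $h(\alpha_0) = \alpha_1$, I take $s_0$ as a representative of $\alpha_0$ and track its image through the three moves $t_1, t_2, t_1$ depicted in Figure \ref{figu:3h}. The first $t_1$ drags the Cantor-endpoint $p_0$ to a point of $K_2$ along $\Cc_N$, producing an arc that detours above $\infty$ in the northern hemisphere; the intermediate $t_2$ routes the resulting arc below $\infty$ through $\Cc_S$, accounting for the segment $s_{-1}$ of the final code; and the final $t_1$ produces the leading segment $s_1$. Reading off the segments crossed, in order, yields $s_1 s_{-1}(p_1) = \alpha_1$.

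For the inductive step, assuming $h(\alpha_k) = \alpha_{k+1}$, I apply $h$ to the recursive formula
\[
\alpha_{k+1} = \mathring \alpha_k \, s_{k+1} s_k \, \mathring \alpha_k^{-1} \, s_0 s_{-1} \, \mathring \alpha_k (p_{k+1}).
\]
The element $h$ is designed so that, at the level of the coding, it shifts $p_j \mapsto p_{j+1}$ and (up to isotopy) $s_j \mapsto s_{j+1}$ for $j \geq 1$, while preserving the pattern $s_0 s_{-1}$ near $\infty$. The three-factor structure $t_1 t_2 t_1$ is calibrated precisely for this: without the intermediate $t_2$, the "turn around infinity" move would be displaced, while $t_2$ supplies the south-side correction needed to keep the block $s_0 s_{-1}$ invariant. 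Combining these shifts with the inductive hypothesis $h(\mathring \alpha_k) = \mathring \alpha_{k+1}$ produces
\[
h(\alpha_{k+1}) = \mathring \alpha_{k+1} \, s_{k+2} s_{k+1} \, \mathring \alpha_{k+1}^{-1} \, s_0 s_{-1} \, \mathring \alpha_{k+1} (p_{k+2}) = \alpha_{k+2}.
\]

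The main obstacle is the geometric bookkeeping for the inductive step, namely verifying rigorously that $h$ shifts the equator segments by one and preserves the $s_0 s_{-1}$ block. A clean way to carry this out is to work with the geodesic representatives (Proposition \ref{prop 3.5}) and lift to the universal cover, where an isotopy class is determined by the endpoints at infinity of any lift (Proposition \ref{prop-hyp}); one then checks that the lifted endpoints of $h(\alpha_{k+1})$ match those of $\alpha_{k+2}$. Alternatively, one draws explicit tubular representatives as in Figure \ref{def-alpha} and visually tracks their images through the three moves $t_1, t_2, t_1$; the verification reduces to a finite braid-theoretic check, and the specific architecture of $h$ is exactly what makes this check work.
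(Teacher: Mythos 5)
Your reduction of loxodromicity to the shift identity $h(\alpha_k)=\alpha_{k+1}$ is correct and is the same as the paper's: given the identity, $(h^n(\alpha_0))_{n\in\Z}$ is an $h$-invariant bi-infinite geodesic by Proposition \ref{alpha_k geod} and the fact that $h$ acts by isometries, and $h$ translates along it without fixed points. The gap is in your proof of the identity itself, in the inductive step. You compute $h(\alpha_{k+1})$ by applying $h$ symbol-by-symbol to the code $\mathring\alpha_k\, s_{k+1}s_k\, \mathring\alpha_k^{-1}\, s_0s_{-1}\, \mathring\alpha_k(p_{k+1})$, using rules ``$s_j\mapsto s_{j+1}$ for $j\geq 1$'', ``the block $s_0s_{-1}$ is preserved'', and ``$h(\mathring\alpha_k)=\mathring\alpha_{k+1}$''. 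But no action of $h$ on codes is available: the code of a ray is read off from the intersections of its \emph{geodesic representative} with the \emph{fixed} equator $\E$, and $h$ does not preserve $\E$, so the code of $h(\beta)$ is not obtained by transforming the symbols of the code of $\beta$. Concretely, your rules are inconsistent with the known values: applied to $\alpha_1=s_1s_{-1}(p_1)$ they would output a two-symbol code, whereas $h(\alpha_1)=\alpha_2$ has the ten-symbol code $s_1s_{-1}s_2s_1s_{-1}s_1s_0s_{-1}s_1s_{-1}(p_2)$; indeed $\texttt{long}(\alpha_{k+1})=3\,\texttt{long}(\alpha_k)+2$, so no index-shifting substitution can describe what $h$ does. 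Your computation evades this only by also invoking ``$h(\mathring\alpha_k)=\mathring\alpha_{k+1}$'', but this is a substitution rule on code \emph{prefixes}, which are not rays; it is not a statement the inductive hypothesis $h(\alpha_k)=\alpha_{k+1}$ (an equality of isotopy classes) implies, and it is exactly the kind of ``action on the coding'' that would have to be constructed. Your last paragraph concedes that this bookkeeping is the main obstacle and proposes either matching endpoints of lifts in the universal cover or a ``finite braid-theoretic check'' on tubular representatives, but that is a plan for the proof rather than the proof: all of the content of the theorem lives precisely there.

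For comparison, the paper needs no induction and never manipulates codes under $h$. It represents each $\alpha_k$, uniformly in $k$, by a finite \emph{weighted graph} embedded in the sphere (Figure \ref{diag-alphak}), proves a reconstruction principle (duplicating each edge according to its weight and gluing in the unique way that produces a simple arc recovers the ray, so the graph determines the ray it carries), and then pushes this graph through explicit representatives of $t_1$, $t_2$, $t_1$ (Figure \ref{diag-h-alphak}); the image graph is visibly a graph carrying $\alpha_{k+1}$, which proves $h(\alpha_k)=\alpha_{k+1}$ for all $k$ at once. This weighted graph is the rigorous version of the ``tubular representative'' tracking you mention as an alternative: it is the device that turns ``follows $\alpha_k$ three times in a tubular neighborhood'' into a geometric object a homeomorphism can be applied to, with a uniqueness statement that lets one identify the image ray. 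To repair your argument you would need to prove a geometric statement of this type, at which point the induction (and the purported action of $h$ on codes) becomes unnecessary.
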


To see that $h(\alpha_k)=\alpha_{k+1}$ for all $k\geq 0$, we represent $\alpha_k$ with a graph as in Figure \ref{diag-alpha2}. 

\begin{proof}
\begin{figure}[!h]
\labellist
\small\hair 2pt
\pinlabel $3$ at 365 -5
\pinlabel $1$ at 405 0
\pinlabel $1$ at 415 60
\endlabellist
\centering
\includegraphics[scale=0.7]{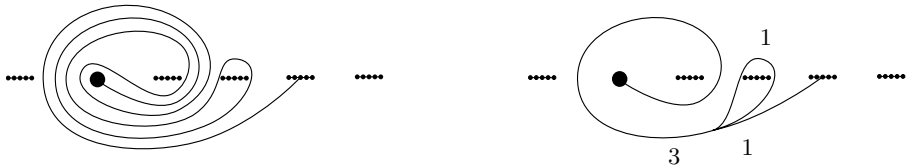}
\vspace{0.2cm}
\caption{On the left, the ray $\alpha_2$; on the right, a graph representing it.}
\label{diag-alpha2}
\end{figure}

For each ray, we can choose a curve representing it and identify some subsegments of the curve which stay close to each other. We get this way a finite graph smoothly embedded in $\Sph^2$ and disjoint from all points of $K$ except the Cantor-endpoint of the initial ray. At each vertex, the edges go in two directions. Each edge has a weight, which correspond to the number of subsegments that it carries: at each vertex, in one of the two directions there is only one edge, and its weight is the sum of the weights of the edges going in the opposite direction. We can deduce the initial ray from a graph representing it: indeed, it is enough to duplicate each edge the number of times corresponding to its weight, and then to glue the subsegments in the unique possible way. We only glue together subsegments which goes in opposite directions, and we want to draw a simple curve, thus there is a well defined way to glue the subsegments together.

\begin{figure}[!h]
\labellist
\small\hair 2pt
\pinlabel $K_k$ at 270 78
\pinlabel $1$ at 235 85
\pinlabel $3^{k-3}$ at 170 86
\pinlabel $3^{k-2}$ at 127 86
\pinlabel $1$ at 240 5
\pinlabel $3^{k-3}$ at 160 5
\pinlabel $3^{k-2}$ at 110 5
\pinlabel $3^{k-1}$ at 60 5
\endlabellist
\centering
\includegraphics[scale=0.55]{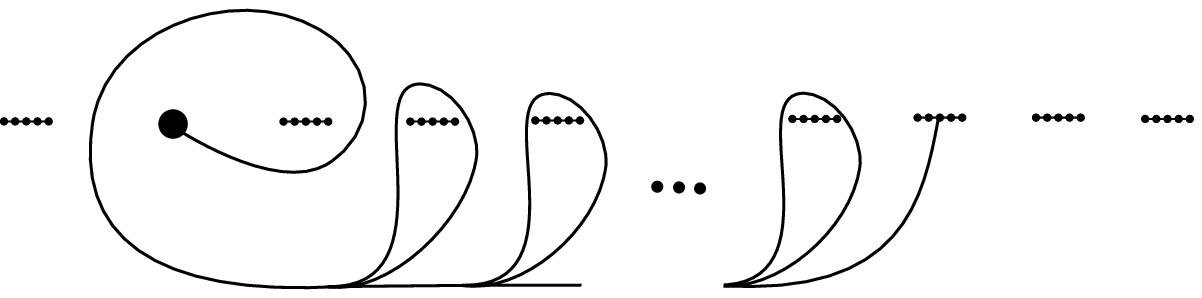}
\caption{Example of a graph representing the ray $\alpha_k$.}
\label{diag-alphak}
\end{figure}

On Figure \ref{diag-alphak}, we draw some specific graph representing $\alpha_k$, for every $k \geq 0$. As there exists a curve $a_k$ representing $\alpha_k$ which stays in a tubular neighborhood of this graph, if $h_0$ is a representative of $h$, we have that $h_0(a_k)$ stays in a tubular neighborhood of the image of the graph by $h_0$: the ray corresponding to the image of the graph is $h(\alpha_k)$.

\begin{figure}[!h]
\labellist
\small\hair 2pt
\pinlabel $t_1$ at 455 404
\pinlabel $t_2$ at 454 267
\pinlabel $t_1$ at 454 134

\pinlabel $\alpha_k$ at 424 456
\pinlabel $t_1(\alpha_k)$ at 429 326
\pinlabel $t_2t_1(\alpha_k)$ at 429 196
\pinlabel $h(\alpha_k)$ at 429 56

\pinlabel $1$ at 295 405
\pinlabel $3^{k-3}$ at 210 405
\pinlabel $3^{k-2}$ at 165 405
\pinlabel $3^{k-1}$ at 115 405
\pinlabel $1$ at 285 480
\pinlabel $3^{k-3}$ at 220 483
\pinlabel $3^{k-2}$ at 182 483

\pinlabel $1$ at 320 13
\pinlabel $3^{k-3}$ at 240 13
\pinlabel $3^{k-2}$ at 200 13
\pinlabel $3^{k-1}$ at 140 13
\pinlabel $1$ at 317 90
\pinlabel $3^{k-3}$ at 257 92
\pinlabel $3^{k-2}$ at 218 92

\pinlabel $K_k$ at 320 475
\pinlabel $K_{k+1}$ at 354 83
\endlabellist
\centering
\vspace{0.3cm}
\includegraphics[scale=0.52]{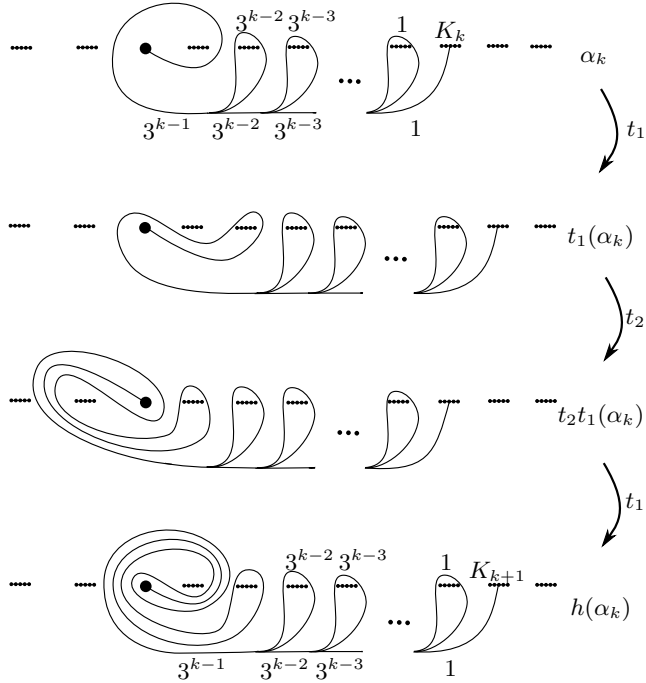}
\caption{Image of $\alpha_k$ by $h$.}
\label{diag-h-alphak}
\end{figure}

On Figure \ref{diag-h-alphak}, we draw a graph representing $\alpha_k$ and the successive images of this graph by the representatives of $t_1$, $t_2$ and $t_1$. Thus the final graph is the image of the graph of $\alpha_k$ by $h$: it represents $h(\alpha_k)$. Moreover, we see that the ray represented by this graph is $\alpha_{k+1}$: hence $h(\alpha_k)=\alpha_{k+1}$ for all $k \in \N$.
\end{proof}

\subsection{Number of positive intersections}

We still denote by $X_r$ the ray graph, and we orient each ray from infinity to its Cantor-endpoint.

\begin{figure}[!h]
\labellist
\small\hair 2pt
\pinlabel $\alpha$ at 150 50
\pinlabel $\beta$ at 200 3
\pinlabel $\alpha$ at 3 50
\pinlabel $\beta$ at 53 3
\endlabellist
\centering
\includegraphics[scale=0.7]{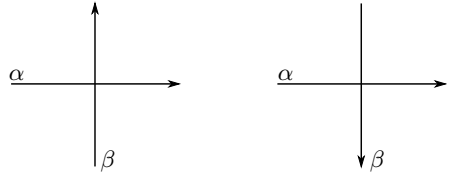}
%\vspace{0.2cm}
\caption{Positive intersection on the left, negative intersection on the right}
\label{intersections}
\end{figure}

\begin{definition}[Number of positive intersections] Let $I : X_r^2 \rightarrow \N \cup \{\infty\}$ be the map which sends any couple of oriented rays $(\alpha,\beta)$ to the number of positive intersections between two representatives of $\alpha$ and $\beta$ which are in minimal position (see Figure~\ref{intersections}).
\end{definition}

\begin{remark}
\begin{enumerate}
\item This number is well defined: it does not depend on the choice of representatives for $\alpha$ and $\beta$ (according to Proposition \ref{prop 3.5}).
\item We do not necessary have $I(\alpha,\beta) = I(\beta,\alpha)$.
\item For all $g \in \Gamma$, $I(g \cdot \alpha,g \cdot \beta)=I(\alpha,\beta)$ (because $\Gamma$ is the quotient of the group of homeomorphisms which preserve the orientation). \label{rq3}
\end{enumerate}
\end{remark}

\subsubsection*{Case of the sequence $(\alpha_k)_k$}

\begin{lemma} \label{lemme-I1}
Let $\beta$ and $\gamma$ be two elements of $X_r$ such that $A(\gamma)\leq A(\beta)-2$, where $A$ is the map defined at Section \ref{def de A}. Then $I(\gamma,\beta)\geq 1$.
\end{lemma}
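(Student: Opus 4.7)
My plan is to combine the geometric picture of Figure~\ref{prop-alpha} with a sign analysis, reusing the reasoning from the proof of Lemma~\ref{lemme distance}. Set $n := A(\beta)$, so that $\beta$ begins like $\mathring\alpha_n$; the hypothesis then says that $\gamma$ does not begin like $\mathring\alpha_{n-1}$.

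First I would normalize the configuration. As in the proof of Lemma~\ref{lemme distance}, I would apply an orientation-preserving homeomorphism of $\Sph^2$, isotopic to the identity relative to $K \cup \{\infty\}$ and preserving $\E$, that carries the initial segment of $\beta_\#$ (from $\infty$ through its first crossing of $s_{-1}$) onto the corresponding initial segment of $\alpha_{n,\#}$. Since any such homeomorphism preserves signs of intersections, after this replacement I may assume that $\beta_\#$ and $\alpha_{n,\#}$ share a common initial segment $\iota$, drawn as the plain arc in Figure~\ref{prop-alpha}.

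Next I would produce a crossing. The geometric argument recalled in the proof of Lemma~\ref{lemme distance} shows that the grey area of Figure~\ref{prop-alpha} contains no point of $K$, and that a simple arc starting at $\infty$ which does not cross $\iota$ can only reach $K$ by exiting along one of the two dotted arrows, hence must begin like $\mathring\alpha_{n-1}$. Since $\gamma$ does not begin like $\mathring\alpha_{n-1}$, the arc $\gamma_\#$ must cross $\iota$; let $\pi$ be the first intersection of $\gamma_\#$ with $\iota$ as $\gamma_\#$ is travelled from $\infty$.

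The decisive step is the sign analysis at $\pi$. The sub-arc of $\gamma_\#$ from $\infty$ to $\pi$ is simple and does not meet $\iota$, so it lies on one definite local side of $\iota$ up to $\pi$ and then crosses to the opposite side at $\pi$. With $\gamma$ and $\beta$ both oriented from $\infty$ toward their Cantor-endpoint, and with the orientation of $\Sph^2$ already used to orient the equator and hemispheres, a direct inspection of Figure~\ref{prop-alpha} against the convention of Figure~\ref{intersections} shows that the crossing at $\pi$ is positive, yielding $I(\gamma, \beta) \geq 1$. The main obstacle I expect is precisely this last sign check, in particular making sure the argument is robust whether $\gamma_\#$ enters the local picture from the southern side of $\iota$ (where it sits inside the grey area before $\pi$) or from the northern side (where the first crossing of $\iota$ brings it into the grey area); in both cases the topology of the configuration near $\iota$ forces the first crossing of $\iota$ to be of positive sign, so the count of positive intersections is at least one.
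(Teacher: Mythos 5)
Your overall strategy is the paper's: normalize so that $\beta_\#$ begins along a curve $\iota$ realizing $\mathring\alpha_n$, use the grey-area argument of Lemma \ref{lemme distance} to force $\gamma_\#$ to cross $\iota$, and then check a sign. You also correctly flag the sign check as the decisive step. But that is exactly where your argument breaks down. A transverse crossing of the oriented arc $\iota$ made from one local side and a crossing made at the same place from the opposite side have \emph{opposite} signs; so your concluding claim --- that the crossing at $\pi$ is positive both when $\gamma_\#$ sits in the grey area before $\pi$ and when the crossing at $\pi$ ``brings it into the grey area'' from the other side --- is geometrically impossible. The second case cannot be absorbed by ``inspection of the figure''; it has to be shown not to occur, since if it did occur it would produce a negative crossing.

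What makes the sign argument actually work is a structural fact your proof never isolates. Since $\iota$ realizes $\mathring\alpha_n=\mathring\alpha_{n-1}s_ns_{n-1}\mathring\alpha_{n-1}^{-1}s_0s_{-1}\mathring\alpha_{n-1}$, it runs three times parallel to $\alpha_{n-1\#}$, with one turn around $p_{n-1}$ and one around $\infty$. The grey area lies on \emph{both} sides of the first strand: crossing that strand only moves $\gamma_\#$ from one sheet of the tube to the other and does not let it escape, and such a crossing can a priori have either sign. By contrast, along every portion of $\iota$ that separates the grey area from its complement (the second and third strands and the two turns), the grey area sits on one consistent side of $\beta$, determined by $\beta$'s orientation. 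Hence the crossing to analyze is not ``the first intersection point of $\gamma_\#$ with $\iota$'' but the crossing at which $\gamma_\#$ first \emph{leaves} the grey area: such a crossing exists because every direction at $\infty$ points into the grey area and because exiting through $s_{-1}$ or terminating at $p_{n-1}$ is forbidden by the hypothesis $A(\gamma)\leq n-2$; and it is positive because it is made from the grey side of one of the separating portions. (If one insists on working with the genuinely first intersection, one must additionally rule out a first crossing of the first strand from the outer sheet --- for geodesic representatives this follows from a no-half-bigon-at-the-puncture argument, since such a crossing would bound a disk with initial subarcs of $\gamma_\#$ and $\beta_\#$ emanating from $\infty$ --- and this is precisely the missing step that your ``robustness'' sentence replaces with an impossible claim.)
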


\begin{figure}[!h]
\labellist
\small\hair 2pt
\pinlabel $\mathring \alpha_n$ at 5 50
\pinlabel $s_{-1}$ at 131 65
\pinlabel $s_{-1}$ at -10 245
\pinlabel $s_0$ at 127 245
\pinlabel $s_1$ at 127 210
\pinlabel $\mathring \gamma$ at 100 229
\pinlabel $\mathring \gamma$ at 20 229
\pinlabel ${p}_{n-1}$ at 70 10
\endlabellist
\centering
\vspace{-0.2cm}
\includegraphics[scale=0.5]{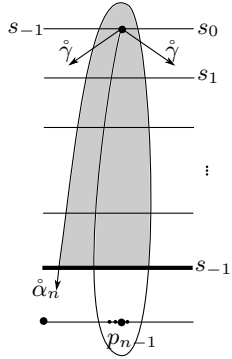}
\caption{Illustration of Lemma \ref{lemme-I1} (by definition of $(\alpha_k)_k$, there is no point of $K$ in the grey area).}
\label{figu:lemmeI1}
\end{figure}

\begin{proof}
Let $n:=A(\beta)$. Then $\gamma$ does not begin with $\mathring \alpha_{n-1}$. On Figure \ref{figu:lemmeI1}, we represent the beginning of $\beta$, i.e. $\mathring \alpha_n$. Every representative of $\gamma$ starts at infinity and has to go to its endpoint on the Cantor set: thus every representative of $\gamma$ has to leave the grey area. Because $\gamma$ does not begin with $\mathring \alpha_{n-1}$, $\gamma$ cannot leave the grey area by crossing $s_{-1}$. Thus $\gamma$ leaves the grey are by crossing $\beta$. The first intersection is positive, hence $I(\gamma,\beta)\geq 1$.
\end{proof}

\begin{remark} \begin{enumerate}
\item Because $\alpha_0$ and $\alpha_1$ are homotopically disjoint, we have: \\
$I(\alpha_0,\alpha_1)=I(\alpha_1,\alpha_0)=0$ ;
\item We won't use this result, but note that it is possible to compute precisely the number of intersections between $\alpha_0$ and $\alpha_k$ for all $k \geq 2$. We have:
$$I(\alpha_0,\alpha_k)={{3^{k-1}+2k-3}\over 4}~ ~ ~ ~\mathrm{and} ~ ~ ~ ~ I(\alpha_k,\alpha_0) = {{3^{k-1}-2k+1}\over 4}.$$
 Indeed, denote by $(p_k,n_k)=(I(\alpha_0,\alpha_k),I(\alpha_k,\alpha_0))$. We have: $$(p_{k+1},n_{k+1})=(2p_k+n_k+1,p_k+2n_k).$$ This comes from the construction of $(\alpha_k)_k$: we draw a tube around $\alpha_{k-1}$, and thus we can look at the orientation of the intersections of this tube and $\alpha_0$. We then know how to write $p_k$ and $n_k$ in terms of $k$.
\end{enumerate}
\end{remark}

\subsection{Non-reversibility of the axis $(\alpha_k)_k$}

We denote by $B$ the ceiling of the $(2,4)$-Morse constant of the ray graph (see Section \ref{def-q.i.}). We want to show some non-reversibility property for the axis $(\alpha_k)_k$ (Proposition \ref{copies}), which will be fundamental for our constructions of non trivial quasimorphisms (Proposition \ref{prop-qm} and Theorem \ref{dim infinie}). To prove that property, we first need to be able to compare the orientations of some segments.

\subsubsection*{Segments with the same orientation}

Let $X$ be a geodesic metric space. Let $\gamma_1=[p_1q_1]$ and $\gamma_2=[p_2q_2]$ be two geodesic segments of $X$ with the same length, and oriented from $p_i$ to $q_i$. Let $\gamma'_1$ be a geodesic segment (possibly of infinite length) which contains $\gamma_1$. Let $C>0$ such that $\gamma_2$ is included in the $C$-neighborhood of $\gamma'_1$, and such that $d(p_1,p_2)\leq C$. Moreover, we assume that $|\gamma_1|=|\gamma_2|\geq 3C$. 
In this setting, we say that $\gamma_1$ and $\gamma_2$ have \emph{the same orientation} if for all $r\in \gamma'_1$ such that $d(r,q_2)\leq C$, $r$ is on the same side of $p_1$ than $q_1$ on $\gamma'_1$. Because the length of $\gamma_1$ and $\gamma_2$ is greater than or equal to $3C$, we can check that the existence of only one $r$ satisfying this conditions is enough.

\begin{lemma}\label{géod-dist}
If $\gamma_1$ and $\gamma_2$ are segments satisfying the previous properties and which have the same orientation, then $d(q_1,q_2)\leq 3C$.
\end{lemma}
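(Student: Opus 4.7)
The plan is to exploit the fact that $\gamma'_1$ is a geodesic, so distances along it between points are just differences of arc-length parameters. The orientation hypothesis promises the existence of a point $r \in \gamma'_1$ with $d(r, q_2) \leq C$ which lies on the $q_1$-side of $p_1$. Parametrize $\gamma'_1$ by arc-length so that $p_1$ is at $0$ and $q_1$ is at $L := |\gamma_1| = |\gamma_2|$. Then $r$ sits at some parameter $t > 0$, and $d(p_1, r) = t$, $d(q_1, r) = |L - t|$.

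The heart of the argument is to show that $t$ is very close to $L$, namely $|L - t| \leq 2C$. For this, I would use two applications of the triangle inequality. On the one hand,
\[
t = d(p_1, r) \leq d(p_1, p_2) + d(p_2, q_2) + d(q_2, r) \leq C + L + C = L + 2C,
\]
and on the other hand,
\[
L = d(p_2, q_2) \leq d(p_2, p_1) + d(p_1, r) + d(r, q_2) \leq C + t + C,
\]
so $t \geq L - 2C$. Combining these gives $|L - t| \leq 2C$, which is the key estimate.

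Finally, since $\gamma'_1$ is a geodesic, $d(q_1, r) = |L - t| \leq 2C$, and the triangle inequality yields
\[
d(q_1, q_2) \leq d(q_1, r) + d(r, q_2) \leq 2C + C = 3C,
\]
which is the desired bound. The assumption $L \geq 3C$ is not used in this chain of inequalities themselves, but it is what makes the ``same orientation'' notion well defined (guaranteeing that such an $r$ is unambiguously on one side of $p_1$); no further subtlety arises, so the only place care is required is in setting up the two triangle-inequality estimates with the correct endpoints.
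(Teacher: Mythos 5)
Your proof is correct and is essentially the paper's own argument: your two triangle-inequality bounds on the parameter $t$ are exactly the paper's two cases (the case $r \in \gamma_1$ yields $L - t \leq 2C$, and the case $r \notin \gamma_1$ yields $t - L \leq 2C$), which your arc-length parametrization merges into the single estimate $|L - t| \leq 2C$ before the same final triangle inequality. The only difference is presentational — the paper splits into cases on the position of $r$ relative to $\gamma_1$, while you prove both bounds unconditionally — and your closing remark about the role of the hypothesis $|\gamma_1| \geq 3C$ matches the paper's own comment that it serves only to make the orientation condition checkable from a single point $r$.
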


\begin{proof}
Let $r$ on $\gamma'_1$ such that  $d(q_2,r)\leq C$. We denote by $\alpha$ the segment of $\gamma'_1$ between $p_1$ and $r$, and by $\beta$ the segment between $r$ and $q_1$. 

\textbullet\ \textbf{First case: if $r \in \gamma_1$.} We have: $$|\gamma_2|=|\gamma_1|=|\alpha|+|\beta|\leq d(p_1,p_2)+|\alpha|+C.$$
Hence: $$|\beta| \leq d(p_1,p_2)+C \leq 2C.$$
Finally, we get: $$d(q_1,q_2) \leq d(q_1,r)+d(r,q_2) \leq |\beta| +C \leq 3C.$$

\textbullet\ \textbf{Second case: if $r \notin \gamma_1$.} The segment $[p_1,r]\subset \gamma'_1$ contains $\gamma_1$ (because $\gamma_1$ and $\gamma_2$ have the same orientation, thus $r$ cannot be on the other side of $p_1$ on $\gamma'_1$). Hence: $$|\gamma_1|+|\beta| \leq d(p_1,p_2) + |\gamma_2| + d(q_2,r) \leq |\gamma_1|+2C.$$
Thus: $$|\beta| \leq 2C.$$
Finally, we get: $$d(q_1,q_2) \leq d(q_1,r)+d(r,q_2) \leq |\beta| + C \leq 3C.$$
\end{proof}

\subsubsection*{Non reversibility}

\begin{prop} [Non reversibility]\label{copies}
Let $B$ be the ceiling of the $(2,4)$-Morse constant of the ray graph, and let $w$ be a subsegment of the axis $l=(\alpha_k)_{k\in \Z}$ of length greater than $10B$. For all $g \in \MCG(\R^2-K)$, if $g \cdot w$ is included in the $B$-neighborhood of $l$, then it has the same orientation than $w$.
\end{prop}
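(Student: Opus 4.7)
The plan is to argue by contradiction, using the $\Gamma$-invariance of the asymmetric number of positive intersections $I$ together with the level function $A$ of Section~\ref{def de A}. Write $w = [\alpha_a, \alpha_b]$ with $a < b$ and $b - a > 10B$, and suppose for contradiction that $g \cdot w \subset N_B(l)$ with reversed orientation.

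First I would extract integers $m, m' \in \Z$ with $d(g \alpha_a, \alpha_m) \leq B$ and $d(g \alpha_b, \alpha_{m'}) \leq B$; applying Lemma~\ref{géod-dist} to $g\cdot w$ and a suitably chosen subsegment of $l$ (with $C = B$), the reversal hypothesis forces $m > m'$ with
\[
m - m' \;\geq\; (b - a) - 6B \;>\; 4B.
\]
After conjugating $g$ by an appropriate power of $h$ (which translates $l$ preserving its orientation, so does not change whether $g\cdot w$ is reversed), we may arrange that $m$ and $m'$ are both large positive integers, so that $A(\alpha_m) = m$ and $A(\alpha_{m'}) = m'$.

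Next I would use the $1$-Lipschitz property of $A$ (Corollary~\ref{dist}):
\[
A(g \alpha_a) \;\geq\; m - B,\qquad A(g \alpha_b) \;\leq\; m' + B,
\]
so that $A(g\alpha_a) - A(g\alpha_b) \geq (m - m') - 2B > 2$. Applying Lemma~\ref{lemme-I1} with $\beta = g\alpha_a$ and $\gamma = g\alpha_b$ gives $I(g\alpha_b, g\alpha_a) \geq 1$, and the $\Gamma$-invariance of $I$ promotes this to $I(\alpha_b, \alpha_a) \geq 1$.

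The main obstacle is that a single such ``wrong-direction'' intersection is, on its own, consistent with the explicit formulas of the previous subsection, so it does not yet give a contradiction. To finish, I would iterate the argument along $w$: for every pair of indices $a \leq k_1 < k_2 \leq b$ with $k_2 - k_1 > 6B$, the same reasoning (applied to the reversed images $g\alpha_{k_1}$ and $g\alpha_{k_2}$, whose closest axis indices still differ by at least $k_2 - k_1 - 2B > 4B$) yields $I(\alpha_{k_2}, \alpha_{k_1}) \geq 1$. Combining this whole family of wrong-direction intersections with the explicit tube construction of $(\alpha_k)_k$—in which each $\alpha_{k+1}$ wraps around $\alpha_k$ in a specified direction, so that the signs of the intersections of $\alpha_{k_2}$ with $\alpha_{k_1}$ are tightly constrained—should produce a configuration that is geometrically impossible for a single simple representative of $g \cdot w$ inside $N_B(l)$, delivering the desired contradiction.
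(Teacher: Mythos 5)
Your setup is sound and parallels the paper's own first steps: normalizing $g$ by a power of $h$, using Lemma \ref{géod-dist} to locate the images of the endpoints of $w$ on the axis, and converting this via Corollary \ref{dist} into the estimate $A(g\cdot\alpha_a)-A(g\cdot\alpha_b)>2$. But the route you propose for concluding cannot work, and the obstruction is structural rather than technical. Every consequence you extract from the reversal hypothesis has the form $I(\alpha_{k_2},\alpha_{k_1})\geq 1$ with $k_2-k_1>6B$. By $h$-invariance, $I(\alpha_{k_2},\alpha_{k_1})=I(\alpha_{k_2-k_1},\alpha_0)=\tfrac{3^{k_2-k_1-1}-2(k_2-k_1)+1}{4}$, which is already $\geq 1$ whenever $k_2-k_1\geq 3$. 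So all the statements your iteration produces are \emph{true} statements about the axis, valid with no hypothesis on $g$ whatsoever; no family of true statements, however large, can yield a contradiction. The final appeal to "a configuration that is geometrically impossible" is therefore not an argument but a restatement of the goal (and note that $g\cdot w$ is a path of vertices in $X_r$, not a single simple curve, so it is unclear what geometric configuration you would analyze).

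The missing idea is that the intersection-number obstruction is only nontrivial at scale $2$ along the axis, where the count genuinely vanishes: $I(\alpha_{i+2},\alpha_i)=I(\alpha_2,\alpha_0)=0$ for every $i$, even though $\alpha_i$ and $\alpha_{i+2}$ do intersect (all their intersections are negative). The paper's proof converts the \emph{global} reversal into a \emph{local} one precisely so as to reach such a pair: since the $1$-Lipschitz function $i\mapsto A(g\cdot\alpha_i)$ must descend by at least $(n-m)-4B$ as $i$ runs from $m$ to $n$ (this is your endpoint estimate), a pigeonhole argument (Lemma \ref{w1}) shows that if $|w|>8B+1$ there is some index $i$ with
$$A(g\cdot\alpha_{i+2})=A(g\cdot\alpha_i)-2$$
exactly; otherwise the drop per two steps is at most $1$ and the total descent is at most $\tfrac{n-m}{2}$, forcing $n-m\leq 8B$. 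For that close pair, Lemma \ref{lemme-I1} gives $I(g\cdot\alpha_{i+2},g\cdot\alpha_i)\geq 1$, contradicting the $\Gamma$-invariance of $I$ together with $I(\alpha_{i+2},\alpha_i)=0$ (Lemma \ref{w2}). In short: you applied Lemma \ref{lemme-I1} to pairs that are too far apart, where its conclusion carries no information; it must be applied to a distance-$2$ pair, and the pigeonhole step that produces such a pair is the heart of the proof you are missing.
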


\noindent  In other words, the subsegments of the axis $l$ whose length is greater than $10B$ are \emph{non reversible}: there is no copy of $w^{-1}$ with the same orientation than $w$ in the $B$-neighborhood of $l$.

\begin{remark} If an element $h'\in \Gamma$ is conjugated to $h^{-1}$ by a map {$\psi$}, let us denote by $l'$ the image of $l$ by $\psi$, equipped with the opposite orientation of $l$. This is an axis for $h'$.
According to the previous proposition, for all subsegment $w$ of the axis $l'$ of $h'$ whose length is greater than $10B$, and whose orientation is the same than the orientation of $l'$, for all $g \in \Gamma$, if $g \cdot w$ is included in the $B$-neighborhood of the axis $l$, then $g \cdot w$ has the opposite orientation than $l$.
\end{remark}

\paragraph{Proof of Proposition \ref{copies}.}
We prove the two following lemmas, which allow us to conclude:

\begin{lemma} \label{w1}
Let $m<n$ be two positive integers and let $w=(\alpha_m,\alpha_{m+1},...,\alpha_n)$ be a subsegment of $(\alpha_k)_{k\in \N}$. Let $g \in \MCG(\R^2-K)$ such that $d(\alpha_m,g \cdot \alpha_n)\leq B$, and such that $g \cdot w$ is in the $B$-neighborhood of $l$, with the opposite orientation than $w$. 
If $|w|>8B+1$, then there exists $m \leq i \leq n$ such that $A(g \cdot \alpha_{i+2})=A(g \cdot \alpha_i)-2$.
\end{lemma}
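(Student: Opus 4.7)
The plan is to track the function $A$ from Section \ref{def de A} along the image segment $g \cdot w$. Since $A$ is $1$-Lipschitz on $X_r$ (Corollary \ref{dist}) and $g$ acts by isometries, the values $a_i := A(g \cdot \alpha_i)$ change by at most $1$ between consecutive indices, so $|a_{i+2} - a_i| \leq 2$ for every $m \leq i \leq n - 2$. The conclusion of the lemma is that the extreme value $-2$ is attained; I will prove this by assuming the contrary and showing that the total variation $a_n - a_m$ then cannot be as negative as the geometry of a reversed segment forces it to be.

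The first step is to use Lemma \ref{géod-dist} to upgrade the one-sided endpoint bound $d(\alpha_m, g \cdot \alpha_n) \leq B$ into a two-sided one. I apply that lemma with $C = B$, taking $\gamma_1 = (\alpha_m,\ldots,\alpha_n)$ (a subsegment of $l$ with its natural orientation), $\gamma'_1 = l$, and $\gamma_2 = g \cdot w$ traversed from $g \cdot \alpha_n$ to $g \cdot \alpha_m$. The hypothesis that $g \cdot w$ lies in the $B$-neighborhood of $l$ with the opposite orientation from $w$ is precisely what makes $\gamma_1$ and $\gamma_2$ have the same orientation after this reversal, in the sense of the definition preceding Lemma \ref{géod-dist}. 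The length hypothesis $|w| = n - m > 8B + 1 \geq 3B$ is satisfied, and the endpoint condition $d(p_1, p_2) = d(\alpha_m, g \cdot \alpha_n) \leq B$ is given. The lemma then produces $d(\alpha_n, g \cdot \alpha_m) \leq 3B$. Applying Corollary \ref{dist} together with the identity $A(\alpha_k) = k$ gives
\[
a_n = A(g \cdot \alpha_n) \leq m + B \quad \text{and} \quad a_m = A(g \cdot \alpha_m) \geq n - 3B,
\]
so $a_n - a_m \leq -(n - m) + 4B$.

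Suppose now, for contradiction, that $a_{i+2} - a_i \neq -2$ for every $m \leq i \leq n - 2$. Since $|a_{i+2} - a_i| \leq 2$, this forces $a_{i+2} - a_i \geq -1$ for all such $i$. Telescoping along indices of a fixed parity (and using one extra step $a_{m+1} - a_m \geq -1$ if $n - m$ is odd) yields
\[
a_n - a_m \geq -\left\lceil (n - m)/2 \right\rceil.
\]
Comparing with the upper bound $-(n - m) + 4B$ gives $\lfloor (n - m)/2 \rfloor \leq 4B$, hence $n - m \leq 8B + 1$, contradicting the assumption $|w| > 8B + 1$.

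The main obstacle I anticipate is the bookkeeping in the first step: correctly interpreting the sentence ``$g \cdot w$ has the opposite orientation to $w$'' in terms of the convention preceding Lemma \ref{géod-dist}, and in particular choosing the right reversal so that the given endpoint proximity $d(\alpha_m, g \cdot \alpha_n) \leq B$ matches the hypothesis of Lemma \ref{géod-dist} rather than its conclusion. Everything after that is a routine Lipschitz-telescoping count.
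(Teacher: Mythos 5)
Your proof is correct and takes essentially the same route as the paper's: the same two endpoint estimates (Corollary \ref{dist} giving $A(g\cdot\alpha_n)\leq m+B$, and Lemma \ref{géod-dist} applied to the reversed segment $g\cdot w^{-1}$ giving $d(\alpha_n,g\cdot\alpha_m)\leq 3B$), followed by the same contradiction obtained by telescoping the inequality $A(g\cdot\alpha_{i+2})\geq A(g\cdot\alpha_i)-1$. Your treatment of the parity/ceiling bookkeeping in the telescoping step is in fact slightly more careful than the paper's, but the argument is identical in substance.
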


\begin{proof}
Because $d(\alpha_m,g \cdot \alpha_n)\leq B$, we have $A(g \cdot \alpha_n) \leq m+B$ (according to Corollary~\ref{dist}).\\
Because $g \cdot w^{-1}$ and $w$ have the same orientation and the same length, according to Lemma \ref{géod-dist}, we have:
$$d(\alpha_n,g \cdot \alpha_m)\leq 3B.$$
Hence $A(g \cdot \alpha_m) \geq n-3B$ (according to Corollary \ref{dist}).\\

Because $A$ is $1$-lipschitz (Lemma \ref{lemme distance}), $A(g \cdot w)$ is surjective on the set of integers between $m+B$ and $n-3B$. By contradiction, if we assume that for all $i$ between $m$ and $n$,
$A(g \cdot \alpha_{i+2}) \neq A(g \cdot \alpha_i)-2$, then for all $i$ we have: $$A(g \cdot \alpha_{i+2}) \geq A(g \cdot \alpha_i)-1.$$
By induction, we deduce:
$$A(g \cdot \alpha_n)\geq A(g \cdot \alpha_m)-{n-m\over 2}.$$
Since $A(g \cdot \alpha_m) \geq n-3B$ and $A(g \cdot \alpha_m) \geq n-3B$, we have:
$$m+B\geq n-3B-{n-m\over 2}.$$
Hence:
$$8B \geq n-m.$$
Because we have assumed that $|w| > 8B+1$, we are lead to a contradiction.\end{proof}

\begin{lemma} \label{w2}
For all $g \in \MCG(\R^2-Cantor)$ and for all $i \geq 0$, we have:
$$A(g \cdot \alpha_{i+2}) \neq A(g \cdot \alpha_i)-2.$$
\end{lemma}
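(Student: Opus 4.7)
The plan is to argue by contradiction and use the $\Gamma$-invariance of the positive intersection number together with the fact that $h$ shifts the axis $(\alpha_k)_k$ by one, to reduce the general claim to a single sign computation on the base pair $(\alpha_2,\alpha_0)$. The hard part will be the final local check of an intersection sign near $\infty$; the rest is essentially formal.

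First I would suppose, toward a contradiction, that $A(g \cdot \alpha_{i+2}) = A(g \cdot \alpha_i) - 2$ for some $g \in \Gamma$ and $i \geq 0$, and apply Lemma~\ref{lemme-I1} with $\gamma := g \cdot \alpha_{i+2}$ and $\beta := g \cdot \alpha_i$, obtaining
$$I(g \cdot \alpha_{i+2},\, g \cdot \alpha_i) \geq 1.$$
By the $\Gamma$-invariance of $I$ (Remark~\ref{rq3}), the left-hand side equals $I(\alpha_{i+2}, \alpha_i)$. Next, Theorem~\ref{theo-halphak} gives $h^i(\alpha_0) = \alpha_i$ and $h^i(\alpha_2) = \alpha_{i+2}$, so the same invariance yields $I(\alpha_{i+2}, \alpha_i) = I(\alpha_2, \alpha_0)$. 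The statement therefore reduces to showing $I(\alpha_2, \alpha_0) = 0$, after which the contradiction $1 \leq 0$ is immediate.

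For this base case, I would unfold the recursive definition of Section~\ref{section1} to the coding
$$\alpha_2 = s_1 s_{-1} s_2 s_1 s_{-1} s_1 s_0 s_{-1} s_1 s_{-1}(p_2).$$
Since $\alpha_0$ is precisely the segment $s_0$ and the letter $s_0$ appears exactly once in the word above, the geodesic representatives of $\alpha_2$ and $\alpha_0$ meet transversely at a single point, located during the sub-word $s_1 s_0 s_{-1}$ that encodes the counterclockwise turn of $\alpha_2$ around $\infty$. Drawing this turn in a neighborhood of $\infty$ and using the outward orientation of both rays, one checks from the convention of Figure~\ref{intersections} that the ordered tangent frame $(T_{\alpha_2}, T_{\alpha_0})$ at the crossing is \emph{negatively} oriented, so the unique intersection contributes to $I(\alpha_0,\alpha_2)$ but not to $I(\alpha_2,\alpha_0)$, giving $I(\alpha_2,\alpha_0) = 0$ as required.
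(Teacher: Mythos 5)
Your proof is correct and takes essentially the same route as the paper's: argue by contradiction via Lemma~\ref{lemme-I1}, use $\Gamma$-invariance of $I$ (with the reduction $I(\alpha_{i+2},\alpha_i)=I(h^i\alpha_2,h^i\alpha_0)=I(\alpha_2,\alpha_0)$, which the paper leaves implicit), and conclude from $I(\alpha_2,\alpha_0)=0$. The only difference is one of detail: you spell out the base-case computation via the coding of $\alpha_2$ and a sign check at the unique crossing of $s_0$, whereas the paper simply asserts $I(\alpha_2,\alpha_0)=0$ (consistent with its remark giving $I(\alpha_k,\alpha_0)=\frac{3^{k-1}-2k+1}{4}$).
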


\begin{proof}
For all $f \in \MCG(\R^2 -Cantor)$ and for all $\beta, \gamma \in X_r$, we have $I(f.\beta,f.\gamma)=I(\beta,\gamma)$. Hence: $$I(g \cdot \alpha_{i+2},g \cdot \alpha_{i})=I(\alpha_2,\alpha_0)=0.$$

\noindent By contradiction, if $A(g \cdot \alpha_{i+2}) = A(g \cdot \alpha_i)-2$, according to Lemma \ref{lemme-I1} we have: 
$$I(g \cdot \alpha_{i+2},g \cdot \alpha_{i}) \geq 1. $$
This ends the proof of Lemma \ref{w2}.
\end{proof}

The proof of Proposition \ref{copies} follows:

\begin{proof}
By contradiction: assume there exists a copy $w^{-1}$ as wanted, i.e. some $g \in \Gamma$ such that the segment $g \cdot w^{-1}=(g \cdot \alpha_n,...,g \cdot \alpha_{m+1},g \cdot \alpha_m)$ is in the $B$-neighborhood of the axis $l$, and has the same orientation than $w$. Up to compose $g$ by $h^k$ for some $k \in \Z$ if necessary, we can assume that $d(\alpha_m,g \cdot \alpha_n) \leq B$. Because $|w| > 8B+1$, Lemmas \ref{w1} and \ref{w2} give us the conclusion.
\end{proof}

\subsection{An explicit non trivial quasimorphism on~$\Gamma$}

Let us first recall the construction of Fujiwara \cite{Fujiwara} of quasimorphisms acting on Gromov-hyperbolic spaces. We fix some $p\in X_r$. Let $w$ and $\gamma$ be two paths in $X_r$. A \emph{copy of $w$} is a path of the form $g \cdot w$, with $g\in \Gamma$. We denote by $|\gamma|_w$ the maximal number of disjoint copies of $w$ on $\gamma$, and define: $$c_w(g):=d(p,g \cdot p)-\mathrm{inf}_\gamma (\mathrm{long}(\gamma)-|\gamma|_w),$$ where the infimum is taken over all the paths $\gamma$ between $p$ and $g \cdot p$. Because $X_r$ is hyperbolic, the map $q_w : \Gamma \rightarrow \R$ defined by $q_w:=c_w -c_{w^{-1}}$ is a quasimorphism on $\Gamma$ (Proposition $3.10$ of \cite{Fujiwara}). Moreover, the homogeneous quasimorphism $\tilde q_w$ defined by $\tilde q_w(g) = \lim_{n\rightarrow \infty} {q(g^n)\over n} $ does not depend on the choice  of $p \in X_r$.\\

Let us now prove the following Proposition, which will not be useful to prove that the space of classes of non trivial quasimorphisms has infinite dimension.

\begin{prop}\label{prop-qm}
Let $(\alpha_k)_{k\in \Z}$ be the geodesic axis in the ray graph previously defined. Let $w$ be a subsegment of this geodesic of length greater than $10B$, where $B$ is the ceiling of the $(2,4)$-Morse constant of the ray graph.
The quasimorphism $\tilde q_w$ given by Fujiwara's construction is non trivial.
\end{prop}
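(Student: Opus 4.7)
The plan is to evaluate $\tilde q_w$ on the loxodromic element $h$ of Theorem~\ref{theo-halphak} and show that $\tilde q_w(h)>0$, then deduce that $\tilde q_w$ is not a morphism. I fix the basepoint $p:=\alpha_0$; by Theorem~\ref{theo-halphak} and Proposition~\ref{alpha_k geod}, $h^n\cdot p=\alpha_n$ and $d(p,h^n p)=n$.

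\emph{Lower bound on $c_w(h^n)$.} The path $(\alpha_0,\alpha_1,\ldots,\alpha_n)$ is a geodesic realising $d(p,h^n p)$, and, viewing $w$ as a subsegment of the axis $l=(\alpha_k)_{k\in\Z}$, this geodesic contains $\lfloor n/|w|\rfloor$ pairwise disjoint translates of $w$ (namely the $h^{k|w|}$-translates for $k=0,\ldots,\lfloor n/|w|\rfloor -1$). Hence $|\gamma|-|\gamma|_w\leq n-\lfloor n/|w|\rfloor$ for this particular $\gamma$, and therefore $c_w(h^n)\geq \lfloor n/|w|\rfloor$.

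\emph{Upper bound on $c_{w^{-1}}(h^n)$.} I would show that there is a constant $C$, independent of $n$, with $c_{w^{-1}}(h^n)\leq C$. The heart of the argument is that any path $\gamma$ from $p$ to $h^n p$ approaching the infimum in the definition of $c_{w^{-1}}$ is necessarily a quasi-geodesic with constants depending only on $|w|$: adding an extra disjoint copy of $w^{-1}$ to a path forces at least $|w|$ units of extra length while gaining only $1$ in the count $|\gamma|_{w^{-1}}$, so optimal paths cannot stray far from geodesics. By Theorem~\ref{theo-hyperbolique} and the Morse Lemma, such a $\gamma$ lies in a $B'$-neighborhood of the axis $l$. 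Now any subsegment of $\gamma$ equal to $g\cdot w^{-1}$ inherits the forward orientation of $\gamma$ along $l$; but by Proposition~\ref{copies} (non-reversibility of $w$), no copy $g\cdot w$ with the reversed orientation can sit in a bounded neighborhood of $l$, which is the same as saying no forward copy of $w^{-1}$ can. Therefore $|\gamma|_{w^{-1}}$ is bounded by a constant depending only on $B'$ and $|w|$, giving $c_{w^{-1}}(h^n)\leq C$.

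Combining, $q_w(h^n)=c_w(h^n)-c_{w^{-1}}(h^n)\geq\lfloor n/|w|\rfloor-C$, so $\tilde q_w(h)=\lim_{n\to\infty}q_w(h^n)/n\geq 1/|w|>0$. To conclude that $\tilde q_w$ is not a morphism, and hence is non-trivial in the sense of the paper, I would either appeal to perfectness of $\Gamma$ (so that any non-zero homogeneous quasimorphism is automatically not a morphism) or, more concretely, exhibit a commutator $\xi\in[\Gamma,\Gamma]$ with $\tilde q_w(\xi)\neq 0$. Using the involution $\phi$ from the construction of $h$, a natural candidate is an element of the form $[\phi,h]$ or $h\cdot\phi h^{-1}\phi^{-1}$, whose behaviour under $\tilde q_w$ can be analysed by the same counting argument along its translation axis and along $l$.

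\emph{Main obstacle.} The crux is the upper bound on $c_{w^{-1}}(h^n)$: the infimum in the definition is taken over \emph{all} paths, not just geodesics, so one must first show that near-optimal paths are quasi-geodesics with uniformly controlled constants before Morse's lemma and Proposition~\ref{copies} can be invoked. The final non-triviality step is also delicate in the absence of a readily available perfectness result for $\Gamma$, requiring an explicit commutator computation whose details closely parallel the positivity argument for $h$ but on a different axis.
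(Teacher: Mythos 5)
Your core computation is essentially the paper's own argument: a lower bound on $c_w(h^n)$ by counting translates of $w$ along the geodesic axis, and an upper bound on $c_{w^{-1}}(h^n)$ by showing near-optimal paths are quasi-geodesics, hence (Morse Lemma) lie in the $B$-neighborhood of $l$, where Proposition~\ref{copies} forbids copies of $w^{-1}$. Two remarks on where you and the paper diverge. First, what you call the ``main obstacle'' is not one: the fact that paths realizing the infimum are $(2,4)$-quasi-geodesics is exactly Lemma $3.3$ of Fujiwara \cite{Fujiwara}, which the paper invokes directly --- this is precisely why $B$ is taken to be the $(2,4)$-Morse constant in the statement, and why the paper can assert $c_{w^{-1}}(h^{km})=0$ outright. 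Second, for the passage from $\tilde q_w\neq 0$ to ``not a morphism,'' the paper uses a slicker and more self-contained device than either of your suggestions: it writes $h=t_1t_2t_1$ and shows $\tilde q_w(t_1)=\tilde q_w(t_2)=0$, because $t_1^k(\alpha_0)$ is represented by a curve in the northern hemisphere for every $k$, so $d(\alpha_0,t_1^k\alpha_0)=1$ and both counting functions vanish on all powers of $t_1$ (and of $t_2$); thus $\tilde q_w(t_1t_2t_1)\neq \tilde q_w(t_1)+\tilde q_w(t_2)+\tilde q_w(t_1)$. That said, your first alternative (perfectness of $\Gamma$) is also valid and is not as delicate as you fear: the paper's remark following the proof derives perfectness from a lemma of Calegari (any $g$ moving some ray a distance $1$ is a product of at most two commutators) together with transitivity of the $\Gamma$-action on $X_r$, so no explicit commutator computation of the type you sketch with $[\phi,h]$ would be needed.
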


\begin{remark} Since we know the hyperbolicity constant of the graph $X_\infty$, we can deduce from it the hyperbolicity constant on the ray graph, and thus compute $B$: hence the segment $w$ can be explicitly chosen. \end{remark}

\begin{proof}
Since $\tilde q_w$ is homogeneous, it suffices to show that it is not a morphism in order to prove that it is non trivial. 
We first prove that $\tilde q_w(h)$ is not equal to zero, where $h=t_1t_2t_1$ is the loxodromic element of $\Gamma$ previously defined. We then show that $\tilde q_w(t_1)=\tilde q_w(t_2)=0$: thus $\tilde q_w(t_1t_2t_1) \neq \tilde q_w(t_1)+\tilde q_w(t_2)+\tilde q_w(t_1)$, i.e. $\tilde q_w$ is not a morphism.

The first affirmation is a consequence of Proposition \ref{copies}. This is the strategy described by Calegari in \cite{SCL}, page $74$: if we denote by $m$ the length of $w$ and if we choose $p =\alpha_0$, for all $k\in \N$ we have $c_w(h^{km})=k$ and $c_{w^{-1}}(h^{km})=0$. Indeed, the first inequality is clear, and for the second one, we use Lemma $3.3$ of \cite{Fujiwara}: the paths which realize the infimum are $(2,4)$-geodesics. Thus they stay in a $B$-neighborhood of the axis $(\alpha_k)_k$, according to the Morse Lemma (\ref{Morse}). Moreover, this neighborhood does not contain any copy of $w^{-1}$, according to Proposition \ref{copies} (see \cite{SCL}, Section $3.5$ for more details).
We have: $$\tilde q_w(h^m) := \lim_{k\rightarrow \infty} {c_w(h^{km}) - c_{w^{-1}}(h^{km})\over k} = 1.$$

Let us now prove that $\tilde q_w(t_1)=\tilde q_w(t_2)=0$. We choose $p=\alpha_0$. For all \mbox{$k \in \N$,} $t_1^k(\alpha_0)$ is the isotopy class of a curve included in the northern hemisphere. Hence \mbox{$d(p,t_1^k \cdot p)=1$.} We have $c_w(t_1^k)=c_{w^{-1}}(t_1^k)=0$, thus $\tilde q_w(t_1)=0$. Similarly, $\tilde q_w(t_2)=0$. Finally, we have proved that $\tilde q_w$ is a non trivial quasimorphism. \end{proof}

\begin{remark} To prove that $\tilde q_w$ is not a morphism, we can also prove that $\Gamma$ is a \emph{perfect group}, i.e. that every element of $\Gamma$ is a product of commutators. The only morphism from a perfect group to $\R$ is the trivial morphism. Since $\tilde q_w$ is not identically zero, it is not a morphism. 
\end{remark}

We deduce from a Lemma of Calegari in \cite{blog-Calegari} that $\Gamma$ is a perfect group. The Lemma claims that if $g \in \Gamma$ is such that there exists $x \in X_r$ so that $d(x,gx)=1$, then $g$ is a product of at most $2$ commutators.

Let $g\in \Gamma$ and $x\in X_r$. We consider a path in $X_r$ between $x$ and $gx$, and we denote it by $(x=x_0,x_1,...,x_n=gx)$. Since $\Gamma$ acts transitively on $X_r$, for all $1\leq i \leq n-1$ there exists $g_i \in \Gamma$ which maps $x_{i+1}$ on $x_i$: thus $g_i$ is the product of at most two commutators. It follows that $g_1...g_{n-1}g$ maps $x$ on $x_1$, with $d(x,x_1)=1$. Hence this element is also a product of at most two commutators. Finally, $g$ is a product of (at most $2n$) commutators.

\subsection{Dimension of the space of classes of non trivial quasimorphisms on~$\Gamma$}

\begin{theo}\label{dim infinie}
The space $\tilde{Q}(\Gamma)$ of classes of non trivial quasimorphisms on $\Gamma$ has infinite dimension.
\end{theo}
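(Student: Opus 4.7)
The plan is to apply the infinite-dimensionality criterion of Bestvina--Fujiwara from \cite{Bestvina-Fujiwara}: for a group $G$ acting by isometries on a Gromov-hyperbolic space $X$, producing an infinite family of loxodromic elements whose axes are sufficiently independent yields infinitely many linearly independent homogeneous counting quasimorphisms. In the standard setup the required independence is extracted from weak proper discontinuity of the action, which here fails (see the discussion opening Section~\ref{section-qm}); the role of WPD will instead be played by the non-reversibility statement of Proposition~\ref{copies} together with its analogue for translates of the axis $\ell = (\alpha_k)_{k \in \Z}$.

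The starting point is the loxodromic element $h$ of Theorem~\ref{theo-halphak} and a conjugate $h' := \psi h^{-1} \psi^{-1}$ of its inverse, where $\psi \in \Gamma$ is chosen so that the axis $\ell' = \psi(\ell)$ of $h'$ is far from $\ell$ in $X_r$ and oriented oppositely along their common direction. By the remark following Proposition~\ref{copies}, every subsegment of $\ell'$ of length greater than $10B$ which lands in the $B$-neighborhood of $\ell$ under some element of $\Gamma$ must carry the reversed orientation, and symmetrically for $\ell$ near $\ell'$; since the number of positive intersections is $\Gamma$-invariant, the same holds for arbitrary $\Gamma$-translates of the two axes. One then forms for each integer $n \geq 1$ the element $g_n := h^n h'$ and checks by a standard ping-pong argument on $X_r$ (based on the independence of $\ell$ and $\ell'$) that each $g_n$ is loxodromic, that the $g_n$ are pairwise non-conjugate in $\Gamma$, and that their axes satisfy the same kind of non-reversibility as $\ell$ itself.

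For each $n$, choose a subsegment $w_n$ of the axis of $g_n$ of length well above $10B$ and apply Fujiwara's counting construction to obtain a homogeneous quasimorphism $\tilde q_{w_n}$ on $\Gamma$. The reasoning of Proposition~\ref{prop-qm} already shows $\tilde q_{w_n}(g_n) \neq 0$; the geometric separation of the various axes, combined with the non-reversibility property, ensures that the axis of $g_m$ for $m \neq n$ carries no copy of $w_n$ or $w_n^{-1}$ in its $B$-neighborhood, whence $\tilde q_{w_n}(g_m) = 0$. Linear independence of $\{\tilde q_{w_n}\}_{n \geq 1}$ in $\tilde Q(\Gamma)$ follows immediately, proving the theorem. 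The main obstacle is this last separation property: without WPD one cannot quote the standard Bestvina--Fujiwara selection of words, so the geometry of the axes of the $g_n$ inside $X_r$ must be analysed by hand, using Proposition~\ref{copies} together with the hyperbolicity of $X_r$ and the Morse lemma, to rule out any cross-copies of $w_n$ on the axis of $g_m$.
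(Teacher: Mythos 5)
Your overall strategy points in the same direction as the paper's: start from $h$ and a conjugate of its inverse, and let the non-reversibility of Proposition~\ref{copies} play the role of WPD. But as written the proposal has two genuine gaps. First, you never establish the independence of the two axes. You write that $\psi$ ``is chosen so that the axis $\ell'=\psi(\ell)$ is far from $\ell$,'' but the existence of such a $\psi$ is precisely what must be proved: a priori the axis of a conjugate of $h^{-1}$ could fellow-travel $\ell$, and ``far'' has to mean the Bestvina--Fujiwara condition that any two half-axes of $\ell$ and $\ell'$ are at unbounded distance from each other. Verifying this is the bulk of the paper's proof: it takes $\psi=\phi$ to be the $\pi$-rotation about $\infty$, sets $h_2=\phi h^{-1}\phi^{-1}$, and uses the coding of Section~\ref{section1} together with Corollary~\ref{dist} and the symmetry $\sigma$ to show that for $n\geq 2$ the $(n-2)$-neighborhoods of $h_1^n(\alpha_0)$, $h_1^{-n}(\alpha_0)$, $h_2^n(\phi\alpha_0)$ and $h_2^{-n}(\phi\alpha_0)$ consist of rays beginning like $\mathring\alpha_2$, $\mathring\alpha_{-2}$, $\phi\mathring\alpha_{-2}$ and $\phi\mathring\alpha_2$ respectively, hence are pairwise disjoint. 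Without this (or some substitute), the hypotheses you need are simply not verified.

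Second, once you have $h$ and $h'$, you set out to reprove Bestvina--Fujiwara's Theorem~1 by hand: you form $g_n=h^n h'$, claim loxodromicity by ping-pong, and claim that the axis of $g_m$ carries no copy of $w_n^{\pm 1}$ in its $B$-neighborhood when $m\neq n$. This last claim --- which you yourself flag as ``the main obstacle'' and leave unproved --- is false for a generic choice of $w_n$: the axes of all the $g_n$ fellow-travel $\ell$ over arbitrarily long stretches (coming from the $h^n$ part), so a subsegment $w_n$ taken inside such a stretch does have copies in the $B$-neighborhood of the axis of $g_m$. Making this work requires exactly the delicate choice of words and lengths that constitutes the actual proof of Theorem~1 of Bestvina--Fujiwara. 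The paper sidesteps all of this by quoting that theorem as a black box: its hypotheses are precisely (i) independence of the axes (unbounded distance between half-axes) and (ii) $h_1\nsim h_2$, and the latter follows at once from Proposition~\ref{copies} and the remark after it, with $C=10B$. So your proposal leaves the two hardest steps unproved, and one of them is stated in a form that would fail.
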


\begin{proof}
We use Theorem $1$ of Bestvina \& Fujiwara \cite{Bestvina-Fujiwara}. Since $\Gamma$ acts by isometries on the ray graph which is hyperbolic, it suffices to find two loxodromic elements $h_1, h_2 \in \Gamma$ acting by translation on their axes $l_1$ and $l_2$, such that $l_1$ and $l_2$ have the orientation induced by this action, and which satisfy the two following properties (see \cite{Bestvina-Fujiwara}) :

\begin{enumerate}
\item "$h_1$ and $h_2$ are independent": the distance between any two half axes of $l_1$ and $l_2$ is unbounded.
\item "$h_1  \nsim h_2$": there exists a constant $C$ such that for every segment $w$ of $l_2$ whose length is greater than $C$, for every $g \in \Gamma$, either $g \cdot w$  goes out the $B$-neighborhood of $l_1$, or it has the opposite orientation.
\end{enumerate}

\begin{figure}[!h]
\centering
\def\svgwidth{0.86\textwidth}
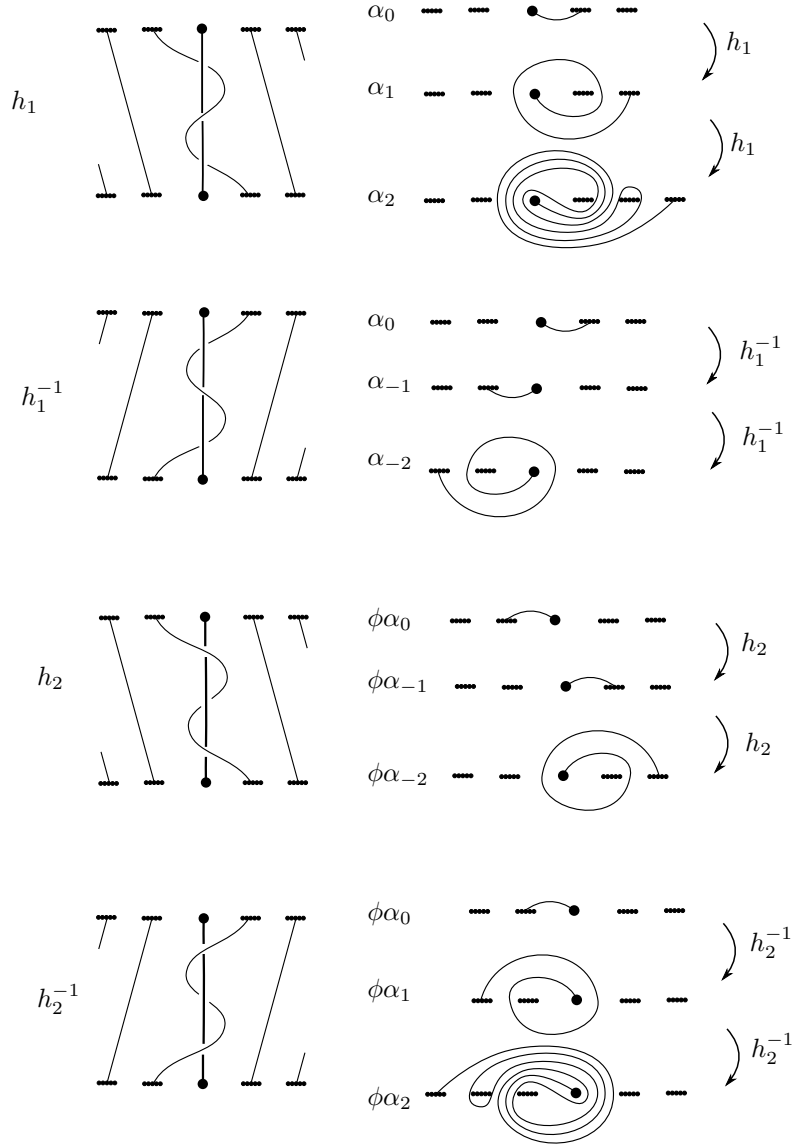
\caption{$h_1$, $h_2$, their inverses and their actions on some rays}
\label{h et compagnie}
\end{figure}

\noindent Let us find two such loxodromic elements. We denote by $h_1 \in \Gamma$ the element $h$ previously defined (which acts by translation on the axis $(\alpha_k)_k$). Let $\phi \in \Gamma$ be the isotopy class of the $\pi$-rotation around infinity. We assume that $K$ is symmetrically embedded around $\infty$, so that $\phi$ preserves $K$ and maps each Cantor subset $K_i$ on $K_{-i-1}$. 
Finally, let  $h_2:= \phi h_1^{-1} \phi^{-1}$. Then $h_1 \nsim h_2$ according to Proposition \ref{copies} and the Remark which follows it (the constant $C:=10B$ works, where $B$ is the ceiling of the Morse constant). Moreover, we will prove that $h_1$ and $h_2$ are independent, which will conclude the proof.

We proved in Corollary \ref{dist} that for every $n \geq 2$, every ray which is in the $(n-2)$-neighborhood of $h_1^n(\alpha_0)$ begins like $\mathring \alpha_2$. Similarly, every ray which is in the $(n-1)$-neighborhood of $h_1^{n-1}(\alpha_0)$ begins like $\mathring \alpha_1$. 

A similar behavior is true for $h_2$, $h_2^{-1}$ and $h_1^{-1}$. We denote by $\sigma$ the isotopy class of the axial symmetry along the equator. In particular, $\sigma$ is equal to its inverse, fixes the Cantor set, and is not an element of $\Gamma$ (because it does not preserve the orientation). Moreover, because $\phi$ is also equal to its inverse, we have:
$$h_2=\phi h_1^{-1} \phi^{-1} = \sigma h_1 \sigma^{-1}.$$
$$ h_2^{-1}= \phi h_1 \phi^{-1}.$$
$$ h_1^{-1}=\sigma \phi h_1 (\sigma \phi)^{-1}.$$

On the other hand, we have $\phi \alpha_{-1}=\sigma \alpha_0$ (see Figure \ref{h et compagnie}). Since $\alpha_n=h_1^n(\alpha_0)$, according to the third previous equality, we have for all $k\in \Z$:
$$\phi \alpha_{-k-1}=\sigma \alpha_k.$$

If we extend the "complete sequences" vocabulary defined in Section \ref{section1} to the rays which start in the northern hemisphere, for example by adding 'north' or 'south' as first term in the sequence of segments of the ray, we can code the $\phi \alpha_k$'s. Using Corollary \ref{dist} and the equality above it, we can deduce that (see Figure \ref{h et compagnie}):
\begin{itemize}
\item For all $n \geq 2$, every ray in the $(n-2)$-neighborhood of $h_2^n(\phi \alpha_0)=h_2^n(\sigma \alpha_{-1})=\sigma h_1^{n-1}(\alpha_0)$ begins like ${\phi \mathring \alpha_{-2}}=\mathring{\sigma \alpha_1}$;

\item For all $n \geq 2$, every ray in the $(n-2)$-neighborhood of $h_2^{-n}(\phi \alpha_0)=\phi(h_1^n\alpha_0)$ begins like $ {\phi \mathring \alpha_{2}}$;

\item For all $n \geq 2$, every ray in the $(n-2)$-neighborhood of $h_1^{-n}(\alpha_0)=\sigma \phi h_1^n \phi \sigma \alpha_0=\sigma \phi h_1^n \alpha_{-1}$ begins like $\mathring \alpha_{-2} = \sigma \phi \mathring \alpha_1$.
\end{itemize}

\begin{figure}[h]
\centering
\def\svgwidth{0.95\textwidth}
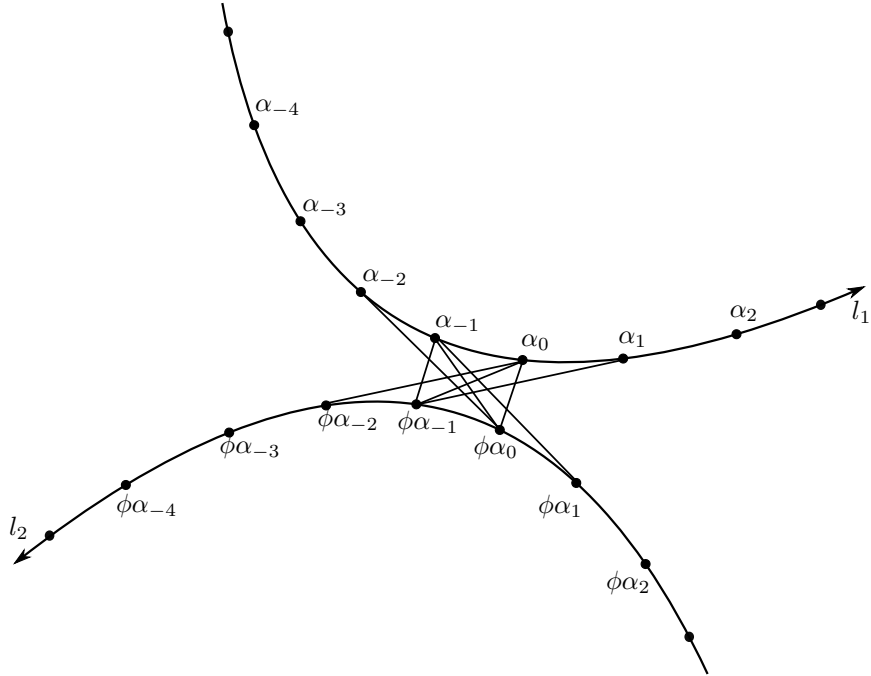
\caption{Axes of $h_1$ and $h_2$: this graph is isometrically embedded in $X_r$.}
\label{figu:axes}
\end{figure}

Finally, for all $n\geq 2$, all the elements in the $(n-2)$-neighborhood of $h_1^n(\alpha_0)$, $h_2^n(\phi \alpha_0)$, $h_2^{-n}(\phi \alpha_0)$ and $h_1^{-n}(\alpha_0)$ respectively, begins like $\mathring \alpha_2$, ${\phi \mathring \alpha_{-2}}$, ${\phi \mathring \alpha_{2}}$ and $\mathring \alpha_{-2}$ respectively. However, $\mathring \alpha_2$, ${\phi \mathring \alpha_{-2}}$, ${\phi \mathring \alpha_{2}}$ and $\mathring \alpha_{-2}$ do not have mutually disjoint representatives: these four neighborhoods are disjoint in $X_r$. Thus the axes $l_1$ and $l_2$ of $h_1$ and $h_2$ are such that the distance between two half-axes is unbounded.
\end{proof}

\begin{remark} More precisely, we have for all $|n|,|m| \geq 2$ (see Figure \ref{figu:axes}): $$d(h_2^n(\phi \alpha_0), h_1^m(\alpha_0))\geq |n|+|m|-1.$$ 
\end{remark}

\section{Example of a loxodromic element with vanishing scl}

Danny Calegari proved that the elements of $\Gamma$ which have a bounded orbit on the ray graph have vanishing $\scl$ (see \cite{blog-Calegari}). Let us show here that the converse is not true.

 \begin{prop}\label{ex-scl_nulle-hyp}
There exists $g\in \Gamma$ which has a loxodromic action on the ray graph and such that $\scl(g)=0$.
 \end{prop}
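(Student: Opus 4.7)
The plan is to produce $g \in \Gamma$ that is conjugate to its own inverse by an element of $\Gamma$ while still acting loxodromically on $X_r$. The first property forces every homogeneous quasimorphism to vanish on $g$, hence $\scl(g)=0$ by Bavard duality \cite{CB}.

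Concretely, let $h_1 = h = t_1 t_2 t_1$ be the loxodromic element of Theorem~\ref{theo-halphak}, and let $\phi$ be the involution used in the proof of Theorem~\ref{dim infinie} (the rotation of $\Sph^2$ around $\infty$ by $\pi$, under which $K$ is symmetric), so that $\phi^2 = 1$ and $h_2 = \phi h_1^{-1}\phi^{-1}$. For an integer $N \geq 1$ to be fixed later, define
\[
g := [h_1^N,\phi] = h_1^N\phi h_1^{-N}\phi^{-1} = h_1^N h_2^N.
\]
Then $g$ is a commutator (and in any case $\Gamma$ is perfect by the remark following Proposition~\ref{prop-qm}), and using $\phi^2 = 1$,
\[
\phi g\phi^{-1} = (\phi h_1^N \phi^{-1})(\phi h_2^N \phi^{-1}) = h_2^{-N} h_1^{-N} = g^{-1}.
\]
Consequently, for every homogeneous quasimorphism $q$ on $\Gamma$ we have $q(g) = q(\phi g\phi^{-1}) = q(g^{-1}) = -q(g)$, so $q(g) = 0$; Bavard duality then gives $\scl(g) = 0$.

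It remains to ensure that $g$ is loxodromic for $N$ large. The strategy is standard ping-pong applied to $h_1^N$ and $h_2^N$, using the axis configuration established at the end of the proof of Theorem~\ref{dim infinie}: since $X_r$ is Gromov-hyperbolic (Theorem~\ref{theo-hyperbolique}) and the four half-axes $l_1^{\pm}, l_2^{\pm}$ are pairwise at unbounded distance, their limit points are four distinct attracting/repelling fixed points of $h_1, h_2$ on the Gromov boundary $\partial X_r$. Classical ping-pong on a hyperbolic space (see for instance~\cite{Bridson-Haefliger}) then shows that for $N$ sufficiently large, $\langle h_1^N, h_2^N\rangle$ is a free group of rank two in which every non-trivial element is loxodromic, and in particular so is $g$.

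The main difficulty will be checking rigorously that the four half-axes $l_1^{\pm}, l_2^{\pm}$ really do produce four \emph{distinct} points in $\partial X_r$. I would derive this from the "begins like" descriptions already computed in the proof of Theorem~\ref{dim infinie}: each of the four sequences $(h_1^{\pm n}\alpha_0)_n$ and $(h_2^{\pm n}\phi\alpha_0)_n$ lies entirely in a pre-described neighborhood of rays beginning with a specific coding word (respectively $\mathring\alpha_2$, $\mathring\alpha_{-2}$, $\phi\mathring\alpha_{-2}$, $\phi\mathring\alpha_2$), and these four coding conditions are mutually exclusive. Combined with Corollary~\ref{dist} and the Morse lemma, this shows that the four sequences accumulate on four different Gromov boundary points, which is exactly what the ping-pong argument requires to conclude that $g = h_1^N h_2^N$ is loxodromic.
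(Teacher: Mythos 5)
Your proposal is correct, and its scl half is essentially the paper's own argument: the paper takes $g=h_2h_1$ and observes that conjugation by the involution $\phi$ carries $g$ to $g^{-1}$, exactly as you do for $g=h_1^Nh_2^N$. Your detour through homogeneous quasimorphisms and Bavard duality is a legitimate way of spelling out the paper's terse ``$g$ is conjugated to its inverse, and thus $\scl(g)=0$''; one can also argue directly that $\phi g\phi^{-1}=g^{-1}$ gives $g^{2n}=[g^n,\phi]$, hence $cl(g^{2n})\leq 1$ and $\scl(g)=0$. Where you genuinely diverge is the loxodromicity. The paper proves it by hand: it constructs a second explicit family of rays $(\gamma_k)_k$ --- built like $(\alpha_k)_k$ but turning alternately clockwise and counterclockwise around the Cantor-endpoints --- shows it is a geodesic half-axis by the same coding arguments as in Section~\ref{section1} (Lemma~\ref{lemme distance}, Corollary~\ref{dist}, Proposition~\ref{alpha_k geod}), and then verifies with the graph calculus of Theorem~\ref{theo-halphak} that $g(\gamma_{2n})=\gamma_{2n+2}$; this makes the specific element $h_2h_1$ loxodromic with an explicit invariant axis, no passage to powers needed. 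You instead invoke a Schottky-type ping-pong for $\langle h_1^N,h_2^N\rangle$ with $N$ large, leveraging the independence of $h_1$ and $h_2$ established at the end of the proof of Theorem~\ref{dim infinie}. That route can be made to work and buys economy (no new coding computations), but the step you treat as routine is the one needing the most care: the assertion that high powers of two independent loxodromics generate a free group \emph{all of whose nontrivial elements are loxodromic} is not in Bridson--Haefliger in the form you need. Since $X_r$ is not proper, boundary-convergence arguments must be replaced either by the local-to-global principle for quasi-geodesics (concatenations of long translated axis segments are local, hence global, quasi-geodesics), or by a citation to a reference proving the Schottky lemma for general isometric actions on hyperbolic spaces; this would have to be supplied. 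Conversely, the part you single out as the ``main difficulty'' --- that the four half-axes determine distinct limit points --- is exactly what the proof of Theorem~\ref{dim infinie} already provides, together with the quantitative bound $d(h_2^n(\phi\alpha_0),h_1^m(\alpha_0))\geq |n|+|m|-1$ for $|n|,|m|\geq 2$, so nothing new is needed there.
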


 \begin{proof} 
We consider again $h_1$ and $h_2$, the two loxodromic elements of $\Gamma$ described in the proof of Theorem \ref{dim infinie}: $h_1$ is the element $h$ previously defined, and $h_2=\phi h_1^{-1} \phi^{-1}$, where $\phi$ is the isotopy class of the rotation of angle $\pi$ around infinity. Let $g:=h_2h_1$ (see Figure \ref{homeo-g}). Then $g$ is conjugated to its inverse (because  $\phi=\phi^{-1}$), and thus $\scl(g)=0$. 

Let us show that $g$ has a loxodromic action on the ray graph. To this end, we construct a geodesic half-axis $(\gamma_k)_k$ in the ray graph, on which $g$ acts by translation (as we did before with $(\alpha_k)_k$ to prove that $h$ is loxodromic).

\begin{figure}[!h]
\labellist
\small\hair 2pt
\pinlabel $h_1$ at 235 130
\pinlabel $h_2$ at 235 40
\endlabellist
\centering
\vspace{0.3cm}
\includegraphics[scale=0.9]{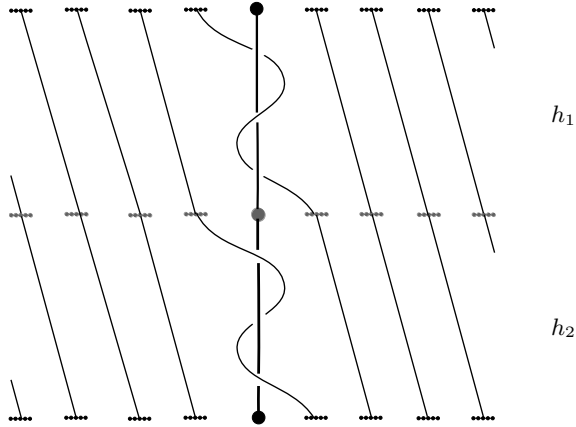}
\caption{The element $g:=h_2h_1$.}
%\vspace{0.3cm}
\label{homeo-g}
\end{figure}

\paragraph{Definition of $(\gamma_k)_k$.}
The sequence $(\gamma_k)_k$ is defined similarly to $(\alpha_k)_k$, except that:
\begin{itemize}
\item to define $\alpha_{k+1}$ from $\alpha_k$, we used to follow $\alpha_k$ and turn around its Cantor-endpoint \emph{counterclockwise};
\item to define $\gamma_{k+1}$ from $\gamma_k$, we will follow $\gamma_k$ and turn around its Cantor-endpoint alternately \emph{clockwise} and \emph{counterclockwise}  (see Figures \ref{gamma012} and~\ref{gammak}).
\end{itemize}

\begin{figure}[!h]
\labellist
\small\hair 2pt
\pinlabel $\gamma_{0}$ at 59 70
\pinlabel $\gamma_{1}$ at 303 45
\pinlabel $\gamma_{2}$ at 188 -10
\endlabellist
\centering
\vspace{0.3cm}
\includegraphics[scale=0.75]{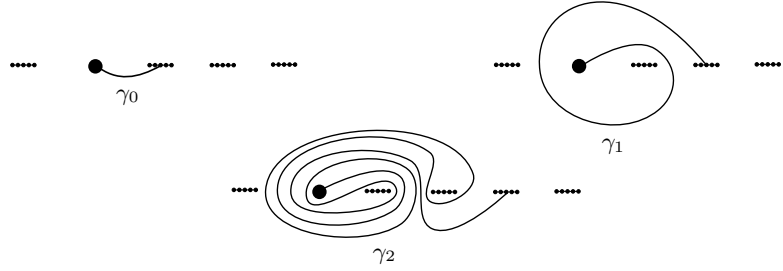}
\vspace{0.5cm}
\caption{Rays $\gamma_0$, $\gamma_1$ and $\gamma_2$.}
\label{gamma012}
\end{figure}

\vspace{0.3cm}
\noindent More precisely, we define the sequence $(\gamma_k)_{k\geq0}$ by induction as follow:
\begin{itemize}
\item  $\gamma_0:=\alpha_0$ is the isotopy class of the segment $s_0$ whose endpoints are $\infty$ and ${p}_0$.

\item For all odd $k\geq 1$ (\emph{clockwise turn}): to draw $\gamma_{k+1}$, we start at $\infty$, we follow $\gamma_k$ until its Cantor-endpoint ${p}_k$ (in a tubular neighborhood of $\gamma_k$), we turn clockwise around this point, crossing the two adjacent segments, \emph{first $s_{k+1}$, and then $s_{k}$}, we follow $\gamma_k$ again in a tubular neighborhood, we turn around infinity by crossing $s_{0}$ and then $s_{-1}$, we follow $\gamma_k$ for the last time in a tubular neighborhood until its Cantor-endpoint, and we go to the point ${p}_{k+1}$ without crossing the equator.

\item For all even $k\geq 1$ (\emph{counterclockwise turn}): to draw $\gamma_{k+1}$, we start at $\infty$, we follow $\gamma_k$ until its Cantor-endpoint ${p}_k$ (in a tubular neighborhood of $\gamma_k$), we turn counterclockwise around this point, crossing the two adjacent segments, \emph{first $s_{k}$, and then $s_{k+1}$}, we follow $\gamma_k$ again in a tubular neighborhood, we turn around infinity by crossing $s_{-1}$ and then $s_{0}$, we follow $\gamma_k$ for the last time in a tubular neighborhood until its Cantor-endpoint, and we go to the point ${p}_{k+1}$ without crossing the equator.

\end{itemize}

\begin{figure}[!h]
\labellist
\small\hair 2pt
\pinlabel $\gamma_{2n-1}$ at -15 100
\pinlabel $\gamma_{2n}$ at -10 20
\pinlabel $K_{2n-1}$ at 336 89
\pinlabel $K_{2n}$ at 371 25
\pinlabel $1$ at 299 83
\pinlabel $3$ at 256 77
\pinlabel $3^{2n-5}$ at 174 77
\pinlabel $3^{2n-4}$ at 132 85
\pinlabel $3^{2n-3}$ at 87 79

\pinlabel $3^{2n-2}$ at 30 75
\pinlabel $1$ at 301 123
\pinlabel $3$ at 253 113
\pinlabel $3^{2n-5}$ at 178 115
\pinlabel $3^{2n-4}$ at 134 124
\pinlabel $3^{2n-3}$ at 91 113
\pinlabel $3^{2n-1}$ at 30 -7
\pinlabel $1$ at 336 -4
\endlabellist
\centering
\vspace{0.4cm}
\includegraphics[scale=0.9]{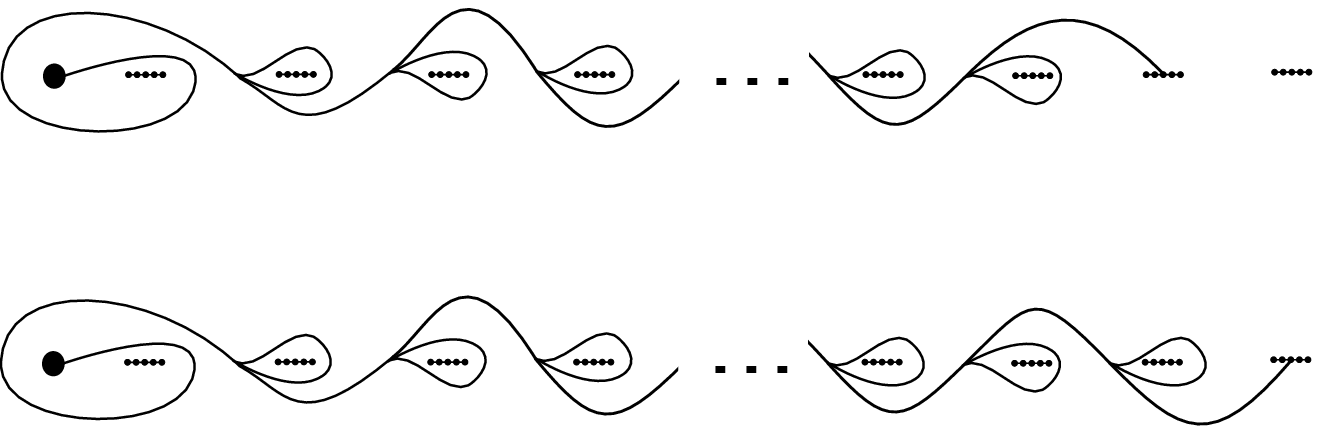}
\vspace{0.4cm}
\caption{Graphs representing $\gamma_k$ for $k$ odd (above) and $k$ even (below).}
\label{gammak}
\end{figure}

Given the similarities between the constructions of $(\alpha_k)_k$ and $(\gamma_k)_k$, we can use the same arguments than the ones used in Section \ref{section1}: in particular, Lemma \ref{lemme distance}, its Corollary \ref{dist} and Proposition \ref{alpha_k geod}. We deduce from these results than $(\gamma_k)_{k\in \N}$ is geodesic.

\paragraph{The element $g$ acts by translation on $(\gamma_k)_k$.} 
By induction, using the graphs representing the rays (as we did before to prove that $h$ is loxodromic), we can see that $g^n(\gamma_0)=\gamma_{2n}$ for all $n\in \N$: see Figure \ref{diag-g-gamma0} for $n=0$, Figure \ref{diag-g-gamma2} for $n=1$, and Figure \ref{diag-g-gammak} for the general case.
\end{proof}

\begin{figure}[!h]
\labellist
\small\hair 2pt
\pinlabel $\gamma_0$ at 101 349
\pinlabel $\gamma_2$ at 97 -9
\pinlabel $h_1$ at 235 328
\pinlabel $h_2$ at 278 154
\pinlabel $g$ at 391 249
\pinlabel $3$ at 397 -8
\pinlabel $1$ at 473 -5
\pinlabel $1$ at 473 39
\endlabellist
\centering
\vspace{0.3cm}
\includegraphics[scale=0.7]{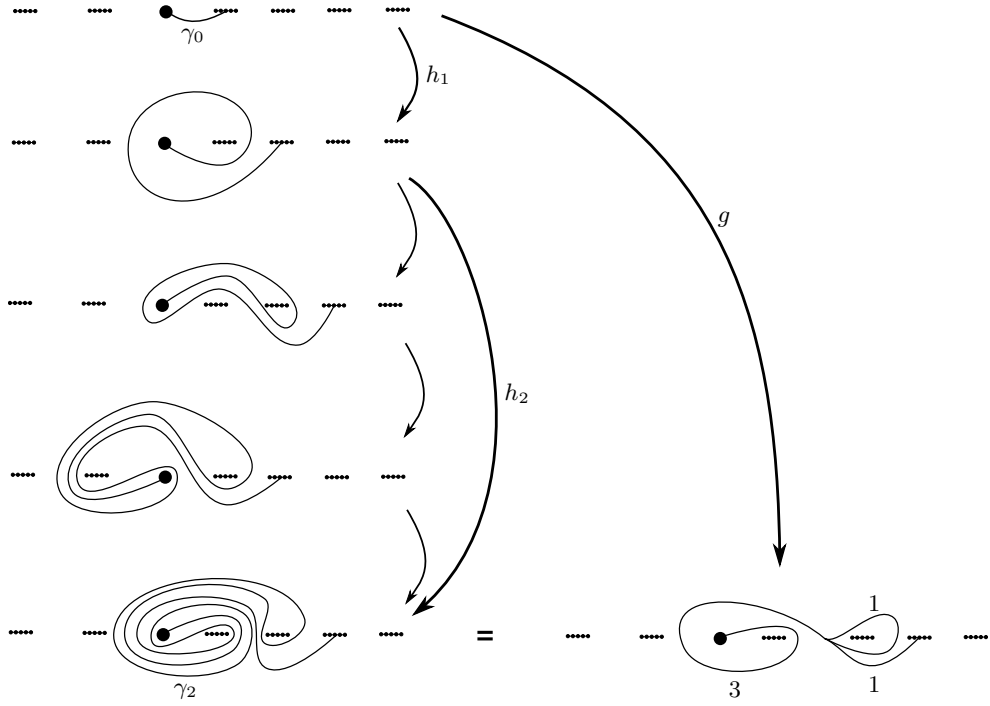}
\vspace{0.1cm}
\caption{Graph representing the image of $\gamma_0$ by $g$, where we decomposed the action of $h_2$ into three smaller parts, as we did before for $h$. The two graphs below represent the same ray: \mbox{$g(\gamma_0)=\gamma_2$.}}
\label{diag-g-gamma0}
\end{figure}

\begin{figure}[!h]
\labellist
\small\hair 2pt
\pinlabel $h_1$ at 385 122
\pinlabel $h_2$ at 385 45
\pinlabel $3$ at 91 128
\pinlabel $1$ at 178 141
\pinlabel $3^2$ at 89 61
\pinlabel $3$ at 180 92
\pinlabel $1$ at 228 78
\pinlabel $3^3$ at 99 -9
\pinlabel $3^2$ at 184 17
\pinlabel $3$ at 230 15
\pinlabel $1$ at 278 9

\pinlabel $\gamma_2$ at -10 152
\pinlabel $\gamma_4$ at -10 16
\endlabellist
\centering
\vspace{0.5cm}
\includegraphics[scale=0.9]{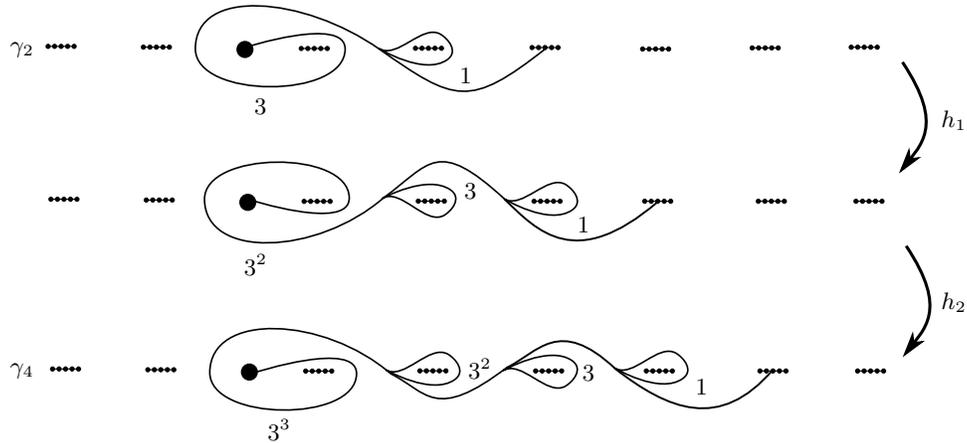}
\vspace{0.5cm}
\caption{Action of $g$ on $\gamma_2$.}
\label{diag-g-gamma2}
\end{figure}

\begin{figure}[!h]
\labellist
\small\hair 2pt
\pinlabel $\gamma_{2n}$ at 6 183
\pinlabel $\gamma_{2n+2}$ at 9 13
\pinlabel $3^{2n-1}$ at 58 169
\pinlabel $3^{2n}$ at 58 84
\pinlabel $3^{2n+1}$ at 58 -7
\pinlabel $1$ at 327 165
\pinlabel $1$ at 367 85
\pinlabel $1$ at 415 -9
\pinlabel $K_{2n}$ at 357 203
\pinlabel $K_{2n+1}$ at 398 118
\pinlabel $K_{2n+2}$ at 442 31
\pinlabel $h_1$ at 518 153
\pinlabel $h_2$ at 520 64
\pinlabel $3^{2n-2}$ at 127 174
\pinlabel $3^{2n-1}$ at 125 93
\pinlabel $3^{2n}$ at 127 -2
\pinlabel $3^{2n-2}$ at 178 85
\pinlabel $3^{2n-2}$ at 225 -5
\pinlabel $3^{2n-1}$ at 175 3

\endlabellist
\centering
\vspace{0.5cm}
\includegraphics[scale=0.7]{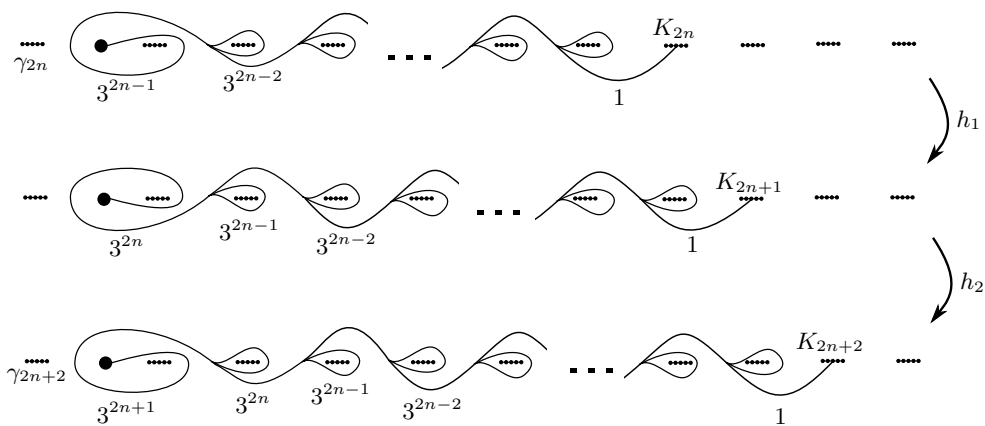}
\vspace{0.1cm}
\caption{Action of $g$ on $\gamma_{2n}$ : $g(\gamma_{2n}) = \gamma_{2n+2}$.}
\label{diag-g-gammak}
\end{figure}

\bibliographystyle{alpha}
\bibliography{ref}

\end{document}